\theoremstyle{plain}
\newtheorem*{prop}{Proposition}
\newtheorem{thm}{Theorem}
\newtheorem*{lem}{Lemma}
\newtheorem*{cor}{Corollary}
\theoremstyle{definition}
\newtheorem*{example}{Example}
\newtheorem*{defn}{Definition}
\newtheorem*{rem}{Remark}
\theoremstyle{remark}
\newcommand{\lie}[1]{\mathfrak{#1}}
\newcommand{\wh}[1]{\widehat{#1}}
\newcommand\bc{\mathbb C}
\newcommand\bz{\mathbb Z}
\newcommand\br{\mathbb R}
\def\a{\alpha}
\newenvironment{pf}{\proof}{\endproof}
\newcounter{cnt}
\def\mydggeometry{\makeatletter\dg@YGRID=1\dg@XGRID=20\unitlength=0.003pt\makeatother}
\makeatother \theoremstyle{remark}
\numberwithin{equation}{section}
\def\section{\def\@secnumfont{\mdseries}\@startsection{section}{1}%
  \z@{.7\linespacing\@plus\linespacing}{.5\linespacing}%
  {\normalfont\scshape\centering}}
\def\subsection{\def\@secnumfont{\bfseries}\@startsection{subsection}{2}%
  {\parindent}{.5\linespacing\@plus.7\linespacing}{-.5em}%
  {\normalfont\bfseries}}
\begin{document}

\title[]{Borel-de Siebenthal theory for affine reflection systems}
\author{Deniz Kus}
\address{University of Bochum, Faculty of Mathematics, Universit{\"a}tsstr. 150, 44801 Bochum, 
Germany}
\email{deniz.kus@rub.de}
\thanks{}

\author{R. Venkatesh}
\address{Department of Mathematics, Indian Institute of Science, Bangalore 560012}
\email{rvenkat@iisc.ac.in}
\thanks{}

\subjclass[2010]{}
\begin{abstract}
We develop a Borel--de Siebenthal theory for affine reflection systems by classifying their maximal closed subroot systems. Affine reflection systems (introduced by Loos and Neher) provide a unifying framework for root systems of finite-dimensional semi-simple Lie algebras, affine and toroidal Lie algebras, and extended affine Lie algebras. In the special case of nullity $k$ toroidal Lie algebras, we obtain a one-to-one correspondence between maximal closed subroot systems with full gradient and triples $(q,(b_i),H)$, where $q$ is a prime number, $(b_i)$ is a $n$-tuple of integers in the interval $[0,q-1]$ and $H$ is a $(k\times k)$ Hermite normal form matrix with determinant $q$. This generalizes the $k=1$  result of Dyer and Lehrer in the setting of affine Lie algebras.
\end{abstract}

\maketitle

\section{Introduction}
Extended affine Lie algebras (EALA for short) appeared first in the paper \cite{KB90} under the name \say{irreducible quasi-simple Lie algebras} in which the authors proposed a system of axioms for an interesting class of Lie algebras. A more modern and standard reference for EALA are the papers \cite{AABGP97} and \cite{Neher04}, where the name \say{extended affine Lie algebras} appeared for the first time. EALA share many properties with familiar Lie algebras, like finite-dimensional simple Lie algebras or affine Kac-Moody algebras. Their structure theory is well--understood and is quite similar to the structure theory of affine Lie algebras. In fact, in the EALA setup the aforementioned familiar Lie algebras appear exactly as the nullity zero respectively nullity one EALA, where the rank of the group generated by the imaginary (isotropic) roots is called nullity. Motivated by the deformation theory of simply elliptic singularities, the first examples beyond the trivial cases were studied in a series of papers by Saito \cite{Sai85, Sai90}.

The motivation of this paper is to study regular subalgebras of EALA with more emphasis on the combinatorial aspects. The classification of such subalgebras is a fundamental problem arising in many mathematical contexts and were studied in the finite-dimensional case intensively by Borel, de Siebenthal and Dynkin. A Lie subalgebra $\lie g_0$ of a semisimple complex Lie algebra $\mathfrak{g}$ is called regular if it is invariant under a Cartan subalgebra $\lie h$. The root system of $\lie g_0$ with respect to $\lie h$ can be regarded naturally as a closed subroot system of that of $\lie g$ and straightforward calculations show that the problem of classifying the regular subalgebras of $\mathfrak{g}$ reduces to that of classifying the (maximal) closed subroot systems. Therefore we are faced with a purely combinatorial problem. The closed subroot systems up to isomorphism were determined by Borel and de Siebenthal \cite{BdS} (the maximal ones) and by Dynkin \cite[Ch. 2, §5]{Dynkin}. 

Similar questions can be asked for affine Kac--Moody algebras (equivalently nullity 1 EALA), but the situation is slightly more complicated. Nevertheless, the authors of \cite{FRT08} determined the possible types of regular subalgebras of an affine Kac-Moody algebra; the list was incomplete and has now been completed in \cite{RV17}. A slightly different approach to this problem has been made by Dyer and Lehrer in a series of papers \cite{DL11, DL11a}, where they classified the reflection subgroups of finite and affine Weyl groups. Classifying reflection subgroups amounts to classifying (real) subroot systems, but it is well-known (in the finite case) that the classification of all subroot systems may be deduced from that of the closed subroot systems \cite{Carter}. 

In this paper we ask the natural question for possible regular subalgebras of an extended affine Lie algebra; see \cite{CNPY16} for the conjugacy of Cartan subalgebras of EALA. From the combinatorial point of view the question can be formulated as follows. What are the maximal closed subroot systems of the root system of an EALA? It turns out that extended affine root systems (the roots of an EALA) form a special class of affine reflection systems, whose theory has been developed in \cite{LN11}. They provide a unifying framework for finite root systems, extended affine root systems and various generalizations thereof. In this paper, we answer the more general question and develop a Borel-de Siebenthal theory for affine reflection systems by classifying their maximal closed subroot systems explicitly (not only the type). In a sequel to this work \cite{BKV18}, we apply our results to obtain a complete list of regular subalgebras of EALA of nullity 2 listed in \cite{ABP14}. Let us first discuss the difficulties passing beyond nullity 1. 

The important fact which was used by Dyer and Lehrer to characterize subroot systems of affine root systems is that any pointed reflection subspace $A\subseteq \mathbb{Z}$ (i.e. $A-2A\subseteq A$ and $0\in A$) is a subgroup of $\mathbb{Z}$ (see \cite[Lemma 22]{DL11}) and hence has the form $r\bz$.
Of course, this fact is no longer true for arbitrary abelian groups and not even for $\mathbb{Z}\oplus \mathbb{Z}$. The latter abelian group appears for example as the group generated by the imaginary roots of a nullity 2 untwisted toroidal Lie algebra. Affine reflection systems are constructed from a finite root system $\mathring{\Phi}$ and an extension datum, where an extension datum is a collection of sets $\Lambda_{\alpha},\  \alpha\in \mathring{\Phi}\cup\{0\}$ satisfying certain conditions (see Section \ref{section2} for a precise definition). Since we allow arbitrary sets, it is more difficult to deal with affine reflection systems. Nevertheless, the additional properties give us at least some restrictions and relations among the sets $\Lambda_\a$; see Lemma \ref{inclusionen} for a complete list of constraints on $\Lambda_\a$. 

We will now discuss our results in the special case of an untwisted toroidal Lie algebra; all details can be found in Section~\ref{section8}. Toroidal Lie algebras are $k$--variable generalizations of affine Kac--Moody algebras. Their root systems have the form $\Phi=\{\alpha\oplus \mathbb{Z}^k : \alpha\in \mathring{\Phi}\},$ where $\mathring{\Phi}$ is a finite root system and we will assume for simplicity that $\mathring{\Phi}$ is irreducible, reduced and of rank $n\geq 2$. Let $\Psi\subseteq \Phi$ a maximal closed subroot system. Our main result in this case implies the following two cases. Either there is a maximal closed subroot system $\mathring{\Phi}'$ in $\mathring{\Phi}$ à la Borel-de Siebenthal such that
$\Psi=\{\alpha \oplus \bz^k: \alpha\in\mathring{\Phi}'\}$
or there exists a prime number $q$ and a Hermite normal form matrix $U^{\top}=(u_1,\dots,u_k)$ with $\text{det}(U)=q$ and a tuple of integers $(b_{\a_i})_{}\in [0,q-1]^{n}$ (for each simple root) such that
$$\Psi=\{\alpha \oplus b_{\a} e_\ell+\bz u_1+\cdots +\bz u_k: \alpha\in\mathring{\Phi}\},$$
where $\ell$ is the unique integer such that $u_\ell=q e_\ell$ ($e_\ell$ is the $\ell$--th unit vector). Therefore, in this particular case, the maximal closed subroot systems (with full gradient) are in one-to-one correspondence with triples $(q,(b_{\a_i})_{},U)$, where $q$ is a prime number, $(b_{\a_i})_{}\in [0,q-1]^{n}$ and $U$ is a Hermite normal form matrix with determinant $q$. This generalizes the $k=1$ result of Dyer and Lehrer \cite{DL11a}; see also \cite{RV17}. The twisted case is quite more challenging and we discuss an example of Saito's root system of a twisted toroidal Lie algebra in Section~\ref{section8}.

\textit{Organization of the paper:} In Section~\ref{section2} we introduce affine reflection systems and establish their basic properties. Furthermore, we discuss several examples. In Section~\ref{section3} we define subroot systems of affine reflection systems and prove several properties of their gradient. The classification of the maximal closed subroot systems is worked out in Section~\ref{section4}--\ref{section7} and in Section~\ref{section8} we apply our results to the setting of toroidal Lie algebras. 

\textit{Organization of the classification:} Let $\Phi$ an irreducible affine reflection system and $\Psi$ a maximal closed subroot system. Then the gradient $\text{Gr}(\Psi)$ (see Section~\ref{31}) can be semi--closed or closed (see Section \ref{33}) and in the closed case it can be properly contained in $\mathring{\Phi}$ or it can be equal to $\mathring{\Phi}$. We also distinguish between $\mathring{\Phi}$ reduced and non--reduced. The following table summarizes the organization of the classification.

\begin{equation*} \label{n:table2} {\renewcommand{\arraystretch}{1.2}
\begin{array}{|c| c |c|} \hline

 (\Psi, \Phi) &  \mathring{\Phi} \text{ reduced } &  \mathring{\Phi} \text{ non--reduced }\\  \hline 

\rule{0pt}{15pt}  \text{Gr}(\Psi) \text{ semi--closed }&  \text{Section  \ref{section4}}: \ \text{Theorem \ref{Cng2case}}, \text{Theorem \ref{F4case}} \text{ and } \text{Theorem \ref{Bncase}} & \\ \cline{1-2} 

\rule{0pt}{15pt}\text{Gr}(\Psi)=\mathring{\Phi}   & \text{Section \ref{section5}}:\  \text{Theorem  \ref{part2g2f4}} \text{ and } \text{Theorem \ref{Bnformcasefull}} &  \text{Section \ref{section7}}: \\ \cline{1-2}

\rule{0pt}{15pt}\text{Gr}(\Psi) \text{ proper closed } &  \text{Section \ref{section6}}: \ \text{Theorem \ref{propgrm}}& \text{Theorem \ref{mainBCn}}  \\ \cline{1-3}

\end{array}}
\end{equation*}
\section{Extension datum and affine reflection system}\label{section2}
\subsection{}\label{section21}Let $V$ a fixed Eucledian real vector space, i.e. $V$ is endowed with a positive definite symmetric bilinear form  $(\cdot,\cdot)$. For the rest of this paper we denote by $\mathring{\Phi}$ a root system in $V$, i.e. $ \mathring{\Phi}$ is a finite subset of $V$ satisfying the following properties (see \cite[Chapter VI]{Bou46} or \cite[Section 9.2]{Hu80}):
$$0\notin \mathring{\Phi},\ \ \mathring{\Phi} \text{ spans $V$},\ \ s_{\alpha}(\mathring{\Phi})=\mathring{\Phi},\ \forall\alpha\in\mathring{\Phi},\ \ (\beta,\alpha^{\vee})\in\bz,\ \forall \alpha,\beta\in \mathring{\Phi},$$
where $\alpha^{\vee}:=2\alpha/(\alpha,\alpha)$ and $s_{\alpha}$ is the reflection of $V$ defined by $s_{\alpha}(v)=v-(v,\alpha^{\vee})\alpha$, $v\in V$.
We call $\mathring{\Phi}$ reduced if it satisfies the additional property $\br\alpha\cap \mathring{\Phi}=\{\pm \alpha\}$ for $\alpha\in \mathring{\Phi}$. Moreover, we call $\mathring{\Phi}$ irreducible (or connected) whenever $\mathring{\Phi}=\mathring{\Phi}'\cup \mathring{\Phi}''$ with $(\mathring{\Phi}',\mathring{\Phi}'')=0$ implies $\mathring{\Phi}'=\emptyset$ or $\mathring{\Phi}''=\emptyset$. Any root system can be written as a direct sum of irreducible root systems (see \cite[Proposition 6]{Bou46}) and the reduced irreducible root systems were classified in terms of their Dynkin diagrams (see \cite[Theorem 11.4]{Hu80}). There are the classical types $A_n, n\geq 1, B_n, n\geq 2, C_n, n\geq 3, D_n, n\geq 4$ and the exceptional types $E_{6,7,8}, F_4$ and $G_2$. For a direct contruction of these root systems we refer to \cite[Section 12.1]{Hu80}. Moreover, there is only one non--reduced irreducible root system of rank $n$, namely 
\begin{equation}\label{nonre}BC_n=B_n\cup C_n=\{\pm\epsilon_i: 1\leq i\leq n\}\cup \{\pm\epsilon_i\pm \epsilon_j: 1\leq i,j\leq n\}\backslash\{0\},\end{equation}
where $\epsilon_1,\dots,\epsilon_n$ denotes an orthonormal basis of $V$ with respect to $(\cdot,\cdot)$.
Reduced root systems appear in the context of finite--dimensional semi--simple Lie algebras as the set of roots of the Lie algebra with respect to a Cartan subalgebra \cite{Hu80} and non--reduced root systems appear in the context of infinite--dimensional Lie algebras \cite{K90}; see also Example~\ref{ex1}.
\subsection{}
If $\mathring{\Phi}$ is a reduced irreducible root system in $V$, then at most two root lengths occur and all roots of a given length are conjugate under the Weyl group $W(\mathring{\Phi})$, which by definition is the group generated by $s_{\alpha},\alpha\in\mathring{\Phi}$ (see for example \cite[Section 10.4]{Hu80}). We denote the set of short roots (resp. long roots) by $\mathring{\Phi}_s$ (resp. $\mathring{\Phi}_\ell$) and if there is only one root length we say that every root is long. If $\mathring{\Phi}$ is non--reduced irreducible we define (compare with \eqref{nonre}):
$$\mathring{\Phi}_s=\{\pm\epsilon_i,\ 1\leq i\leq n\},\ \ \mathring{\Phi}_\ell=\{\pm\epsilon_i\pm\epsilon_j: 1\leq i\neq j\leq n\},\ \ \mathring{\Phi}_d=\{\pm2\epsilon_i,\ 1\leq i\leq n\}.$$
Note that $\mathring{\Phi}_d$ is simply the set of divisible roots, i.e. $\mathring{\Phi}_d=\{\alpha\in \mathring{\Phi}: \alpha/2\in \mathring{\Phi}\}$ and define the non--divisible roots by $\mathring{\Phi}_{nd}=\mathring{\Phi}_{}\backslash \mathring{\Phi}_{d}$. Further, set
$$m_{\mathring{\Phi}}=\begin{cases}1,& \text{if $\mathring{\Phi}$ is of type $A_n,D_n,E_{6,7,8}$,}\\
2,& \text{if $\mathring{\Phi}$ is of type $B_n, C_n, F_4$,}\\
3,& \text{if $\mathring{\Phi}$ of type $G_2$,}\\
4,& \text{if $\mathring{\Phi}$ is non--reduced irreducible.}
\end{cases}
$$
The following lemma is an easy exercise.
\begin{lem}\label{hum}
Let $\mathring{\Phi}$ an irreducible root system not of simply--laced type. Then the following holds:
\begin{enumerate}
\item Let $\alpha\in \mathring{\Phi}_x,\ \beta\in \mathring{\Phi}_y$ such that $\alpha+\beta\in \mathring{\Phi}_z$, where $x,y,z\in\{s,\ell,d\}$. Then we have
\begin{align*}\hspace{1,2cm}&(x,y,z)\in\{(s,s,\ell), (s,\ell,s),(\ell,\ell,\ell)^*\},\ \text{if $\mathring{\Phi}$ is of type $B_n$,}&\\&
(x,y,z)\in\{(s,s,\ell), (s,\ell,s),(s,s,s)\},\ \text{if $\mathring{\Phi}$ is of type $C_n$,}&\\&
(x,y,z)\in\{(s,s,\ell), (s,\ell,s),(s,s,s),(\ell,\ell,\ell)\},\ \text{if $\mathring{\Phi}$ is of type $F_4$ or $G_2$,}&\\&
(x,y,z)\in\{(s,s,\ell),(s,\ell,s), (\ell,\ell,\ell)^*,(\ell,\ell,d), (\ell,d,\ell), (s,d,s),(s,s,d)\},\ \text{if $\mathring{\Phi}$ is of type $BC_n$,}\end{align*}
where the superscript $*$ means that it only appears when $n\geq 3$.
\item If $\mathring{\Phi}$ is not of type $BC_1$, there exists $\alpha,\beta\in\mathring{\Phi}_s$ such that $\alpha+\beta\in \mathring{\Phi}_\ell$. If $\mathring{\Phi}$ is of type $BC_n$ we can find $\alpha,\beta\in\mathring{\Phi}_s$ such that $\alpha+\beta\in\mathring{\Phi}_d$.
\end{enumerate}\hfill\qed
\end{lem}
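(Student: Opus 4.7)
The plan is to proceed by direct case-by-case verification using the explicit realizations of the non-simply-laced irreducible root systems. For $B_n$, I would use $\mathring{\Phi}_s=\{\pm\epsilon_i\}$ and $\mathring{\Phi}_\ell=\{\pm\epsilon_i\pm\epsilon_j:i\neq j\}$; for $C_n$, $\mathring{\Phi}_s=\{\pm\epsilon_i\pm\epsilon_j:i\neq j\}$ and $\mathring{\Phi}_\ell=\{\pm 2\epsilon_i\}$; for $BC_n$, the three sets are as recalled in the paper; for $F_4$ and $G_2$, I would use the concrete realizations from \cite[Chapter VI]{Bou46} or \cite[Section 12.1]{Hu80}. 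Since the claim is purely combinatorial, everything reduces to reading off possible sums from the explicit coordinates.

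For part (1), I would fix $\alpha\in\mathring{\Phi}_x$, $\beta\in\mathring{\Phi}_y$ with $\alpha+\beta\in\mathring{\Phi}_z$ and enumerate the possibilities for each type. For instance, in $B_n$: two short roots $\pm\epsilon_i+\pm\epsilon_j$ form a root only when $i\neq j$, and then it is long, giving $(s,s,\ell)$; a short plus a long root can only collapse to a single $\pm\epsilon_k$, giving $(s,\ell,s)$; and a sum of two long roots is of the form $\pm\epsilon_i\pm\epsilon_j\pm\epsilon_k\pm\epsilon_l$, which lies in $\mathring{\Phi}$ only when one pair cancels, producing a long root and requiring three distinct indices, hence $n\geq 3$ (explaining the superscript $*$). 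The $C_n$ case is obtained by a similar calculation. For $BC_n$ the additional cases involving divisible roots arise from identities such as $(\epsilon_i-\epsilon_j)+(\epsilon_i+\epsilon_j)=2\epsilon_i$ for $(\ell,\ell,d)$, $(\epsilon_i-\epsilon_j)+2\epsilon_j=\epsilon_i+\epsilon_j$ for $(\ell,d,\ell)$, $\epsilon_i+2(-\epsilon_i)=-\epsilon_i$ for $(s,d,s)$, and $\epsilon_i+\epsilon_i=2\epsilon_i$ for $(s,s,d)$, and the list is seen to be exhaustive by length considerations. For $F_4$ and $G_2$, I would use the standard 24-root and 12-root realizations and directly inspect all sums; as a sanity check, both admit a triple $(\ell,\ell,\ell)$ (e.g.\ two orthogonal long roots in $F_4$, or two long roots meeting at angle $2\pi/3$ in $G_2$) and a triple $(s,s,s)$ (e.g.\ two short roots meeting at angle $2\pi/3$ in $G_2$), explaining the inclusion of those triples in both exceptional types.

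For part (2), I would exhibit explicit pairs: in $B_n$ and $BC_n$ with $n\geq 2$, take $\alpha=\epsilon_1$, $\beta=\epsilon_2$, so that $\alpha+\beta=\epsilon_1+\epsilon_2\in\mathring{\Phi}_\ell$; in $C_n$ with $n\geq 3$, take $\alpha=\epsilon_1-\epsilon_2$, $\beta=\epsilon_1+\epsilon_2$, giving $2\epsilon_1\in\mathring{\Phi}_\ell$; and concrete pairs in $F_4$ and $G_2$ are immediate from their realizations. For the second assertion, $\alpha=\beta=\epsilon_i$ gives $\alpha+\beta=2\epsilon_i\in\mathring{\Phi}_d$ in $BC_n$. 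The exclusion of $BC_1$ reflects that its only short roots are $\pm\epsilon_1$, whose sums are either $0$ or $\pm 2\epsilon_1\in\mathring{\Phi}_d$, never long.

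No step is conceptually hard; the only mild obstacle is ensuring exhaustiveness of the length-combination analysis for $F_4$ and $G_2$, where one must verify that no triple outside the claimed list occurs. This can be done either by a direct inspection of the (finite) root data, or more slickly from the constraint $|\alpha+\beta|^2=|\alpha|^2+|\beta|^2+2(\alpha,\beta)$ combined with the admissible values of the Cartan integer $(\alpha,\beta^\vee)\in\mathbb{Z}$, which leaves only a handful of possibilities to rule in or out.
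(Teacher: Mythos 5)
The paper offers no proof of this lemma (it is stated as \say{an easy exercise} with the proof omitted), and your case-by-case verification from the explicit realizations is exactly the intended routine argument; your enumeration of the admissible triples and the explicit witnesses for part (2), including the $BC_1$ exclusion, are all correct. One slip: your sanity-check example for $(\ell,\ell,\ell)$ in $F_4$, \say{two orthogonal long roots}, does not work --- if $\alpha\perp\beta$ and both are long then $|\alpha+\beta|^2=2|\alpha|^2$, which is not a root length in $F_4$ (concretely, $(\epsilon_1-\epsilon_2)+(\epsilon_1+\epsilon_2)=2\epsilon_1\notin F_4$). The triple $(\ell,\ell,\ell)$ does occur, but via long roots at angle $2\pi/3$, e.g.\ $(\epsilon_1-\epsilon_2)+(\epsilon_2-\epsilon_3)=\epsilon_1-\epsilon_3$; with this correction (which your own closing remark about Cartan integers already suggests how to catch), the argument is complete.
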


\subsection{}Now we define our main object of study, namely affine reflection systems whose theory is developed by Loos and Neher in \cite{LN11} (see also \cite[Section 3.1]{N11F}). They generalize the notion of affine root systems (the set of roots of an affine Kac--Moody algebra) or more generally the notion of extended affine root systems (the set of roots of an extended affine Lie algebra). Our goal will be to work out a Borel-de Siebenthal theory for affine reflection systems and classify their maximal closed subroot systems; see Section\ref{31} for a precise definition. 

Let $X$ a finite--dimensional real vector space, $(\cdot,\cdot)_X$ a symmetric bilinear form on $X$ and $\Phi\subseteq X$ a subset. Define 
\begin{align*}X^0&=\{x\in X: (x,X)_X=0\},\\ \Phi^{\text{im}}&=\{\alpha\in \Phi: (\alpha,\alpha)_X=0\} ,\ (\text{imaginary roots})\\ \Phi^{\text{re}}& =\{\alpha\in \Phi: (\alpha,\alpha)_X\neq 0\},\ (\text{real roots}).\end{align*}
\begin{defn}\label{affrefl}
We call the triple $(X,\Phi,(\cdot,\cdot)_X)$ an affine reflection system if the following axioms are satisfied:
\begin{enumerate} 
\item [(i)] $0\in \Phi,\  \Phi \text{ spans $X$},\  \Phi^{\text{im}}\subseteq X^0$,
\item  [(ii)] $s_{\alpha}(\Phi)=\Phi,\ \ \forall \alpha\in \Phi^{\text{re}},$
\item  [(iii)] $(\Phi,\alpha^{\vee})_X\subseteq\bz \text{ is a finite subset $\forall \alpha\in \Phi^{\text{re}}$,}$
\end{enumerate}
where $s_{\alpha}$ and $\alpha^{\vee}$ are defined as in Section~\ref{section21} with respect to the form $(\cdot,\cdot)_X$. The rank of the torsion free abelian group $\bz[\Phi^{\text{im}}]$ is called the nullity of $(X,\Phi,(\cdot,\cdot)_X)$. Similarly, as in Section~\ref{section21}, we can define irreducible and reduced affine reflection systems. 

An isomorphism from an affine reflection system $(X,\Phi,(\cdot,\cdot)_X)$ to another affine reflection system $(X',\Phi',(\cdot,\cdot)_{X'})$ is a vector space isomorphism $f:X\rightarrow X'$ such that $f(X^0)=(X^0)'$ and $f(\Phi^{\text{re}})=(\Phi^{\text{re}})'$.
\end{defn}

\begin{rem}The requirement $0\in \Phi$ follows the approach of Loos and Neher (see for example \cite[Section 3.1]{N11F}) and contradicts the traditional approach to root systems in which 0 is not considered as a root (see \cite{Bou46} or \cite{Hu80}). Because of the notational advantages, we decided to include $0$ and follow their approach.
\end{rem}
Before we discuss concrete examples, let us state the structure theorem for affine reflection systems; see Theorem \ref{strthm}.
\subsection{}\label{sconsaff}It turns out that any reflection system can be constructed from a (finite) root system and an extension datum, which we will define now. 
\begin{defn}\label{extda}Let $\mathring{\Phi}$ a (finite) root system and $Y$ a finite--dimensional real vector space. We call a collection $(\Lambda_{\alpha}: \alpha\in \mathring{\Phi}\cup\{0\})$, $\Lambda_{\alpha}\subseteq Y$ an extension datum of type $(\mathring{\Phi},Y)$ if it satisfies the following axioms:
\begin{enumerate}
\item $\Lambda_{\beta}-(\beta,\a^{\vee})\Lambda_{\a}\subseteq \Lambda_{s_{\a}(\beta)},\ \ \forall \alpha,\beta\in \mathring{\Phi}$.
\item $0\in\Lambda_{\alpha}$ for $\alpha\in \mathring{\Phi}_{nd}\cup\{0\}$ and $\Lambda_{\alpha}\neq \emptyset$ for $\alpha\in \mathring{\Phi}_{d}$.
\item $Y=\text{span}_{\mathbb{R}}(\bigcup_{\alpha\in \mathring{\Phi}\cup\{0\}} \Lambda_\alpha)$.
\end{enumerate}
\end{defn}
Now given an extension datum of type $(\mathring{\Phi},Y)$ we can construct an affine reflection system as follows. Recall that $(\cdot,\cdot)$ is the fixed positive definite symmetric bilinear form on $V$. Define 
\begin{equation}\label{consaff}X=V\oplus Y,\ \ \Phi=\bigcup_{\alpha\in \mathring{\Phi}\cup\{0\}}\alpha\oplus \Lambda_{\alpha},\ \ (v_1\oplus y_1, v_2\oplus y_2)_X=(v_1,v_2).\end{equation}
The following result is the structure theorem for affine reflection systems \cite[Theorem 4.6]{LN11}.
\begin{thm}\label{strthm}
Let $\mathring{\Phi}$ a (finite) root system and $(\Lambda_{\alpha}: \alpha\in \mathring{\Phi}\cup\{0\})$ an extension datum of type $(\mathring{\Phi},Y)$. The triple $(\Phi,X,(\cdot,\cdot)_X)$ constructed in \eqref{consaff} is an affine reflection system with 
$$\Phi^{\text{im}}=\Lambda_0,\ \ X^0=Y,\ \ \Phi^{\text{re}}=\bigcup_{\alpha\in \mathring{\Phi}}\alpha\oplus \Lambda_{\alpha}.$$
Moreover, any affine reflection system is isomorphic to an affine reflection system  constructed in this way and $(\Phi,X,(\cdot,\cdot)_X)$ is irreducible if and only if the underlying finite root rystem $\mathring{\Phi}$ is irreducible.
\hfill\qed
\end{thm}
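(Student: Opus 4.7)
The plan is to prove the two directions separately. For the forward direction, given an extension datum $(\Lambda_\alpha)$ of type $(\mathring{\Phi},Y)$, I verify the axioms of Definition~\ref{affrefl} for the triple $(X,\Phi,(\cdot,\cdot)_X)$ built in \eqref{consaff} and in parallel read off the descriptions of $\Phi^{\text{im}}$, $\Phi^{\text{re}}$ and $X^0$. Because the form sees only the $V$-component, one has $X^0=Y$ and $(\alpha\oplus\lambda,\alpha\oplus\lambda)_X=(\alpha,\alpha)$, which vanishes exactly when $\alpha=0$; this yields $\Phi^{\text{im}}=\Lambda_0\subseteq Y$ and $\Phi^{\text{re}}=\bigcup_{\alpha\in\mathring{\Phi}}\alpha\oplus\Lambda_\alpha$. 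Axiom (i) follows from $0\in\Lambda_0$, the spanning axiom (3) of the datum, and $\Lambda_0\subseteq Y$. Axiom (iii) is immediate from $((\beta\oplus\mu),(\alpha\oplus\lambda)^\vee)_X=(\beta,\alpha^\vee)\in\bz$ together with finiteness of $\mathring{\Phi}$. The heart is axiom (ii): a direct calculation gives
\[ s_{\alpha\oplus\lambda}(\beta\oplus\mu)=s_\alpha(\beta)\oplus\bigl(\mu-(\beta,\alpha^\vee)\lambda\bigr), \]
and axiom (1) of the datum is exactly the inclusion needed for the image to lie in $\Phi$; equality $s_{\alpha\oplus\lambda}(\Phi)=\Phi$ then follows by involutivity.

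For the converse direction, start with an affine reflection system $(X,\Phi,(\cdot,\cdot)_X)$. Set $Y:=X^0$; since $Y$ is the radical of the form, the form descends to a non-degenerate form on $V:=X/Y$, and one checks it is positive definite. Let $\pi:X\to V$ denote the projection and put $\mathring{\Phi}:=\pi(\Phi^{\text{re}})$. I would then show $\mathring{\Phi}$ is a finite root system in $V$: reflection invariance from axiom (ii) via the fact that $s_\alpha$ descends along $\pi$ to $s_{\pi(\alpha)}$; spanning from axiom (i); and integrality $(\pi(\beta),\pi(\alpha)^\vee)=(\beta,\alpha^\vee)_X\in\bz$ from axiom (iii). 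Finiteness is the key input: fix real roots $\alpha_1,\dots,\alpha_n$ whose images form a basis of $V$; axiom (iii) says each set $(\Phi,\alpha_i^\vee)_X$ is finite, so any $\bar\beta\in\mathring{\Phi}$ is pinned down by the integer tuple $((\beta,\alpha_i^\vee)_X)_i$ drawn from a finite product, forcing $\mathring{\Phi}$ itself to be finite.

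After choosing a section $\sigma:V\hookrightarrow X$ with $\sigma(\alpha)\in\Phi$ for every non-divisible $\alpha\in\mathring{\Phi}$, I would define $\Lambda_\alpha:=\{y\in Y:\sigma(\alpha)+y\in\Phi\}$ and verify the three axioms of an extension datum; the reflection axiom (1) is essentially the forward computation run in reverse, (2) is built into the choice of $\sigma$, and (3) follows from $\Phi$ spanning $X$. The irreducibility statement then follows once the correspondence is set up: a decomposition $\mathring{\Phi}=\mathring{\Phi}'\sqcup\mathring{\Phi}''$ with $(\mathring{\Phi}',\mathring{\Phi}'')=0$ lifts by grouping the cosets $\alpha\oplus\Lambda_\alpha$ accordingly (with $\Lambda_0$ shared), and conversely any orthogonal split of $\Phi$ descends to one of $\mathring{\Phi}$ via $\pi$. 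The main obstacle is engineering the section $\sigma$: ensuring simultaneously that $\sigma(\alpha)\in\Phi$ for every non-divisible $\alpha$ and that every real root of $\Phi$ is captured as $\sigma(\pi(\gamma))+y$ for some $y\in Y$. This requires exploiting the $W(\mathring{\Phi})$-equivariance afforded by axiom (ii), picking a coherent lift of a representative in each Weyl orbit (one per root length) and spreading it out along the orbit by reflections, so that divisible roots in the $BC_n$ case can be handled without demanding $0\in\Lambda_\alpha$ there.
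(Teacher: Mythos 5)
The paper itself gives no proof of this theorem---it is quoted from \cite[Theorem 4.6]{LN11}---so your attempt is effectively measured against the standard Loos--Neher argument, and your overall architecture (check the axioms directly for the forward direction; for the converse pass to $V=X/X^0$, show $\pi(\Phi^{\text{re}})$ is a finite root system, choose a linear section, and read off the sets $\Lambda_\alpha$) is the right one; the forward direction and your finiteness argument via axiom (iii) are fine. However, one step as written is false: ``one checks it is positive definite.'' Under the paper's definition $(\cdot,\cdot)_X$ is an arbitrary symmetric bilinear form, and for instance $\Phi=\{0,\pm e_1,\pm e_2\}$ in $\br^2$ with the form $\mathrm{diag}(1,-1)$ satisfies (i)--(iii) while the induced form on $X/X^0=X$ is indefinite. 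The repair is standard but must be made explicit: $W(\mathring{\Phi})$ acts faithfully on the finite spanning set $\mathring{\Phi}$, hence is finite; averaging any inner product over it yields an invariant positive definite form for which each $s_\alpha$ is still the orthogonal reflection and the Cartan integers $(\beta,\alpha^\vee)$ are unchanged. Switching forms is legitimate here precisely because the paper's isomorphisms of affine reflection systems are only required to match $X^0$ and $\Phi^{\text{re}}$, not the bilinear forms.

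The second soft spot is the section $\sigma$, which you rightly identify as the main obstacle but resolve too loosely. Choosing ``one representative per root length and spreading it out along the orbit by reflections'' does not obviously produce a well-defined, let alone linear, lift, and linearity---more precisely the identity $\sigma(s_\alpha(\beta))=\sigma(\beta)-(\beta,\alpha^\vee)\sigma(\alpha)$ for \emph{all} $\alpha,\beta\in\mathring{\Phi}$---is exactly what your verification of axiom (1) for $\Lambda_\alpha:=\{y\in Y:\sigma(\alpha)+y\in\Phi\}$ consumes. The clean fix is the device the paper itself uses in Section~\ref{32}: lift a simple system of $\mathring{\Phi}$ arbitrarily to real roots, extend linearly to $\sigma:V\to X$ (the simple roots form a basis of $V$), and prove by induction along the Weyl orbit, applying axiom (ii) to the lifted simple roots, that $\sigma(\alpha)\in\Phi$ for every non-divisible $\alpha$; since the $W(\mathring{\Phi})$-orbit of the simple system is exactly $\mathring{\Phi}_{nd}$, this gives $0\in\Lambda_\alpha$ there, while for divisible roots in type $BC_n$ one only needs $\Lambda_\alpha\neq\emptyset$, which is automatic from $\alpha\in\pi(\Phi^{\text{re}})$. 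With these two repairs your outline does yield a complete proof along the same lines as the cited source.
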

\begin{example}\label{ex1}
\begin{enumerate}
\item [(i)] Let $\mathring{\Phi}$ a (finite) root system, then $\mathring{\Phi}\cup\{0\}$  is an affine reflection system of nullity $0$. We choose $Y=0$ in Definition~\ref{extda}.
\item [(ii)] Let $\lie g$ a finite-dimensional simple complex Lie algebra and $\sigma$ a diagram automorphism with respect to a Cartan subalgebra $\lie h$, which is of order $m\in\{1,2,3\}$. Let $\xi$ be a primitive $m$--th root of unity. We have
$$\lie g=\bigoplus_{j\in \bz/m\bz} \lie g_j,\ \  \lie g_{j}=\{x\in \lie g: \sigma(x)=\xi^j x\}.$$
It is known that $\lie g_0$ is again a finite-dimensional complex simple Lie algebra with Cartan subalgebra $\lie h_0=\lie h\cap \lie g_0$. Moreover, $\lie g_j$ is a $\lie g_0$--module and we denote the set of non--zero weights of $\lie g_j$ with respect to $\lie h_0$ by $\mathring{\Phi}_j$. We have that 
$$\mathring{\Phi}:=\mathring{\Phi}_0 \cup\cdots\cup \mathring{\Phi}_{m-1}\subseteq \lie h_0^*$$
is again a (finite) root system. The type of $\mathring{\Phi}$ can be extracted from \cite[Section 7.8, 7.9, 8.3]{K90} and is summarized in the following table
\begin{equation*} \label{n:table1} {\renewcommand{\arraystretch}{1.5}
\begin{array}{|c| c| c| c |c|} \hline
 (\lie g, m) &  \mathring{\Phi}_{0} & \mathring{\Phi}_{1} & \mathring{\Phi}_{2} &\mathring{\Phi} \\ \hline 
  (A_{2n}, 2) &  B_n 
    &  \mathring{\Phi}_0 \cup \{\pm 2\epsilon_i : 1 \le i \le n \} & / & BC_n \\ \hline
 (A_{2n-1}, 2)   & C_n & (C_{n})_ {s} & / &C_n \\ \hline
(D_{n+1}, 2) &  B_n & (B_{n})_ {s} & /& B_n \\
\hline
 (E_6,2) & F_4 & (F_{4})_ {s}  &/ & F_4 \\ \hline
 (D_4, 3) &  G_2 & (G_{2})_ {s}  & (G_{2})_ {s} & G_2 \\ \hline
\end{array}}
\end{equation*}

The corresponding affine Kac--Moody algebra $\widehat{\mathcal{L}}(\lie g,\sigma)$ is defined by
$$\widehat{\mathcal{L}}(\lie g,\sigma)=\mathcal{L}(\lie g,\sigma)\oplus \bc c\oplus \bc d,\ \ \mathcal{L}(\lie g,\sigma)=\bigoplus_{j\in \bz/m\bz} \lie g_{j}\otimes \bc[t^{\pm m}]t^{j},$$ where $\mathcal{L}(\lie g,\sigma)$ is called the loop algebra, $\mathcal{L}(\lie g,\sigma)\oplus \bc c$ is the universal central extension of the loop algebra and $d=t\partial_t$ is the degree derivation. For more details we refer the reader to \cite[Section 7,8]{K90}. The set of roots of $\widehat{\mathcal{L}}(\lie g,\sigma)$ with respect to the Cartan subalgebra $\lie h_0\oplus \bc c\oplus \bc d$ is exactly $\Phi\backslash\{0\}$, where
$$\Phi:=\{\alpha\oplus r\delta: \alpha\in \mathring{\Phi}_j\cup\{0\},\ \ r\in j+\bz m,\ 0\leq j< m\}.$$
We claim that $\Phi$ is an affine reflection system of nullity 1. Let $V:=(\lie h_0)^*_{\mathbb{R}}$ the real span of $\mathring{\Phi}$ and let $(\cdot,\cdot)$ the dual of the Killing form $\kappa(\cdot,\cdot)$ restricted to $(\lie h_0)_{\mathbb{R}}\times (\lie h_0)_{\mathbb{R}}$ (the restriction is still non--degenerate). Set
$$X=V\oplus \br \delta=V\oplus Y,\ \ (\beta_1\oplus r_1\delta,\beta_2\oplus r_2\delta)_X=(\beta_1,\beta_2).$$
Then, we have $$X^{0}=\br \delta,\ \ \Phi^{\text{re}}=\{\alpha\oplus r\delta\in \Phi : \alpha\neq 0\},\ \Phi^{\text{im}}=\bz\delta.$$
If $m=1$, we set  $\Lambda_\alpha=\bz\delta$ for all $\alpha\in \mathring{\Phi}\cup\{0\}$. Otherwise, set
$$\Lambda_{\alpha}=\begin{cases} \bz\delta,& \text{if $\alpha\in \mathring{\Phi}_s\cup\{0\}$}\\
\bz\delta,& \text{if $\alpha\in \mathring{\Phi}_\ell$,\ $\mathring{\lie g}=A_{2n}$, $m=2$}\\
2\bz\delta,& \text{if $\alpha\in \mathring{\Phi}_\ell$,\ $\mathring{\lie g}\neq A_{2n}$, $m=2$}\\
3\bz\delta,& \text{if $\alpha\in \mathring{\Phi}_\ell$,\ $m=3$}\\
(2\bz+1)\delta,& \text{if $\alpha\in \mathring{\Phi}_d$}\end{cases}
$$
It is immediate to show that $\mathcal{R}=(\Lambda_{\alpha}, \alpha\in \mathring{\Phi}\cup\{0\})$ is an extension datum (see Definition \ref{extda}) and $\Phi$ is the affine reflection system constructed from $\mathcal{R}$ as in \eqref{consaff}. Since $\bz[\Phi^{\text{im}}]\cong \bz$, the nullity of $\Phi$ is 1.
\item  [(iii)] Toroidal Lie algebras underlie a similar construction as affine Kac--Moody algebras by replacing the loop algebra by the multiloop algebra $\mathcal{L}(\lie g)=\lie g\otimes \bc[t_1^{\pm},\dots,t_k^{\pm}]$ and $\sigma$ by a family of finite order automorphisms $(\sigma_1,\dots,\sigma_k)$. For simplicity we dicuss the untwisted case only, i.e. when all authomorphisms are trivial. We follow the definition of toroidal Lie algebras used for example in \cite{N11F} or \cite[Section 1]{Ra04} ($\tilde \tau$ in his notation). Let 
$$C=\bigoplus_{p\in \bz^k} C_p, \ \ C_p:=\bc^k/\bc p,$$ and define 
$$\psi: \mathcal{L}(\lie g)\times \mathcal{L}(\lie g)\rightarrow C,\ \ \psi(x\otimes t^{r},y\otimes t^s)=\bigoplus_{p\in\bz}\kappa(x,y)\delta_{r+s,-p}\overline r,$$
where $t^r:=t_1^{r_1}\cdots t_k^{r_k}$ for $r=(r_1,\dots,r_k)\in\bz^k$. It follows from \cite[Theorem 1.5]{N11F} that $\mathcal{L}(\lie g)\oplus C$ is the universal central extension of the multiloop algebra whose Lie bracket is given by $$[x\otimes t^r+\mu c,y\otimes t^s+\lambda c]=[x,y]\otimes t^{r+s}+\psi(x\otimes t^{r},y\otimes t^s).$$ The toroidal Lie algebra is then given by
$$\Gamma(\lie g)=\mathcal{L}(\lie g)\oplus C\oplus D,$$
where $D=\text{span}_{\bc}\{t_i\partial_{t_i}: 1\leq i\leq k\}$ is the space of degree derivations. Denote by $\mathring{\Phi}$ the root system of $\lie g$. The set of roots of the toroidal Lie algebra $\Gamma(\lie g)$ with respect to the Cartan subalgebra $\lie h\oplus C_0\oplus D$ is $\Phi\backslash \{0\}$, where
$$\Phi=\{\alpha\oplus\delta_r: \alpha\in \mathring{\Phi}\cup \{0\},\ r\in \bz^k\},$$
where $\delta_r:=r_1\delta_1+\cdots+r_k\delta_k$. We claim that $\Phi$ is an affine reflection system of nullity $k$. Again we denote by $\lie h_{\mathbb{R}}$ the real span of $\mathring{\Phi}$ and $(\cdot,\cdot)$ the dual of the Killing form. Set
$$X=\lie h_{\mathbb{R}}^*\oplus (\br \delta_1\oplus\cdots \oplus \br \delta_k)=V\oplus Y,\ \ (\beta_1\oplus \delta_{r_1},\beta_2\oplus \delta_{r_2})_X=(\beta_1,\beta_2).$$
Then, we have $$X^{0}=\br\delta_1\oplus\cdots \oplus \br \delta_k,\ \ \Phi^{\text{re}}=\{\alpha\oplus \delta_r: \alpha\in \mathring{\Phi},\ r\in\bz^k\},\ \Phi^{\text{im}}=\bz \delta_1\oplus\cdots \oplus \bz \delta_k.$$
Now it is easy to see that $\Lambda_{\alpha}=\bz^k$ for all $\alpha\in \mathring{\Phi}\cup \{0\}$ defines an extension datum and $\Phi$ is constucted as in $\eqref{consaff}$. Since $\Phi^{\text{im}}\cong \bz^k$, the nullity of $\Phi$ is $k$.
\end{enumerate}
\end{example}
The above examples show that the set of roots of toroidal Lie algebras (in particular of affine Kac--Moody algebras) are affine reflection systems. More generally, the roots of an extended affine Lie algebra form also an affine reflection system; see \cite[Section 3.4]{N11F} for more details. So the theory of affine reflection systems provide a common framework for (finite) root systems, affine root systems and extended affine root systems. 
\subsection{}We finish this section by some elementary properties of an extension datum. Most of the results are stated in \cite[Exercise 3.16]{N11F} and \cite[Theorem 3.18]{N11F}. We give some of the proofs for the readers convenience.
\begin{prop}\label{prop1a}
Let $(\Lambda_{\alpha},\alpha\in\mathring{\Phi}\cup \{0\})$ an extension datum of type $(\mathring{\Phi},Y)$. Then, for all $\alpha\in\mathring{\Phi}$ we have
$$\Lambda_{\alpha}=\Lambda_{w(\alpha)},\ \forall w\in W(\mathring{\Phi}),\ \ \Lambda_{\alpha}=\Lambda_{-\alpha}=-\Lambda_{\alpha}.$$
\begin{proof}
Let $\alpha\in\mathring{\Phi}$ and $\beta\in \mathring{\Phi}_{nd}$. Then we have by Definition~\ref{extda}(1):
$$\Lambda_{\alpha}-(\alpha,\beta^{\vee})\Lambda_{\beta}\subseteq \Lambda_{s_{\beta}(\alpha)}.$$
Since $0\in \Lambda_{\beta}$ we get $\Lambda_{\alpha}\subseteq \Lambda_{s_{\beta}(\alpha)}$. Continuing in this way we get $\Lambda_{\alpha}\subseteq \Lambda_{w(\alpha)}$ for all $w\in W(\mathring{\Phi})$; note that the Weyl group is generated by $s_{\beta},\beta\in \mathring{\Phi}_{nd}$. Since $\Lambda_{w(\alpha)}\subseteq \Lambda_{w^{-1}w(\alpha)}$, the first part of the claim follows. For the second part we consider $\Lambda_{\alpha}-2\Lambda_{\alpha}\subseteq \Lambda_{-\alpha}.$ This gives $-\Lambda_{\alpha}=\Lambda_{-\alpha}$ for all $\alpha\in \mathring{\Phi}$ and $\Lambda_{\alpha}=\Lambda_{-\alpha}$ for all $\alpha\in \mathring{\Phi}_{nd}$, since $0\in \Lambda_{\alpha}$ whenever $\alpha$ is non--divisible. It remains to show $\Lambda_{\alpha}=\Lambda_{-\alpha}$ for all $\alpha\in \mathring{\Phi}_{d}$. But this follows since $\alpha/2\in \mathring{\Phi}_{nd}$ and $\Lambda_{\alpha}-(\alpha,(\alpha/2)^{\vee})\Lambda_{\alpha/2}\subseteq \Lambda_{-\alpha}$.
\end{proof}
\end{prop}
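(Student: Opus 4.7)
The plan is to exploit axiom (1) of Definition \ref{extda} together with the fact that $0\in\Lambda_\beta$ whenever $\beta\in\mathring{\Phi}_{nd}$: substituting this $0$ into the inclusion $\Lambda_\alpha-(\alpha,\beta^\vee)\Lambda_\beta\subseteq\Lambda_{s_\beta(\alpha)}$ collapses it to the simpler containment $\Lambda_\alpha\subseteq\Lambda_{s_\beta(\alpha)}$. Reversing the roles of $\alpha$ and $s_\beta(\alpha)$ upgrades this to equality $\Lambda_\alpha=\Lambda_{s_\beta(\alpha)}$. Since every element of $W(\mathring{\Phi})$ is a product of reflections through non-divisible roots (a base of $\mathring{\Phi}$ lies entirely in $\mathring{\Phi}_{nd}$), a straightforward induction on word length yields the first claim $\Lambda_\alpha=\Lambda_{w(\alpha)}$ for all $w\in W(\mathring{\Phi})$.

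For the symmetry statement I would first specialize axiom (1) to $\alpha=\beta$, obtaining the containment $\Lambda_\alpha-2\Lambda_\alpha\subseteq\Lambda_{-\alpha}$ for every $\alpha\in\mathring{\Phi}$. Inserting $x=y\in\Lambda_\alpha$ into the difference gives $-\Lambda_\alpha\subseteq\Lambda_{-\alpha}$ with no hypothesis on $\alpha$. When $\alpha\in\mathring{\Phi}_{nd}$ the additional choice $y=0$ is legitimate and produces $\Lambda_\alpha\subseteq\Lambda_{-\alpha}$; swapping $\alpha$ and $-\alpha$ yields $\Lambda_\alpha=\Lambda_{-\alpha}$, and the identity $-\Lambda_\alpha=\Lambda_\alpha$ is then immediate.

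The genuinely delicate case, which is the main obstacle, is $\alpha\in\mathring{\Phi}_d$: here axiom (2) only guarantees $\Lambda_\alpha\neq\emptyset$, so the substitution $y=0$ inside $\Lambda_\alpha$ is no longer available. The trick is to route through the non-divisible root $\alpha/2$, for which $0\in\Lambda_{\alpha/2}$ is known. Since $s_{\alpha/2}(\alpha)=-\alpha$, applying axiom (1) in the form $\Lambda_\alpha-(\alpha,(\alpha/2)^\vee)\Lambda_{\alpha/2}\subseteq\Lambda_{-\alpha}$ and setting the $\Lambda_{\alpha/2}$-element equal to $0$ produces $\Lambda_\alpha\subseteq\Lambda_{-\alpha}$; the reverse inclusion is obtained by swapping $\alpha\leftrightarrow-\alpha$. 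Finally, $\Lambda_\alpha=-\Lambda_\alpha$ follows by combining the universal inclusion $-\Lambda_\alpha\subseteq\Lambda_{-\alpha}$ from the previous paragraph with the newly established $\Lambda_{-\alpha}=\Lambda_\alpha$, and then negating. Apart from the divisible case everything is a direct bookkeeping exercise with axiom (1), but it is precisely the absence of a distinguished $0$ in $\Lambda_\alpha$ for divisible $\alpha$ that makes passing through $\alpha/2$ essential.
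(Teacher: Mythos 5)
Your proposal is correct and follows essentially the same route as the paper: the $0$-substitution into axiom (1) with non-divisible reflections for the Weyl-group invariance, the specialization $\beta=\alpha$ (with $x=y$ resp.\ $y=0$) for the symmetry statements, and the same detour through $\alpha/2$ using $s_{\alpha/2}(\alpha)=-\alpha$ to handle divisible roots. No gaps.
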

In particular, if $\mathring{\Phi}$ is irreducible we know that the Weyl group acts transitively on the roots of the same length and hence the above proposition implies that an extension datum of type $(\mathring{\Phi},Y)$ consists of at most 4 different subsets, namely $\Lambda_{0},\Lambda_{s},\Lambda_{\ell},\Lambda_{d}$ defined in the obvious way. For example, in the simply--laced case we have only one subset and by convention $\Lambda_s=\Lambda_\ell$. Hence if $(X,\Phi,(\cdot,\cdot)_X)$ is an irreducible affine reflection system, we know from Theorem~\ref{strthm} that there exists a (finite) irreducible root system $\mathring{\Phi}$ and an extension datum $(\Lambda_{0},\Lambda_{s},\Lambda_{\ell},\Lambda_{d})$ of type $(\mathring{\Phi},Y)$ such that (up to isomorphism)
\begin{equation*}\Phi=(0\oplus \Lambda_0) \cup \bigcup_{\alpha\in \mathring{\Phi}_s} (\alpha\oplus \Lambda_s) \cup  \bigcup_{\alpha\in \mathring{\Phi}_\ell} (\alpha\oplus \Lambda_\ell)\cup  \bigcup_{\alpha\in \mathring{\Phi}_d} (\alpha\oplus \Lambda_d)\end{equation*}
It is obvious from Definition~\ref{extda}(1) that there are certain relations among the subsets $\Lambda_s,\Lambda_{\ell},\Lambda_d$. We summarize the relations in the next lemma (see \cite[Theorem 3.18]{N11F}).
\begin{lem}\label{inclusionen} Let $\mathring{\Phi}$ an irreducible finite root system and $(\Lambda_{\alpha}, \alpha\in \mathring{\Phi}\cup\{0\})$ an extension datum of type $(\mathring{\Phi},Y)$. Then we have $\Lambda_x+2\Lambda_x=\Lambda_x$ for $x\in\{s,\ell,d\}$. Moreover,
\begin{itemize}

\item[\rm (i)] If $\mathring{\Phi}$ is simply--laced not of type $A_1$, then  $\Lambda_\ell$ is a subgroup.

\item[\rm (ii)] If $\mathring{\Phi}$ is of type $B_n$, $C_n$ or $F_4$, then
$$\Lambda_\ell + 2 \Lambda_s \subseteq \Lambda_\ell
\quad \hbox{and}\quad \Lambda_s +  \Lambda_\ell \subseteq \Lambda_s.$$ Moreover,

    \begin{itemize}
        \item[$\bullet$] $\Lambda_\ell $ is a subgroup if $\mathring{\Phi}$ is of type $B_n, n\geq 3$ or $F_4$, and

        \item[$\bullet$] $\Lambda_s$ is a subgroup if $\mathring{\Phi}$ is of type $C_n$ or $F_4$.
    \end{itemize}

\item[\rm(iii)]  If $\mathring{\Phi}$ is of type $G_2$, then $\Lambda_s$ and $\Lambda_\ell$ are subgroups satisfying
$$ \Lambda_\ell + 3 \Lambda_s  \subseteq \Lambda_\ell
\quad \hbox{and}\quad \Lambda_s + \Lambda_\ell \subseteq \Lambda_s.$$


\item[\rm (iv)]  If $\mathring{\Phi}$ is of type $BC_n, n\geq 2$, then 
\begin{center} \begin{tabular}{ccc}
 $\Lambda_\ell + 2 \Lambda_s \subseteq \Lambda_\ell$, &\quad &
  $\Lambda_s + \Lambda_\ell \subseteq \Lambda_s,$ \\
 $\Lambda_d + 2 \Lambda_\ell \subseteq \Lambda_d$, &\quad &
     $ \Lambda_\ell +\Lambda_d \subseteq \Lambda_\ell$, \\
  $\Lambda_d + 4 \Lambda_s \subseteq \Lambda_d$, &\quad &
 $\Lambda_s + \Lambda_d \subseteq \Lambda_s.$
\end{tabular} \end{center}
In addition, if $n\ge 3$ then $\Lambda_\ell$ is a subgroup.
\end{itemize}
\hfill\qed
\end{lem}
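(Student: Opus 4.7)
The only substantive tool available is axiom~(1) of Definition~\ref{extda}, namely
\[
\Lambda_\beta - (\beta,\alpha^\vee)\Lambda_\alpha \subseteq \Lambda_{s_\alpha(\beta)},
\]
combined with Proposition~\ref{prop1a} ($-\Lambda_\alpha = \Lambda_\alpha$ and Weyl invariance) and the fact that $0 \in \Lambda_\alpha$ whenever $\alpha$ is non-divisible. The entire lemma is proved by specialising this inclusion to concrete pairs $(\alpha,\beta)$ of roots of appropriate length classes so that $(\beta,\alpha^\vee)$ produces the required integer coefficient and $s_\alpha(\beta)$ lands in the correct length class. Lemma~\ref{hum} supplies exactly the combinatorial bookkeeping needed to guarantee the existence of such pairs in each type.

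I would begin by establishing the uniform identity $\Lambda_x + 2\Lambda_x = \Lambda_x$ for $x\in\{s,\ell,d\}$. Taking $\beta = -\alpha$ with $\alpha$ of length class $x$, one has $s_\alpha(-\alpha) = \alpha$ and $(-\alpha,\alpha^\vee) = -2$, so the axiom together with $\Lambda_{-\alpha} = \Lambda_\alpha$ yields $\Lambda_x + 2\Lambda_x \subseteq \Lambda_x$. The reverse inclusion is immediate from $0 \in \Lambda_x$ for $x\in\{s,\ell\}$; for $x = d$ one picks any $z \in \Lambda_d$ (non-empty by Definition~\ref{extda}(2)) and writes $y = (y - 2z) + 2z$ for $y \in \Lambda_d$, where $y - 2z \in \Lambda_d$ by the first inclusion.

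The type-specific inclusions follow the same recipe. For $B_n,C_n$ or $F_4$ with $\alpha = \epsilon_1$ short and $\beta = \epsilon_1 + \epsilon_2$ long one computes $(\beta,\alpha^\vee) = 2$ and $s_\alpha(\beta) = -\epsilon_1 + \epsilon_2$ long, giving $\Lambda_\ell + 2\Lambda_s \subseteq \Lambda_\ell$; swapping roles produces $\Lambda_s + \Lambda_\ell \subseteq \Lambda_s$. For $G_2$ the squared-length ratio is $3$, which accounts for the coefficient in $\Lambda_\ell + 3\Lambda_s \subseteq \Lambda_\ell$. The six inclusions in the $BC_n$ case are obtained analogously: Lemma~\ref{hum}(1) lists the realisable triples of length classes $(x,y,z)$ with $\alpha+\beta\in \mathring{\Phi}_z$ for $\alpha\in\mathring{\Phi}_x,\beta\in\mathring{\Phi}_y$, and for each such triple the axiom delivers the matching inclusion with coefficient $|(\beta,\alpha^\vee)|\in\{1,2,4\}$.

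Finally, the subgroup assertions reduce to showing $\Lambda_x + \Lambda_x \subseteq \Lambda_x$ for the relevant non-divisible class $x$; once this is in hand, $-\Lambda_x = \Lambda_x$ together with $0 \in \Lambda_x$ makes $\Lambda_x$ a subgroup. For instance $\Lambda_\ell + \Lambda_\ell \subseteq \Lambda_\ell$ in $B_n$, $n \geq 3$, follows from $\alpha = \epsilon_1 + \epsilon_2$, $\beta = \epsilon_2 + \epsilon_3$ (both long with $(\beta,\alpha^\vee) = 1$ and $s_\alpha(\beta) = -\epsilon_1 + \epsilon_3$ long), and analogous pairs work in the simply-laced case of rank $\geq 2$, in $F_4$, and for short roots in $C_n,F_4,G_2$. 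The exclusions in the statement (e.g.\ $A_1$, $B_2$) are not artefacts: in $B_2$ any two long roots have inner product in $\{0,\pm 2\}$, so the coefficient $1$ cannot be realised, and indeed $\Lambda_\ell + 2\Lambda_\ell = \Lambda_\ell$ is consistent with $\Lambda_\ell$ not being a subgroup (think of the odd integers). The main obstacle is therefore not conceptual but bookkeeping: one must verify, subcase by subcase (with the $BC_n$ divisible class and the possibility $0\notin\Lambda_d$ as the delicate points), that the requisite pairs exist with the right coefficients in the included types and are genuinely absent in the excluded ones.
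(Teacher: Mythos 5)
Your argument is correct and is essentially the proof the paper has in mind: the lemma is quoted from \cite[Theorem 3.18]{N11F}, and the remark following it indicates precisely your method of feeding well-chosen pairs of roots into Definition~\ref{extda}(1) (e.g.\ two long, respectively short, simple roots joined by a single edge for the subgroup claims), with Proposition~\ref{prop1a} supplying $-\Lambda_\alpha=\Lambda_\alpha$ and $0\in\Lambda_\alpha$ for non-divisible $\alpha$. The only slip is cosmetic: $\epsilon_1$ and $\epsilon_1+\epsilon_2$ are not roots of $C_n$, so for that type take instead $\alpha=\epsilon_1-\epsilon_2$ (short) and $\beta=2\epsilon_1$ (long), which gives $(\beta,\alpha^{\vee})=2$ and $s_\alpha(\beta)=2\epsilon_2$ long, yielding the same inclusions.
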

For example, the fact that $\Lambda_{\ell}$ (resp. $\Lambda_s$) is a subgroup in type $B_n, n\geq 3$ (resp. $C_n, n\geq 3$) can be easily seen from Definition~\ref{extda}(1) by choosing two arbitrary simple long roots (resp. simple short roots) $\alpha$ and $\beta$ connected in the Dynkin diagram by a single line.
\begin{defn}We call an extension datum untwisted if $\Lambda_{\alpha}=\Lambda_{\beta}$ for all $\alpha,\beta\in\mathring{\Phi}$ and twisted otherwise. If $\mathring{\Phi}$ is irreducible, the definition of untwisted is equivalent to $\Lambda_s\subseteq \Lambda_d\subseteq \Lambda_{\ell}$ by Lemma~\ref{inclusionen}. We call an affine reflection system (un)twisted, if the corresponding extension datum is (un)twisted.
\end{defn}
\vspace{0,1cm}

\textbf{Mild Assumption:} \textit{For the rest of the paper we will assume that $\Lambda_\ell$ is also a subgroup in the case when $\mathring{\Phi}$ is of type $B_2$ or $BC_2$. For simplicity, we also want to exclude the situation when $\mathring{\Phi}$ is of rank one in the rest of the paper (without further comment).} Nevertheless, a similar classification can be done for types $A_1$ and $BC_1$, but involves many technical calculations.

\section{Subroot systems of affine reflection systems}\label{section3}Given an affine reflection system $\Phi$, it is natural to ask what are its subroot systems. In this section we give the necessary definitions and elementary properties of subroot systems. 
\subsection{}\label{31}Let $\Phi$ an affine reflection system. A proper non--empty subset $\Psi\subseteq \Phi^{\text{re}}$ of the real roots is called
\begin{enumerate}
\item a subroot system, if we have $s_{\alpha}(\beta)\in \Psi$ for all $\alpha,\beta\in\Psi$;
\item closed, if $\alpha,\beta\in \Psi$ and $\alpha+\beta\in \Phi^{\text{re}}$ implies $\alpha+\beta\in \Psi$;
\item maximal closed subroot system, if $\Psi$ is a closed subroot system and $\Psi\subseteq \Psi'\subsetneq \Phi^{\text{re}}$ implies $\Psi=\Psi'$ for all closed subroot systems $\Psi'$.
\end{enumerate}
\begin{rem}\label{automatic}Let $\Psi$ a closed subset of an affine reflection system $\Phi$ such that $\Psi=-\Psi$ and $s_{\alpha}(\beta)\in\Psi$ for all $\alpha,\beta\in \Psi$ with $\beta\pm\alpha\in\Phi^{\text{im}}$ or $\beta\pm 2\a\in\Phi^{\text{im}}$. Since all root strings in $\Phi$ are unbroken \cite[Corollary 5.2]{LN11} we automatically have that $\Psi$ is a subroot system. To see this, let $\alpha,\beta\in \Psi$ such that $(\beta,\alpha^{\vee})_X\in\bz_+$. If $\beta-s\alpha\in \Phi^{\text{im}}$ for some $s\in\mathbb{Z}_+$ we must have $s\in\{1,2\}$ and hence $s_{\alpha}(\beta)\in\Psi$. Otherwise $\beta-s\alpha\in \Phi^{\text{re}}$ for all $0\leq s\leq (\beta,\alpha^{\vee})_X$. Since $-\alpha\in\Psi$ we get by the closedness of $\Psi$ that $\beta-s\alpha\in \Psi$. Thus $s_{\alpha}(\beta)\in\Psi$. The case $-(\beta,\alpha^{\vee})_X\in\bz_+$ works similarly and we omit the details. We will use this fact in the rest of this paper without further comment.
\end{rem}
Given a subroot system $\Psi$, we define the gradient to be the subset
$$\text{Gr}(\Psi)=\{\alpha \in  \mathring{\Phi}: \exists \ y\in\Lambda_{\a} \text{ such that } \alpha\oplus y\in \Psi\}.$$
Further, for $\alpha\in \text{Gr}(\Psi)$ we set $Z_{\alpha}(\Psi)=\{y\in \Lambda_{\alpha}: \alpha\oplus y\in \Psi\}$.
It is clear that $Z_{\alpha}(\Psi)$ is non--empty and
$$\Psi=\{\alpha\oplus y: \alpha\in \text{Gr}(\Psi), y\in Z_{\alpha}(\Psi)\}.$$
It follows that the subroot system $\Psi$ is uniquely determined by $\text{Gr}(\Psi)$ and $Z_{\alpha}(\Psi), \alpha\in \text{Gr}(\Psi)$. 
\begin{lem}\label{lemm1}Let $\Phi$ an affine reflection system and $\Psi$ a subroot system. Then  we have that $\text{Gr}(\Psi)\subseteq \mathring{\Phi}$ is a subroot system of the affine reflection system $\mathring{\Phi}\cup \{0\}$. Moreover,
 $$Z_{\beta}(\Psi)-(\beta,\alpha^{\vee})Z_{\alpha}(\Psi)\subseteq Z_{s_{\alpha}(\beta)}(\Psi),\ \ \forall \alpha,\beta\in \text{Gr}(\Psi).$$
\proof
Let $\alpha,\beta\in \text{Gr}(\Psi)$, then we have $(\alpha\oplus y_{\alpha}),(\beta\oplus y_{\beta})\in \Psi$ for some $y_\alpha\in Z_{\alpha}(\Psi)$ and $y_\beta\in Z_{\beta}(\Psi)$. In order to show that $s_{\alpha}(\beta)\in \text{Gr}(\Psi)$ we note that
$$s_{\alpha\oplus y_{\alpha}}(\beta \oplus y_{\beta})=s_{\alpha}(\beta)\oplus (y_{\beta}-(\beta,\alpha^{\vee})y_{\alpha})\in\Psi.$$
But the definition of an extension datum (see Definition~\ref{extda}) gives $(y_{\beta}-(\beta,\alpha^{\vee})y_{\alpha})\in \Lambda_{s_{\alpha}(\beta)}$ and thus $s_{\alpha}(\beta)\in \text{Gr}(\Psi)$, proving that the gradient is a subroot system. The second part of the lemma also follows.
\endproof
\end{lem}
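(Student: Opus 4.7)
The plan is to verify both claims by a single reflection computation, namely by applying the reflection determined by one element of $\Psi$ to another. First I would fix $\alpha, \beta \in \text{Gr}(\Psi)$; by definition of the gradient and of $Z_\gamma(\Psi)$, there exist $y_\alpha \in Z_\alpha(\Psi)$ and $y_\beta \in Z_\beta(\Psi)$ with $\alpha \oplus y_\alpha$ and $\beta \oplus y_\beta$ both lying in $\Psi$. Since $\Psi$ is a subroot system, it is closed under the reflections $s_\gamma$ with $\gamma \in \Psi$, so in particular $s_{\alpha \oplus y_\alpha}(\beta \oplus y_\beta) \in \Psi$.

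The key computation is then to expand this reflection using the bilinear form $(\cdot,\cdot)_X$ on $X = V \oplus Y$ from \eqref{consaff}. Because $(\cdot,\cdot)_X$ vanishes identically on $Y$, one has $(\beta \oplus y_\beta, (\alpha \oplus y_\alpha)^{\vee})_X = (\beta, \alpha^\vee)$, and therefore
$$s_{\alpha \oplus y_\alpha}(\beta \oplus y_\beta) = s_\alpha(\beta) \oplus \bigl(y_\beta - (\beta,\alpha^\vee) y_\alpha\bigr).$$
This element lies in $\Psi$; its first coordinate is $s_\alpha(\beta)$, which is a genuine element of $\mathring{\Phi}$ since $\mathring{\Phi}$ is itself a root system. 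Consequently $s_\alpha(\beta) \in \text{Gr}(\Psi)$, so $\text{Gr}(\Psi)$ is a non-empty subset of the real roots of $\mathring{\Phi} \cup \{0\}$ closed under reflections, i.e.\ a subroot system in the sense of Section \ref{31}.

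For the second part, the same identity shows that the second coordinate $y_\beta - (\beta,\alpha^\vee) y_\alpha$ belongs to $Z_{s_\alpha(\beta)}(\Psi)$. Letting $y_\alpha$ range over $Z_\alpha(\Psi)$ and $y_\beta$ range over $Z_\beta(\Psi)$ yields at once the desired inclusion
$$Z_\beta(\Psi) - (\beta,\alpha^\vee) Z_\alpha(\Psi) \subseteq Z_{s_\alpha(\beta)}(\Psi).$$
I do not expect any real obstacle: the whole statement is a direct translation of the reflection closure of $\Psi$ under the decomposition $X = V \oplus Y$, together with the fact that $(\cdot,\cdot)_X$ ignores the $Y$-component. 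Note that the compatibility condition $y_\beta - (\beta,\alpha^\vee) y_\alpha \in \Lambda_{s_\alpha(\beta)}$ is already guaranteed by Definition \ref{extda}(1), so no additional verification at the level of the extension datum is required.
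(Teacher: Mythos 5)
Your proof is correct and follows essentially the same route as the paper: reflect $\beta\oplus y_\beta$ in $\alpha\oplus y_\alpha$ inside $\Psi$, read off the first coordinate $s_\alpha(\beta)$ for the gradient claim and the second coordinate $y_\beta-(\beta,\alpha^\vee)y_\alpha$ for the inclusion of the $Z$-sets, with Definition~\ref{extda}(1) guaranteeing membership in $\Lambda_{s_\alpha(\beta)}$. Your explicit remark that $(\cdot,\cdot)_X$ ignores the $Y$-component is a slightly more detailed justification of the reflection formula, but the argument is the same.
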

\subsection{}\label{32}We introduce a $\bz$--linear function 
$$p: \text{Gr}(\Psi)\rightarrow \bigcup_{\a\in \mathring{\Phi}}\Lambda_{\a},\ \alpha\mapsto p_{\alpha}$$
as follows. By Lemma~\ref{lemm1} and \cite[Theorem 10.1]{Hu80} we can choose a simple system $\Pi\subseteq \text{Gr}(\Psi)$ and arbitrary $p_{\gamma}\in Z_{\gamma}(\Psi)$ for each $\gamma\in \Pi$. We extend this $\bz$--linearly and obtain
\begin{equation}\label{2}
p_{\beta}-(\beta,\alpha^{\vee})p_{\alpha}=p_{s_{\alpha}(\beta)}.
\end{equation} 
Now it is easy to see with \eqref{2} that $p_{\alpha}\in Z_{\alpha}(\Psi)$ for all $\alpha$ in the $W(\text{Gr}(\Psi))$--orbit of $\Pi$; equivalently $p_{\alpha}\in Z_{\alpha}(\Psi)$ for all $\alpha\in \text{Gr}(\Psi)\cap \mathring{\Phi}_{nd}$. We set $Z_{\alpha}=Z_{\alpha}(\Psi)-p_{\alpha}$ for $\alpha\in \text{Gr}(\Psi)$ and observe that $0\in Z_{\alpha}$ for all $\alpha\in \text{Gr}(\Psi)\cap \mathring{\Phi}_{nd}$. 
 \begin{prop}\label{prop2a}Let $\Psi$ a subroot system of an affine reflection system $\Phi$. Then the collection $\mathcal{Z}=(Z_0:=0, Z_{\alpha}, \alpha\in \text{Gr}(\Psi))$ is an extension datum and the affine reflection system constructed from $\mathcal{Z}$ and $\text{Gr}(\Psi)$ as in \eqref{consaff} is isomorphic to $\Psi\cup\{0\}$.
 \proof In view of \eqref{2} and $0\in Z_{\alpha}$ for all $\alpha\in \text{Gr}(\Psi)\cap \mathring{\Phi}_{nd}$ the first part of the claim follows from Lemma~\ref{lemm1}. The second part is straightforward to check.
\endproof
 \end{prop}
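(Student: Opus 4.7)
The plan is to verify the three axioms of Definition \ref{extda} for the collection $\mathcal{Z}=(Z_0,Z_\alpha : \alpha\in\text{Gr}(\Psi))$, taking $Y':=\text{span}_{\mathbb{R}}(\bigcup_\alpha Z_\alpha)$ as the target vector space, and then to exhibit the isomorphism by an explicit translation.

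For axiom (1), I would simply expand the definitions:
\begin{equation*}
Z_\beta - (\beta,\alpha^\vee)Z_\alpha \;=\; \bigl[Z_\beta(\Psi) - (\beta,\alpha^\vee)Z_\alpha(\Psi)\bigr] \;-\; \bigl[p_\beta - (\beta,\alpha^\vee)p_\alpha\bigr].
\end{equation*}
By Lemma \ref{lemm1} the first bracket sits inside $Z_{s_\alpha(\beta)}(\Psi)$, and by \eqref{2} the second bracket equals $p_{s_\alpha(\beta)}$. Subtracting gives a subset of $Z_{s_\alpha(\beta)}(\Psi)-p_{s_\alpha(\beta)}=Z_{s_\alpha(\beta)}$, as required. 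Axiom (2) is immediate: the paragraph preceding the proposition already records that $p_\alpha\in Z_\alpha(\Psi)$ for every $\alpha\in\text{Gr}(\Psi)\cap\mathring{\Phi}_{nd}$, so $0\in Z_\alpha$ there, while for $\alpha\in\text{Gr}(\Psi)\cap\mathring{\Phi}_{d}$ the set $Z_\alpha(\Psi)$ is nonempty by definition of the gradient and hence so is its translate $Z_\alpha$. Axiom (3) holds by the very choice of $Y'$.

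For the second half, I would construct the isomorphism as follows. Let $V'=\text{span}_{\mathbb{R}}(\text{Gr}(\Psi))\subseteq V$, so that the constructed system lives in $X':=V'\oplus Y'$, and let $X_\Psi\subseteq X$ be the real span of $\Psi\cup\{0\}$. The assignment $\gamma\mapsto p_\gamma$ on a simple system $\Pi\subseteq\text{Gr}(\Psi)$ extends uniquely to an $\mathbb{R}$-linear map $\widetilde{p}:V'\to Y$, and the formula \eqref{2} forces $\widetilde{p}(\alpha)=p_\alpha$ on all of $\text{Gr}(\Psi)$. Then the vector space map
\begin{equation*}
f:X'\longrightarrow X_\Psi,\qquad f(v\oplus y) = v\oplus(y+\widetilde{p}(v)),
\end{equation*}
is an $\mathbb{R}$-linear isomorphism sending $\alpha\oplus Z_\alpha$ bijectively onto $\alpha\oplus Z_\alpha(\Psi)$, hence the constructed real roots bijectively onto $\Psi$. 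Since $f$ fixes the $V'$-component, it preserves both the radical (i.e.\ $(X')^0=Y'$ maps to $X_\Psi\cap X^0$) and the bilinear form $(\cdot,\cdot)_X$, which depends only on the $V$-component. Thus $f$ is an isomorphism of affine reflection systems.

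I do not expect a serious obstacle here: the first axiom is a direct rearrangement, and the isomorphism is essentially the identity on the gradient direction twisted by the linear extension $\widetilde{p}$. The only point requiring mild care is checking that the linear extension $\widetilde{p}$ from $\Pi$ to $V'$ really does recover $p_\alpha$ on the full gradient (not just the $W(\text{Gr}(\Psi))$-orbit of $\Pi$), which is exactly what \eqref{2} is designed to ensure.
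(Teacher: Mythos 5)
Your proposal is correct and follows the same route the paper intends: axiom (1) of Definition~\ref{extda} via Lemma~\ref{lemm1} combined with \eqref{2}, axiom (2) from $p_\alpha\in Z_\alpha(\Psi)$ for $\alpha\in\text{Gr}(\Psi)\cap\mathring{\Phi}_{nd}$, and the isomorphism by the shear $v\oplus y\mapsto v\oplus(y+\widetilde{p}(v))$, which is precisely the verification the paper dismisses as straightforward. You have merely written out the details the paper leaves to the reader, so there is nothing to correct.
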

 Note that $\text{Gr}(\Psi)$ is not neccessarily irreducible, but we can decompose $\text{Gr}(\Psi)$ into its irreducible components
 \begin{equation}\label{irrdec}
\text{Gr}(\Psi)=\text{Gr}(\Psi)^1\cup \cdots \cup\text{Gr}(\Psi)^k.
 \end{equation}
 From Proposition~\ref{prop2a} we know that $\mathcal{Z}^i=(Z_{\alpha}, \alpha\in \text{Gr}(\Psi)^i\cup\{0\})$ is also an extension datum. As a corollary, we can apply the results of Section~\ref{section2} and obtain that there are at most 3 possible subsets $Z^i_s, Z^i_{\ell}$ and $Z^i_d$  for each connected component (we define as usual $Z_s^i:=Z_{\alpha}, \alpha\in \text{Gr}(\Psi)^i\cap \mathring{\Phi}_s$ etc.) satisfying the relations stated in Lemma~\ref{inclusionen}. The following lemma will be needed later.
 \begin{lem}\label{lem3}Let $\Phi$ an untwisted irreducible affine reflection system and $\Psi$ a closed subroot system (resp. maximal closed subroot system) of $\Phi$. Then we have that $\text{Gr}(\Psi)$ is a closed subroot system (resp. maximal closed subroot system) and
 $$Z_{\alpha}=Z_{\beta},\ \forall \alpha,\beta\in \text{Gr}(\Psi)^i,\ 1\leq i\leq k.$$
 \proof  We already know from Lemma~\ref{lemm1} that  $\text{Gr}(\Psi)$ is a subroot system. In order to show that $\text{Gr}(\Psi)$ is closed, let $\alpha\in \mathring{\Phi}_a\cap \text{Gr}(\Psi)	,\ \beta\in\mathring{\Phi}_b\cap \text{Gr}(\Psi)$ such that $\alpha+\beta\in \mathring{\Phi}_c$, where $a,b,c\in\{s,\ell,d\}$. We shall prove that $\alpha+\beta\in \text{Gr}(\Psi)$ by showing that $(\alpha+\beta)\oplus (y_{\alpha}+y_{\beta})\in \Phi^{\text{re}}$, where $y_\alpha\in Z_{\alpha}(\Psi)$ and $y_\beta\in Z_{\beta}(\Psi)$ are such that $(\alpha\oplus y_{\alpha}),(\beta\oplus y_{\beta})\in \Psi$. But this follows from Lemma~\ref{inclusionen}, since $\Lambda_s=\Lambda_\ell=\Lambda_d$ is a subgroup. Now assume that $\Psi$ is in addition maximal and $\text{Gr}(\Psi)\subseteq \mathring{\Phi}'\subsetneq \mathring{\Phi}$, where $\mathring{\Phi}'$ is a closed subroot system. Then we define
 $$\Psi'=\{\alpha\oplus y: \alpha\in \mathring{\Phi}',\ y\in \Lambda_{\alpha}\}.$$
 Obviously $\Psi\subseteq \Psi'\subsetneq \Phi$ and $\Psi'$ is a closed subroot system. By the maximality of $\Psi$ we obtain $\text{Gr}(\Psi)=\mathring{\Phi}'$ and hence $\text{Gr}(\Psi)$ is maximal. For the remaining part of the claim it is enough by Lemma~\ref{inclusionen} and the discussion preceeding the lemma to show that $Z^i_s\subseteq Z^i_{x}$, where $x=\ell$ in the reduced case and $x=d$ otherwise. This is clear if $\text{Gr}(\Psi)^i$ is of simply--laced type. Otherwise we can choose $\alpha,\beta$ to be short roots in $\text{Gr}(\Psi)^i$ such that $\alpha+\beta$ is a long root (resp. divisible root in the non--reduced case) by Lemma~\ref{hum}(2). Thus we get
$$Z^i_s+Z^i_s=Z^i_{\alpha}+Z^i_{\beta}\subseteq Z^i_{\alpha+\beta}=Z^i_{x},$$
because if $y_{\alpha}\in Z^i_{\alpha}(\Psi)$, $y_{\beta}\in Z^i_{\beta}(\Psi)$ (defined in the ovious way), then 
$$(\alpha\oplus y_{\alpha})+(\beta\oplus y_{\beta})=(\alpha+\beta)\oplus (y_{\alpha}+y_{\beta})\in\Phi^{\text{re}}$$
which implies $(\alpha+\beta)\oplus (y_{\alpha}+y_{\beta})\in\Psi$ and hence $y_{\alpha}+y_{\beta}\in Z^i_{\alpha+\beta}(\Psi)$ and the claim follows. 
\endproof
\end{lem}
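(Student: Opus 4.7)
The plan is to establish the three assertions in sequence. The key simplification from the untwisted and irreducible hypotheses is that the extension datum of $\Phi$ collapses to a single subgroup $\Lambda = \Lambda_s = \Lambda_\ell = \Lambda_d$ of $Y$: the definition of untwisted supplies $\Lambda_s\subseteq\Lambda_d\subseteq\Lambda_\ell$, while the reverse inclusions are immediate from the relations in Lemma~\ref{inclusionen} together with $0\in\Lambda_s$ and $0\in\Lambda_\ell$, and $\Lambda$ is a subgroup thanks to Lemma~\ref{inclusionen} combined with the mild assumption excluding rank one and covering $B_2$, $BC_2$.

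For the closedness of $\text{Gr}(\Psi)$, I would take $\alpha,\beta\in\text{Gr}(\Psi)$ with $\alpha+\beta\in\mathring{\Phi}$, lift to $\alpha\oplus y_\alpha,\,\beta\oplus y_\beta\in\Psi$, and observe that $y_\alpha+y_\beta\in\Lambda=\Lambda_{\alpha+\beta}$. Hence $(\alpha+\beta)\oplus(y_\alpha+y_\beta)\in\Phi^{\text{re}}$, and closedness of $\Psi$ puts this element in $\Psi$, giving $\alpha+\beta\in\text{Gr}(\Psi)$. For maximality, I would use the standard enlargement trick: if $\text{Gr}(\Psi)\subseteq\mathring{\Phi}'\subsetneq\mathring{\Phi}$ with $\mathring{\Phi}'$ closed, form $\Psi':=\bigcup_{\alpha\in\mathring{\Phi}'}\alpha\oplus\Lambda_\alpha$. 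Using that $\Lambda$ is a subgroup and $\mathring{\Phi}'$ is a closed subroot system, $\Psi'$ is checked to be a closed subroot system of $\Phi$ with $\Psi\subseteq\Psi'\subsetneq\Phi^{\text{re}}$, so maximality of $\Psi$ forces $\Psi=\Psi'$ and thus $\text{Gr}(\Psi)=\mathring{\Phi}'$.

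For the equality $Z_\alpha=Z_\beta$ within a fixed irreducible component $\text{Gr}(\Psi)^i$, I would work with the extension datum $(Z^i_s,Z^i_\ell,Z^i_d)$ on $\text{Gr}(\Psi)^i$ provided by Proposition~\ref{prop2a}. Applying Lemma~\ref{inclusionen} to this datum, together with the fact that $0\in Z^i_s$ and $0\in Z^i_\ell$ (short and long roots being non-divisible), immediately yields one chain $Z^i_d\subseteq Z^i_\ell\subseteq Z^i_s$. For the reverse direction I would invoke Lemma~\ref{hum}(2) to produce short roots $\alpha,\beta\in\text{Gr}(\Psi)^i$ whose sum is long, or divisible in the non-reduced case. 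Since $\text{Gr}(\Psi)$ is now known to be closed, $\alpha+\beta$ lies in the same component $\text{Gr}(\Psi)^i$, and lifting to $\Psi$ and applying the closure of $\Psi$ yields $Z^i_s+Z^i_s\subseteq Z^i_{\alpha+\beta}$; invoking $0\in Z^i_s$ once more gives $Z^i_s\subseteq Z^i_{\alpha+\beta}$, completing the chain of equalities.

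The main subtlety I expect to navigate is the bookkeeping around the shift defining $Z_\alpha=Z_\alpha(\Psi)-p_\alpha$: transferring an additive relation obtained at the level of $\Psi$ (which lives in $Z_\bullet(\Psi)$) to one at the level of $Z_\bullet$ requires $p_{\alpha+\beta}=p_\alpha+p_\beta$ whenever $\alpha+\beta\in\text{Gr}(\Psi)$, which is built into the $\mathbb{Z}$-linear construction of $p$ from a simple system in Section~\ref{32} but must be applied consistently. A secondary point is to verify, for the non-reduced sub-case, that the short roots provided by the final clause of Lemma~\ref{hum}(2) can be taken inside $\text{Gr}(\Psi)^i$ rather than merely inside $\mathring{\Phi}$; this is automatic because Lemma~\ref{hum}(2) is a statement about any irreducible non-simply-laced root system, and $\text{Gr}(\Psi)^i$ is such by construction.
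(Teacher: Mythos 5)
Your proposal is correct and follows essentially the same route as the paper: the untwisted hypothesis collapses the extension datum to a single subgroup $\Lambda$, closedness of $\text{Gr}(\Psi)$ comes from lifting a sum to $\Phi^{\text{re}}$ and using closedness of $\Psi$, maximality comes from the same enlargement $\Psi'=\{\alpha\oplus y:\alpha\in\mathring{\Phi}',y\in\Lambda_\alpha\}$, and the equality of the $Z^i_\bullet$ is obtained exactly as in the paper by combining Lemma~\ref{inclusionen} with Lemma~\ref{hum}(2). Your two flagged subtleties (the $\mathbb{Z}$--linearity of $p$ when transferring relations from $Z_\bullet(\Psi)$ to $Z_\bullet$, and applying Lemma~\ref{hum}(2) inside the component) are handled implicitly in the paper in the same way, so there is no substantive difference.
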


\subsection{}\label{33}It can happen that the gradient of a maximal closed subroot system is not closed in general, but still we will have some control. The following definition is motivated by Lemma~\ref{lem3}. 
\begin{defn}\label{semicl}We call a subroot system $\mathring{\Phi}'$ of $\mathring{\Phi}$ semi--closed, if it is not closed and $\alpha,\beta\in \mathring{\Phi}'$ such that $\alpha+\beta\in \mathring{\Phi}\backslash \mathring{\Phi}'$ implies one of the following situations
\begin{itemize}
\item $\alpha,\beta \in \mathring{\Phi}_s,\ \ \alpha+\beta\in\mathring{\Phi}_\ell \cup \mathring{\Phi}_d.$
\item $\alpha,\beta \in \mathring{\Phi}_\ell,\ \ \alpha+\beta\in \mathring{\Phi}_d.$
\end{itemize}

Moreover, we call $\mathring{\Phi}'$ maximal semi--closed if it is semi--closed and $\mathring{\Phi}'$ is not properly contained in a proper closed subroot system of $\mathring{\Phi}$.
\end{defn}
In particular, if $\Phi$ is an irreducible affine reflection system and $\Psi$ a closed subroot system, then $\text{Gr}(\Psi)$ must be closed or semi--closed. To see this, assume that it is not closed, i.e. there exists $\alpha,\beta\in \text{Gr}(\Psi)$ such that $\alpha+\beta\in \mathring{\Phi}\backslash \text{Gr}(\Psi)$ (recall the possibilities from Lemma~\ref{hum}). Let $y_{\alpha}\in Z_\a(\Psi), y_{\beta}\in Z_\beta(\Psi)$ such that $(\alpha\oplus y_{\alpha})\in \Psi$ and $(\beta\oplus y_{\beta})\in \Psi$, but $(\alpha+\beta)\oplus (y_{\alpha}+y_{\beta})\notin \Phi^{\text{re}}$. By Lemma~\ref{inclusionen} this is only possible if both roots are short and the sum is a long (resp. divisible) root or both roots are long and the sum is a divisible root. Moreover, if $\Psi$ is a maximal closed subroot system such that $\text{Gr}(\Psi)$ is semi--closed the same proof as in Lemma~\ref{lem3} shows that $\text{Gr}(\Psi)$ is in fact maximal semi--closed.

\begin{example}\label{ex2}
If $\Psi$ is a closed subroot system of an affine root system as explained in Example~\ref{ex1}(2), we have that $\text{Gr}(\Psi)$ is closed if $m=1$. This follows immediately from Lemma~\ref{lem3}. If $m>1$ it can also be semi--closed. For example, in the case $(A_{2n-1},2)$ we have that $\mathring{\Phi}$ is of type $C_n$. Let $\alpha_n$ be the unique long root. Then
$$\Psi_1:=\{\alpha\oplus 2\bz \delta,\ \alpha\in\mathring{\Phi}\},\ \ \Psi_2:=\{\pm \alpha_{n-1}\oplus 2\bz \delta,\ \pm (\alpha_{n-1}+\alpha_n)\oplus (2\bz+1)\delta \}$$
are closed subroot system and $\text{Gr}(\Psi_1)=\mathring{\Phi}$ is closed whereas 
$\text{Gr}(\Psi_2)$ is only semi--closed. 
\end{example}
\subsection{}We have seen in the previous subsection the importance of maximal closed and maximal semi--closed subroot systems of (finite) root systems. Borel and de Siebenthal classified the maximal closed subroot systems \cite{BdS} in the case when $\mathring{\Phi}$ is reduced. The concern with closed subroot systems of $\mathring{\Phi}$ arises from the theory of finite--dimensional semi--simple Lie algebras, namely determining the closed subroot systems of a root system amounts to determining the semi--simple Lie subalgebras of a semi--simple Lie algebra. Their main result of Borel and de Siebenthal can be summarized as follows.
\begin{thm}\label{BdS}Let $\mathring{\Phi}$ be a reduced irreducible root system with fundamental system $\{\alpha_1,\dots,\alpha_n\}$ and highest root $$\alpha_0=\sum_{i=1}^n a_i\alpha_i.$$ The maximal closed subroot systems (up to $W(\mathring{\Phi})$--conjugacy) are those with fundamental system
\begin{enumerate}
\item $\{\alpha_1,\dots,\widehat{\alpha_i},\dots,\alpha_n\}$, where $a_i=1$ or
\item $\{-\alpha_0,\alpha_1,\dots,\widehat{\alpha_i},\dots,\alpha_n\}$, where $a_i$ is prime.
\end{enumerate}\hfill\qed
\end{thm}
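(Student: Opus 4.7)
The plan is to analyze a maximal closed subroot system $\Psi\subseteq\mathring{\Phi}$ through its own base, conjugated via $W(\mathring{\Phi})$ into the extended fundamental system $\tilde{\Pi}:=\Pi\cup\{-\alpha_0\}$. Any closed symmetric subset of a finite root system is itself a root system in its $\mathbb{R}$-span, so $\Psi$ admits a simple system $\Pi_\Psi=\{\beta_1,\dots,\beta_m\}$. The extended Dynkin diagram on $\tilde{\Pi}$, with labels $(1,a_1,\dots,a_n)$, is the natural combinatorial object parametrizing the answer.

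The central normalization lemma is: every such $\Pi_\Psi$ is $W(\mathring{\Phi})$-conjugate to a subset of $\tilde{\Pi}$. I would prove this by a standard alcove argument. Pick $v$ in the interior of the positive chamber of $W(\Psi)$, so $(v,\beta_j)>0$ for all $j$, and transport $v$ by the affine Weyl group $\widehat{W}=W(\mathring{\Phi})\ltimes Q^\vee$ into the closure of the fundamental alcove bounded by the walls $(x,\alpha_i)=0$, $1\le i\le n$, and $(x,\alpha_0)=1$. Linear independence of $\Pi_\Psi$ and non-negativity of each $(v,\beta_j)$ force the transported roots to be precisely the ones defining walls of this alcove, i.e.\ elements of $\tilde{\Pi}$; the translation part of $\widehat{W}$ is absorbed by letting $-\alpha_0$ substitute for a simple root, while the linear part is recorded in the element of $W(\mathring{\Phi})$ effecting the conjugation.

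Assuming $\Pi_\Psi\subseteq\tilde{\Pi}$, I would split on whether $-\alpha_0\in\Pi_\Psi$. If not, maximality forces $\Pi_\Psi=\Pi\setminus\{\alpha_i\}$ for a unique $i$; analyzing roots by their $\alpha_i$-coordinate (``level'') gives maximality iff $a_i=1$, because for $a_i\ge 2$ the roots of $\alpha_i$-coordinate in $\{-a_i,0\}$ form a strictly intermediate closed subsystem, while for $a_i=1$ any root outside $\Psi$ has level $\pm 1$ and closure under $W(\Psi)$ forces recovery of all of $\mathring{\Phi}$. This yields case~(1). If $-\alpha_0\in\Pi_\Psi$, then $\Pi_\Psi=\tilde{\Pi}\setminus\{\alpha_i\}$ generates the closed subsystem of roots with $\alpha_i$-coordinate in $a_i\bz$, which is proper iff $a_i\ge 2$; this yields case~(2).

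The main obstacle is pinning down maximality in case~(2) as equivalent to $a_i$ being prime. For the easy direction, a composite factorization $a_i=pq$ with $1<p<a_i$ produces a strictly intermediate closed subsystem via the roots with $\alpha_i$-coordinate in $p\bz$. For the converse, if $a_i$ is prime then any closed $\Psi'\supsetneq\Psi$ must contain a root of $\alpha_i$-level coprime to $a_i$; one then has to argue via explicit root-string analysis and repeated closure under $W(\Psi)$ that $\Psi'$ acquires a root of level $\pm 1$, which forces $\Psi'=\mathring{\Phi}$. Carrying out this lattice-generation step cleanly, together with a careful verification of the alcove normalization lemma, is where the bulk of the technical work will lie.
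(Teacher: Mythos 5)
The paper does not prove this theorem (it is quoted from \cite{BdS} with the proof omitted), so your sketch has to stand on its own, and it has a genuine gap at its central step. The ``normalization lemma'' as you state it --- every simple system of a closed subroot system is $W(\mathring{\Phi})$-conjugate to a subset of $\tilde{\Pi}$ --- is false for general closed subroot systems. For instance, the closed subsystem of type $4A_1$ spanned by $\pm\epsilon_1\pm\epsilon_2,\ \pm\epsilon_3\pm\epsilon_4$ in $B_4$ (or $2D_4\subset D_8\subset E_8$) is closed and symmetric, but no $W$-conjugate of a base of it lies in the extended fundamental system; such subsystems require two iterations of the Borel--de Siebenthal construction. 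So the lemma can only be true with maximality built into the hypothesis, and in that form it carries essentially the entire content of the theorem. Moreover, the alcove argument you propose does not prove it: choosing $v$ regular dominant for $W(\Psi)$ and moving it into the fundamental alcove of $\widehat{W}=W(\mathring{\Phi})\ltimes Q^\vee$ creates no link between the roots $\beta_j\in\Pi_\Psi$ and the walls of the alcove, because the walls are the affine hyperplanes $(x,\alpha)=k$ with $k\in\bz$ and nothing in your setup makes the values $(v,\beta_j)$ integral; positivity and linear independence alone cannot ``force the transported roots to be precisely the walls.'' That argument is valid only for subsystems of the form $\Psi_x=\{\alpha\in\mathring{\Phi}:(\alpha,x)\in\bz\}$.

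The missing idea is precisely how maximality produces such an $x$. The standard route is: if $\mathrm{span}_{\br}\Psi\neq V$, then $\Psi\subseteq\mathring{\Phi}\cap\mathrm{span}_{\br}\Psi$, which is closed, so maximality gives equality and one lands (after conjugation) in case (1) with $a_i=1$. If $\Psi$ has full rank, then $Q(\Psi)$ has finite index in $Q(\mathring{\Phi})$; pick a prime $p$ dividing the index and a subgroup $M\supseteq Q(\Psi)$ of index $p$. Then $\mathring{\Phi}\cap M$ is a proper closed subsystem containing $\Psi$, so $\Psi=\mathring{\Phi}\cap M$, and the quotient map $Q(\mathring{\Phi})\to\bz/p\bz$ is realized by a rational coweight $x$ with $\Psi=\Psi_x$. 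Only at this point does the alcove/Kac-coordinate argument apply and conjugate $x$ to a vertex of the fundamental alcove, identifying $\Psi$ with the subsystem generated by $\tilde{\Pi}\setminus\{\alpha_i\}$ and tying $p$ to the label $a_i$ (whence $a_i$ prime). Your endgame analysis of the two cases is the right shape (modulo the small slip that the intermediate subsystem in case (1) should be the roots with $\alpha_i$-coordinate in $\{0,\pm a_i\}$; the set $\{-a_i,0\}$ is not symmetric, hence not a subroot system), but without the reduction above the proof does not get off the ground.
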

The above theorem also implies how the maximal closed subroot systems of non--reduced irreducible root systems look like. If $\mathring{\Phi}$ is a non--reduced irreducible root system and $\mathring{\Phi}'$ a maximal closed subroot system, one can easily check the following two facts
\begin{itemize}
\item $(\mathring{\Phi}'\cup \mathring{\Phi}_d)$ is a proper closed subset of $\mathring{\Phi}$ and hence the maximality gives $\mathring{\Phi}_d\subseteq \mathring{\Phi}'$.
\item $\mathring{\Phi}'\cap (\mathring{\Phi}_s\cup \mathring{\Phi}_\ell)$ is a maximal closed subroot system in the reduced root system $(\mathring{\Phi}_s\cup \mathring{\Phi}_\ell)$.
\end{itemize} So we obtain the following.
\begin{cor}\label{BdSnr}
Let $\mathring{\Phi}$ be a non--reduced irreducible root system. The maximal closed subroot systems (up to $W(\mathring{\Phi})$--conjugacy) are those 
$$\mathring{\Phi}_d\cup \mathring{\Phi}',$$
where $\mathring{\Phi}'$ is a maximal closed subroot system of the reduced root system $(\mathring{\Phi}_s\cup \mathring{\Phi}_\ell)$.
\hfill\qed
\end{cor}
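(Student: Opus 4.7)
Since $\mathring{\Phi}$ is non--reduced irreducible we take $\mathring{\Phi}=BC_n$ with short roots $\pm\epsilon_i$, long roots $\pm\epsilon_i\pm\epsilon_j$, and divisible roots $\pm 2\epsilon_i$; thus $\mathring{\Phi}_s\cup\mathring{\Phi}_\ell=B_n$. The plan is to establish the two bulleted facts asserted just before the corollary, after which the classification follows at once: any maximal closed $\mathring{\Phi}'$ of $BC_n$ equals $\mathring{\Phi}_d\cup(\mathring{\Phi}'\cap B_n)$ with $\mathring{\Phi}'\cap B_n$ maximal closed in $B_n$; conversely $\mathring{\Phi}_d\cup\Xi$ is maximal closed in $BC_n$ for every maximal closed $\Xi$ of $B_n$ by a sandwich argument, and Theorem~\ref{BdS} applied to $B_n$ yields the explicit list.

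\emph{First bullet.} Let $\mathring{\Phi}'$ be maximal closed in $BC_n$ and set $J:=\{j:\pm 2\epsilon_j\in\mathring{\Phi}'\}$; the goal is $J=\{1,\dots,n\}$. First I establish a coordinate splitting
\[
\mathring{\Phi}' \;=\; \mathring{\Phi}'_J \;\sqcup\; \mathring{\Phi}'_{J^c},
\]
where $\mathring{\Phi}'_J\subseteq BC_J$ is supported on $J$-coordinates and $\mathring{\Phi}'_{J^c}\subseteq D_{J^c}$ consists only of long roots supported on $J^c$. Two closedness observations suffice: a short root $\pm\epsilon_j\in\mathring{\Phi}'$ gives $\pm 2\epsilon_j=\pm\epsilon_j\pm\epsilon_j\in\mathring{\Phi}'$, so $j\in J$; and if $\pm\epsilon_i\pm\epsilon_j\in\mathring{\Phi}'$ with $i\in J$, then $s_{2\epsilon_i}\in W(\mathring{\Phi}')$ yields $\mp\epsilon_i\pm\epsilon_j\in\mathring{\Phi}'$, whose sum with $\pm\epsilon_i\pm\epsilon_j$ equals $\pm 2\epsilon_j\in\mathring{\Phi}'$, forcing $j\in J$. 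Now if $J^c\neq\emptyset$, define
\[
\widetilde{\Xi} \;:=\; \mathring{\Phi}'_J \,\cup\, C_{J^c}, \qquad C_{J^c}\;=\;D_{J^c}\cup\{\pm 2\epsilon_j:j\in J^c\}.
\]
Since $\mathring{\Phi}'_J$ and $C_{J^c}$ are supported on disjoint coordinates, all cross-sums fail to be roots and all cross-reflections are trivial; combined with the closedness of each block, $\widetilde{\Xi}$ is a closed subroot system of $BC_n$. It strictly contains $\mathring{\Phi}'$ (it acquires the divisible roots $\pm 2\epsilon_j$ for $j\in J^c$) and is strictly contained in $BC_n$ (the short roots $\pm\epsilon_j$ with $j\in J^c$ are absent). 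This contradicts maximality of $\mathring{\Phi}'$, so $J^c=\emptyset$ and $\mathring{\Phi}_d\subseteq\mathring{\Phi}'$.

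\emph{Second bullet.} Set $\mathring{\Phi}^*:=\mathring{\Phi}'\cap B_n$; closedness in $B_n$ is immediate, as sums and reflections of $B_n$-roots stay in $B_n$. If $\mathring{\Phi}^*\subsetneq\Xi\subsetneq B_n$ for some closed subroot system $\Xi$, enlarge $\Xi$ to be maximal in $B_n$. By Theorem~\ref{BdS}, $\Xi$ is $W(B_n)$-conjugate to one of $B_{n-1}$, $D_k\times B_{n-k}$, or $D_n$; each of these standard representatives is stable under every sign flip $s_{\epsilon_i}$ (each type permits arbitrary independent sign changes of coordinates), and this stability is $W(B_n)$-conjugation-invariant. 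This $s_{\epsilon_i}$-stability is exactly what makes $\widetilde{\Xi}:=\Xi\cup\mathring{\Phi}_d$ closed in $BC_n$: the only delicate sum case is $\alpha+(\pm 2\epsilon_i)$ with $\alpha\in\Xi$ a long root, which equals $s_{\epsilon_i}(\alpha)\in\Xi$. Then $\mathring{\Phi}'\subseteq\widetilde{\Xi}\subsetneq BC_n$ contradicts maximality of $\mathring{\Phi}'$, so $\mathring{\Phi}^*=\Xi$ is maximal closed in $B_n$.

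The main obstacle is the first bullet: one would hope to verify directly that $\mathring{\Phi}'\cup\mathring{\Phi}_d$ is closed in $BC_n$, but this fails because a long root $\alpha\in\mathring{\Phi}'$ together with $2\epsilon_i\in\mathring{\Phi}_d\setminus\mathring{\Phi}'$ can sum to the long root $s_{\epsilon_i}(\alpha)$, which need not lie in $\mathring{\Phi}'\cup\mathring{\Phi}_d$. The coordinate-splitting bypasses this by producing the alternative closed extension $\widetilde{\Xi}$ in which the problematic sums vanish thanks to coordinate disjointness between the two blocks.
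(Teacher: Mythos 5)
Your proposal is correct, and it follows the same two-step skeleton the paper uses (the two bulleted facts stated just before the corollary), but the actual verifications are genuinely different. For the first fact the paper asserts that $\mathring{\Phi}'\cup \mathring{\Phi}_d$ is itself a proper closed subset, "easily checked"; you rightly observe that a direct check of closedness is the delicate point (for a non-maximal closed $\mathring{\Phi}'$, e.g.\ $\{\pm(\epsilon_1-\epsilon_2)\}$ in $BC_2$, the union is genuinely not closed, since $(\epsilon_1-\epsilon_2)+2\epsilon_2=\epsilon_1+\epsilon_2$ escapes), and you bypass it by the coordinate splitting along $J=\{j:\pm 2\epsilon_j\in\mathring{\Phi}'\}$ and the alternative closed envelope $\mathring{\Phi}'_J\cup C_{J^c}$, which cleanly contradicts maximality when $J^c\neq\emptyset$; an equivalent repair of the paper's route would be to show, by the same support and parity analysis, that the closed subroot system generated by $\mathring{\Phi}'\cup\mathring{\Phi}_d$ stays inside $BC_J\cup C_{J^c}$ and is therefore proper. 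For the second fact, which the paper leaves entirely to the reader, you invoke the explicit Borel--de Siebenthal list for $B_n$ together with stability under all sign changes $s_{\epsilon_i}$ (a $W(B_n)$-conjugation-invariant property, since $W(B_n)$ normalizes the sign-change subgroup) to see that $\Xi\cup\mathring{\Phi}_d$ is closed in $BC_n$; this is a legitimate use of Theorem~\ref{BdS}, on which the corollary rests anyway. What your version buys is a self-contained argument that makes explicit where maximality enters; what the paper's terser version buys is brevity, at the cost of leaving precisely the nontrivial closedness checks unstated. Two small loose ends in your write-up are routine but worth a line each: $\mathring{\Phi}'\cap B_n$ is nonempty and proper (if it were empty, $\mathring{\Phi}'=\mathring{\Phi}_d$ sits inside the proper closed system $C_n$, contradicting maximality; if it were all of $B_n$, closedness forces $\mathring{\Phi}'=BC_n$), and the converse "sandwich argument" should be spelled out: if $\Xi\cup\mathring{\Phi}_d\subseteq\Psi\subsetneq BC_n$ with $\Psi$ closed, then $\Psi\cap B_n$ is closed in $B_n$, cannot equal $B_n$, hence equals $\Xi$ by maximality, so $\Psi=\Xi\cup\mathring{\Phi}_d$.
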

\subsection{} The rest of this section is dedicated to the study of the gradient, when the affine reflection system is twisted. We record the following important property of maximal semi--closed subroot systems.

\begin{lem}\label{shortarethere}Let $\mathring{\Phi}$ be a reduced irreducible root system and $\mathring{\Phi}'$ a maximal semi--closed subroot systems. Then 
\begin{equation}\label{conclg}\mathring{\Phi}'\cup \{\gamma\in \mathring{\Phi}_{\ell}: \gamma\in (\mathring{\Phi}'\cap \mathring{\Phi}_s)+(\mathring{\Phi}'\cap\mathring{\Phi}_s)\}
\end{equation}
is the closed subroot system generated by $\mathring{\Phi}'$, i.e. the closed subroot system which contains $\mathring{\Phi}'$ and is minimal with this property. In particular,
we obtain $\mathring{\Phi}_s\subseteq \mathring{\Phi}'$.
\proof Denote by $<\mathring{\Phi}'>$ the closed subroot system generated by $\mathring{\Phi}'$ and let $\mathring{\Phi}'(1)$ denote the subset \eqref{conclg}. It is clear that $<\mathring{\Phi}'>$ contains $\mathring{\Phi}'(1)$ and equality would follow by Remark~\ref{automatic} if we can show that $\mathring{\Phi}'(1)$ is a closed subset in $\mathring{\Phi}$. This will be the aim for the rest of the proof.

\vspace{0,2cm}
\textit{Case 1:} Suppose that $\gamma=(\tau_1+\tau_2)\in\mathring{\Phi}_{\ell},$ where $\tau_1,\tau_2\in  (\mathring{\Phi}'\cap \mathring{\Phi}_s)$ and $\tau_3\in \mathring{\Phi}'$ are such that $(\gamma+\tau_3)\in \mathring{\Phi}$. In what follows we will show that $(\gamma+\tau_3)\in \mathring{\Phi}'(1)$. Since $\gamma\in \mathring{\Phi}_{\ell}$ we get
$$2(\gamma,\tau_3)=(\gamma+\tau_3,\gamma+\tau_3)-(\gamma,\gamma)-(\tau_3,\tau_3)<0$$ and hence $(\tau_2,\tau_3)<0$ or $(\tau_1,\tau_3)<0$.

\vspace{0,2cm}
\textit{Case 1.1:} Assume that $(\tau_2,\tau_3)<0$. By \cite[Lemma 9.4]{Hu80} we have $\tau_2+\tau_3\in \mathring{\Phi}$ (if the roots are proportional there is nothing to show). If $(\tau_2+\tau_3)\in \mathring{\Phi}_s$, we get by the semi--closedness of $\mathring{\Phi}'$ that $(\tau_2+\tau_3)\in \mathring{\Phi}'$ and hence $(\gamma+\tau_3)=\tau_1+(\tau_2+\tau_3)\in \mathring{\Phi}'(1)$. So assume that $(\tau_2+\tau_3)\in\mathring{\Phi}_\ell$. In this case we only have to show that $\tau_3\in\Phi_{\ell}$, because the semi--closedeness would again imply $(\tau_2+\tau_3)\in \mathring{\Phi}'$ and hence $(\gamma+\tau_3)=\tau_1+(\tau_2+\tau_3)\in \mathring{\Phi}'(1)$. Assume by contradiction that $(\tau_2+\tau_3)\in\mathring{\Phi}_\ell$ and $\tau_3\in\mathring{\Phi}_{s}$. Since $\gamma$ is a long root we have
$$(\gamma,\gamma)-(\tau_1,\tau_1)-(\tau_2,\tau_2)=2(\tau_1,\tau_2)\geq 0.$$ But on the other hand since $(\tau_1,\tau_1)=(\tau_3,\tau_3)$ and $(\gamma,\gamma)=(\tau_2+\tau_3,\tau_2+\tau_3)$ we obtain  $(\tau_1,\tau_2)=(\tau_2,\tau_3)<0$, which is a contradiction.

\vspace{0,2cm}
\textit{Case 1.2:} Assume that $(\tau_1,\tau_3)<0$. The proof is exactly as in Case 1.1 by interchanging the role of $\tau_1$ and $\tau_2$. We omit the details.

\vspace{0,2cm}
\textit{Case 2:}  Let $\gamma_1=(\tau_1+\tau_2)\in\mathring{\Phi}_\ell$, $\gamma_2=\tau_3+\tau_4\in\mathring{\Phi}_\ell$, where $\tau_i\in  (\mathring{\Phi}'\cap \mathring{\Phi}_s)$ and $(\gamma_1+\gamma_2)\in \mathring{\Phi} $. We will show that $(\gamma_1+\gamma_2)\in \mathring{\Phi}'(1).$ For this it will be enough to show that $(\tau_1+\tau_2+\tau_3)\in\mathring{\Phi}$ or $(\tau_1+\tau_2+\tau_4)\in \mathring{\Phi}$, because then we can use Case 1 to finish the proof. Consider 
$$(\gamma_1+\gamma_2,\gamma_1+\gamma_2)-(\gamma_1,\gamma_1)-(\gamma_2,\gamma_2)=2(\gamma_1,\gamma_2)<0.$$
This implies $(\tau_1+\tau_2,\tau_3)<0$ or $(\tau_1+\tau_2,\tau_4)<0$ and we are done with \cite[Lemma 9.4]{Hu80}. 

Hence the closedness of $\mathring{\Phi}'(1)$ is established. Since $\mathring{\Phi}'$ is semi--closed, we have $\mathring{\Phi}'\subsetneq <\mathring{\Phi}'>$ and if $\mathring{\Phi}_s \not\subseteq \mathring{\Phi}'$ we would also have that $<\mathring{\Phi}'>\subsetneq \mathring{\Phi}$. This contradicts the maximality of $\mathring{\Phi}'$ and thus it must contain all short roots.
\endproof
\end{lem}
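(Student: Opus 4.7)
The plan is to define $\mathring{\Phi}'(1)$ to be the explicit set displayed in \eqref{conclg}, prove that it is itself closed in $\mathring{\Phi}$, and then deduce both conclusions. The first conclusion is immediate once closedness is known: every closed subroot system containing $\mathring{\Phi}'$ must by closedness contain each long sum $\tau_1+\tau_2$ with $\tau_i\in\mathring{\Phi}'\cap \mathring{\Phi}_s$, so $\mathring{\Phi}'(1)$ is forced to lie inside $\langle \mathring{\Phi}'\rangle$; if it is also closed, the two coincide. The ``in particular'' assertion then follows from maximality: semi-closedness forces $\mathring{\Phi}'\subsetneq \mathring{\Phi}'(1)$, so maximality of $\mathring{\Phi}'$ among subsystems not properly contained in a proper closed one demands $\mathring{\Phi}'(1)=\mathring{\Phi}$; since the augmentation only adds long roots, $\mathring{\Phi}_s\subseteq \mathring{\Phi}'$.

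All the work is therefore in verifying closedness of $\mathring{\Phi}'(1)$. By Remark~\ref{automatic}, it suffices to check that $\alpha+\beta\in \mathring{\Phi}$ with $\alpha,\beta\in \mathring{\Phi}'(1)$ implies $\alpha+\beta\in\mathring{\Phi}'(1)$. The pairs entirely within $\mathring{\Phi}'$ are handled by the semi-closedness hypothesis (outputs landing in $\mathring{\Phi}_\ell$ arising from short+short automatically belong to $\mathring{\Phi}'(1)$ by construction), so the genuinely new cases involve at least one ``new'' long root $\gamma=\tau_1+\tau_2$ with $\tau_i\in \mathring{\Phi}'\cap \mathring{\Phi}_s$.

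In Case~A (one new long root), suppose $\gamma=\tau_1+\tau_2\in \mathring{\Phi}_\ell$ and $\tau_3\in \mathring{\Phi}'$ with $\gamma+\tau_3\in \mathring{\Phi}$. The identity
$$2(\gamma,\tau_3)=(\gamma+\tau_3,\gamma+\tau_3)-(\gamma,\gamma)-(\tau_3,\tau_3)$$
is strictly negative because $\gamma$ is long and thus $\|\gamma\|^2\ge \|\gamma+\tau_3\|^2$, while $\|\tau_3\|^2>0$. Hence $(\tau_1,\tau_3)<0$ or $(\tau_2,\tau_3)<0$; by symmetry assume the latter, so $\tau_2+\tau_3\in\mathring{\Phi}$ via \cite[Lemma 9.4]{Hu80}. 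Write $\gamma+\tau_3=\tau_1+(\tau_2+\tau_3)$. If $\tau_2+\tau_3\in \mathring{\Phi}_s$, semi-closedness places it in $\mathring{\Phi}'$, reducing to the built-in definition of $\mathring{\Phi}'(1)$. If $\tau_2+\tau_3\in \mathring{\Phi}_\ell$, a short length computation (comparing $\|\gamma\|^2=\|\tau_1\|^2+\|\tau_2\|^2+2(\tau_1,\tau_2)$ with $\|\gamma\|^2=\|\tau_2+\tau_3\|^2$) forces $\tau_3$ to be long as well, and then semi-closedness handles the long+long$\to$long case directly. Case~B (two new long roots $\gamma_1,\gamma_2$ with $\gamma_1+\gamma_2\in \mathring{\Phi}$) reduces to Case~A: expanding $\|\gamma_1+\gamma_2\|^2$ yields $(\gamma_1,\gamma_2)<0$, which makes at least one of $(\tau_1+\tau_2,\tau_3)$ and $(\tau_1+\tau_2,\tau_4)$ negative, so $\gamma_1+\tau_3$ or $\gamma_1+\tau_4$ already lies in $\mathring{\Phi}$ and Case~A applies.

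I expect the main obstacle to be the subcase in Case~A where $\tau_2+\tau_3\in\mathring{\Phi}_\ell$: the semi-closedness definition distinguishes sharply between short+short$\to$long and long+long$\to$divisible, so one must justify via an equal-length computation that $\tau_3$ itself is long before invoking the right clause. Everything else is a routine reduction once the length bookkeeping is in place.
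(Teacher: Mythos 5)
Your proposal is correct and follows essentially the same route as the paper: prove that the augmented set $\mathring{\Phi}'(1)$ is closed via the same inner-product estimates (one new long root $\gamma=\tau_1+\tau_2$ plus $\tau_3\in\mathring{\Phi}'$, with the length comparison forcing $\tau_3$ long when $\tau_2+\tau_3$ is long, and the two-new-long-roots case reduced to the first via $(\gamma_1,\gamma_2)<0$ and \cite[Lemma 9.4]{Hu80}), then conclude by Remark~\ref{automatic} and maximality exactly as the paper does. The only cosmetic differences are that the paper parenthetically notes the degenerate case $\tau_3=-\tau_2$ (where the sum is not a root but $\gamma+\tau_3=\tau_1\in\mathring{\Phi}'$ trivially), and your phrase \emph{long+long$\to$long} really refers to the short+long sums $\tau_2+\tau_3$ and $\tau_1+(\tau_2+\tau_3)$, which semi-closedness keeps inside $\mathring{\Phi}'$ just as in the paper.
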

\begin{rem}\label{rem2}
In type $C_n$, $B_2$ or $G_2$ we actually have $\mathring{\Phi}'=\mathring{\Phi}_s$ in Lemma~\ref{shortarethere}, i.e. the set of short roots is the unique maximal semi--closed subroot system. To see this, assume that $\mathring{\Phi}'$ is a maximal closed subroot system in the root system of type $C_n$, which contains a long root  of the form $2\epsilon_i$ for some $i\in\{1,\dots,n\}$. Since the group generated by the reflections $s_{\alpha}\in W(\mathring{\Phi})$ corresponding to the simple short roots generate a subgroup isomorphic to the symmetric group $S_n$ we get that $\mathring{\Phi}'$ contains all long roots. Hence $\mathring{\Phi}'$ is closed and we obtain our desired contradiction. A similar argument shows the claim for $B_2$ and $G_2$.
\end{rem}

\section{Classification - Part 1: The semi--closed reduced case}\label{section4}
The aim of this section is to classify all maximal closed subroot systems $\Psi$ of $\Phi$ satisfying the property that $\text{Gr}(\Psi)$ is semi--closed and $\mathring{\Phi}$ is reduced. Note that $\mathring{\Phi}$ cannot be of simply--laced type and $\Lambda_\ell\subsetneq \Lambda_s$ by Lemma~\ref{lem3}.
\subsection{} \textit{We assume in this subsection that $\mathring{\Phi}$ is of type $C_n$ or $G_2$.} Recall from Lemma~\ref{inclusionen} that $\Lambda_s$ is a subgroup in this case and set $\Gamma_s=\{(\a,\beta)\in\mathring{\Phi}_s\times \mathring{\Phi}_s: \alpha+\beta\in\mathring{\Phi}_\ell\}$. It is clear that $\Gamma_s$ is non--empty.
\begin{thm}\label{Cng2case}Let $\Phi$ an irreducible affine reflection system with $\mathring{\Phi}$ of type $C_n$ or $G_2$. We have that $\Psi\subseteq \Phi$ is a maximal closed subroot system with semi--closed gradient if and only if there exists a maximal subgroup $H\subseteq\Lambda_s$ of index $m_{\mathring{\Phi}}$ which contains $\Lambda_\ell$ and a $\mathbb{Z}$--linear function $\tau: \mathring{\Phi}_s \to \Lambda_s/H$ satisfying $\tau(\a)\neq -\tau(\beta)$  for all $\alpha,\beta\in \Gamma_s$ such that 
$$\Psi=\{\a \oplus \tau (\a):  \a\in\mathring{\Phi}_s\}.$$
 \end{thm}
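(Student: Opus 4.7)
The plan is to encode the freedom in a maximal closed subroot system $\Psi\subseteq\Phi^{\text{re}}$ with semi--closed gradient by a subgroup $H$ and a coset--valued function $\tau$, and to verify the correspondence in both directions. By Remark~\ref{rem2}, $\mathring{\Phi}_s$ is the unique maximal semi--closed subroot system of $\mathring{\Phi}$ when $\mathring{\Phi}$ is of type $C_n$ or $G_2$, so any such $\Psi$ automatically satisfies $\text{Gr}(\Psi)=\mathring{\Phi}_s$ and is determined by the cosets $Z_\alpha(\Psi)$ for $\alpha\in\mathring{\Phi}_s$.

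($\Leftarrow$) Given $(H,\tau)$, I set $\Psi:=\{\alpha\oplus y : \alpha\in\mathring{\Phi}_s,\ y+H=\tau(\alpha)\}$ and check closedness, the subroot--system property, and maximality. Closedness splits on $\alpha+\beta$: when $\alpha+\beta\in\mathring{\Phi}_s$, the $\mathbb{Z}$--linearity of $\tau$ places the sum in $\Psi$; when $\alpha+\beta\in\mathring{\Phi}_\ell$, the hypothesis $\tau(\alpha)\neq -\tau(\beta)$ combined with $\Lambda_\ell\subseteq H$ forces $y+z\notin\Lambda_\ell$, so the would--be sum falls outside $\Phi^{\text{re}}$. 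That $\Psi$ is a subroot system follows from Remark~\ref{automatic} once one checks $-\Psi=\Psi$ and the imaginary cases, both of which reduce to $\mathbb{Z}$--linearity of $\tau$. For maximality, let $\Psi\subseteq\Psi'\subsetneq\Phi^{\text{re}}$ with $\Psi'$ closed; the dichotomy of Section~\ref{33} together with Remark~\ref{rem2} forces $\text{Gr}(\Psi')\in\{\mathring{\Phi}_s,\mathring{\Phi}\}$. If $\text{Gr}(\Psi')=\mathring{\Phi}_s$, Proposition~\ref{prop2a} and Lemma~\ref{inclusionen}(i) (applicable since $\mathring{\Phi}_s$ is simply--laced, of type $D_n$ or $A_2$) produce a subgroup $H'\supseteq H$ with $Z_\alpha(\Psi')=p_\alpha+H'$; maximality of $H$ gives $H'=H$ (so $\Psi'=\Psi$), since $H'=\Lambda_s$ would place $(\alpha+\beta)\oplus 0\in\Phi^{\text{re}}$ into $\Psi'$ by closedness, forcing $\alpha+\beta\in\text{Gr}(\Psi')$, absurd. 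If $\text{Gr}(\Psi')=\mathring{\Phi}$, I pick $\gamma\oplus w\in\Psi'$ with $\gamma=\alpha+\beta$ long, subtract $\alpha\oplus p_\alpha\in\Psi$ to obtain $\beta\oplus(w-p_\alpha)\in\Psi'$; tracking cosets modulo $H$ (using $w\in\Lambda_\ell\subseteq H$) forces either $\tau(\alpha)=-\tau(\beta)$, contradicting the hypothesis because $(\alpha,\beta)\in\Gamma_s$, or $Z_\alpha(\Psi')=\Lambda_s$ for every short $\alpha$; the latter, combined with closedness, propagates to $Z_\gamma(\Psi')=\Lambda_\ell$ for every long $\gamma$, yielding $\Psi'=\Phi^{\text{re}}$, again a contradiction.

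($\Rightarrow$) Conversely, let $\Psi$ be maximal closed with semi--closed gradient. Remark~\ref{rem2} gives $\text{Gr}(\Psi)=\mathring{\Phi}_s$, and Proposition~\ref{prop2a} furnishes a $\mathbb{Z}$--linear $p$ with $p_\alpha\in Z_\alpha(\Psi)$. Lemma~\ref{inclusionen}(i) then yields a single subgroup $H\subseteq\Lambda_s$ such that $Z_\alpha(\Psi)=p_\alpha+H$ for every $\alpha\in\mathring{\Phi}_s$. Set $\tau(\alpha):=p_\alpha+H\in\Lambda_s/H$; the non--vanishing condition is immediate from closedness, because $\tau(\alpha)+\tau(\beta)=0$ for some $(\alpha,\beta)\in\Gamma_s$ would allow the choice $y_\alpha=p_\alpha$, $y_\beta=-p_\alpha\in p_\beta+H$, placing $(\alpha+\beta)\oplus 0\in\Phi^{\text{re}}$ inside $\Psi$, contradicting $\alpha+\beta\notin\mathring{\Phi}_s=\text{Gr}(\Psi)$.

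The main obstacle is extracting $\Lambda_\ell\subseteq H$ and $[\Lambda_s:H]=m_{\mathring{\Phi}}$ from the maximality of $\Psi$. For the inclusion, if $\Lambda_\ell\not\subseteq H$ one enlarges $H$ to $H^{\dagger}:=H+\Lambda_\ell$: the closedness of $\Psi$ forces $(p_\alpha+p_\beta+H)\cap\Lambda_\ell=\emptyset$, hence $p_\alpha+p_\beta\notin H+\Lambda_\ell=H^{\dagger}$, so the induced $\tau^{\dagger}$ still satisfies $\tau^{\dagger}(\alpha)\neq -\tau^{\dagger}(\beta)$ for all $(\alpha,\beta)\in\Gamma_s$; the resulting strictly larger closed subroot system with gradient still equal to $\mathring{\Phi}_s$ contradicts maximality. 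For the index, Lemma~\ref{inclusionen}(ii)--(iii) gives $m_{\mathring{\Phi}}\Lambda_s\subseteq\Lambda_\ell\subseteq H$, so $\Lambda_s/H$ is naturally a vector space over $\mathbb{F}_{m_{\mathring{\Phi}}}$. Because $m_{\mathring{\Phi}}$ annihilates $\tau$, the set $S=\{\tau(\alpha+\beta):(\alpha,\beta)\in\Gamma_s\}\subseteq\Lambda_s/H$ collapses to at most two non--zero elements---the single element $\tau(2\epsilon_i)$ for $C_n$ and the pair $\pm\tau(\beta)$ for $G_2$. If $\dim_{\mathbb{F}_{m_{\mathring{\Phi}}}}(\Lambda_s/H)\geq 2$, an elementary hyperplane count produces a hyperplane $V\subseteq\Lambda_s/H$ disjoint from $S$; its preimage $H'$ satisfies $H\subsetneq H'\subsetneq\Lambda_s$ and the induced $\tau'$ still obeys non--vanishing, again contradicting maximality. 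Therefore $\dim_{\mathbb{F}_{m_{\mathring{\Phi}}}}(\Lambda_s/H)=1$ and $[\Lambda_s:H]=m_{\mathring{\Phi}}$, completing the correspondence.
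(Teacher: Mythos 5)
Your argument is correct, and the equivalence ($\Leftarrow$) is handled essentially as in the paper: you reduce to the dichotomy $\text{Gr}(\Psi')\in\{\mathring{\Phi}_s,\mathring{\Phi}\}$, use $H\subseteq Z_s^{\Psi'}$ plus maximality of $H$, and rule out $Z_s^{\Psi'}=\Lambda_s$ exactly as the paper does (your variant of testing against an arbitrary closed $\Psi'\subsetneq\Phi^{\text{re}}$ rather than a maximal one is a small but genuine simplification, since it avoids the implicit existence of a maximal closed oversystem). Where you genuinely diverge is the forward direction. The paper does not set $H=Z_s$: it first shows $Z_s+m_{\mathring{\Phi}}\Lambda_s\subsetneq\Lambda_s$, chooses $H$ to be a maximal subgroup containing $Z_s+m_{\mathring{\Phi}}\Lambda_s$ (so the index $m_{\mathring{\Phi}}$ is automatic from simplicity of $\Lambda_s/H$ as a $\bz/m_{\mathring{\Phi}}\bz$--module), proves the $H$--enlarged set is closed via the subroot system it generates, and finally obtains $\Lambda_\ell\subseteq H$ by exhibiting a chain $\beta_1,\dots,\beta_{m_{\mathring{\Phi}}}$ of short roots whose consecutive sums exhaust all nonzero cosets of $H$. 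You instead take $H=Z_s$ (legitimate, since $\text{Gr}(\Psi)=\mathring{\Phi}_s$ is simply--laced irreducible, so Proposition~\ref{prop2a}, Proposition at the end of Section~\ref{section2} and Lemma~\ref{inclusionen}(i) apply), get $\Lambda_\ell\subseteq H$ by the enlargement $H+\Lambda_\ell$, and get the index by viewing $\Lambda_s/H$ as an $\mathbb{F}_{m_{\mathring{\Phi}}}$--vector space and observing that all long roots are congruent up to sign modulo $m_{\mathring{\Phi}}$ times the short--root lattice, so the forbidden cosets form a set $\{\pm c\}$ avoidable by a hyperplane whenever the dimension is at least $2$; maximality of $\Psi$ then forces dimension $1$. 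Your route buys a more conceptual, linear--algebraic explanation of why the index is exactly $m_{\mathring{\Phi}}$ (and treats $C_n$ and $G_2$ uniformly), while the paper's route buys the closedness of the enlarged system and the index in one stroke and needs no extension of $\tau$ to long roots; both rest on the same inputs ($m_{\mathring{\Phi}}\Lambda_s\subseteq\Lambda_\ell$ and maximality of $\Psi$). Two points you compress and should state explicitly: that $\text{Gr}(\Psi)=\mathring{\Phi}_s$ uses the fact from Section~\ref{section3} that the gradient of a maximal closed subroot system with semi--closed gradient is maximal semi--closed (before invoking Remark following Lemma~\ref{shortarethere}), and that in the maximality check the exclusion of a semi--closed gradient strictly between $\mathring{\Phi}_s$ and $\mathring{\Phi}$ needs the transitivity argument from that remark applied to $\text{Gr}(\Psi')\supseteq\mathring{\Phi}_s$, not the remark's statement verbatim.
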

\begin{proof}
Assume first that $\Psi\subseteq  \Phi$ is a maximal closed subroot system. We know from Lemma~\ref{shortarethere} and Remark~\ref{rem2} that $\mathrm{Gr}(\Psi)=\mathring{\Phi}_s$. Therefore, the gradient is irreducible and the discussion below \eqref{irrdec} implies that there exists a subgroup $Z_s\subseteq \Lambda_s$ such that
 $$\Psi=\{\alpha \oplus  (p_\a+Z_s): \alpha\in \mathring{\Phi}_s\},$$
 where we recall that $p: \mathring{\Phi}_s \to \Lambda_s$ is the $\bz$--linear function defined in Section~\ref{32}.
 We claim that $(Z_s+m_{\mathring{\Phi}}\Lambda_s)$ must be a proper subgroup of $\Lambda_s$. If not, we have $Z_s+m_{\mathring{\Phi}}\Lambda_s=\Lambda_s$ and we can assume for a moment that $p_\a\in m_{\mathring{\Phi}}\Lambda_s$ for all $\a\in\mathring{\Phi}_s$. Since $0\in Z_s$ and $m_{\mathring{\Phi}}\Lambda_s\subseteq \Lambda_\ell$ we get a contradiction to
 $$((p_{\a}+Z_s)+(p_{\beta}+Z_s)\big)\cap \Lambda_\ell =\emptyset,\ \ (\alpha,\beta)\in \Gamma_s.$$
Thus $(Z_s+m_{\mathring{\Phi}}\Lambda_s)$ is a proper subgroup of $\Lambda_s$ and we can choose a maximal subgroup $H$ of $\Lambda_s$ which contains the group generated by $Z_s$ and $m_{\mathring{\Phi}}\Lambda_s$. In particular, $\Lambda_s/H$ is a simple $\bz/m_{\mathring{\Phi}}\bz$--module and therefore $H$ is a subgroup of index $m_{\mathring{\Phi}}$. We set 
 $$\Psi'=\{\alpha \oplus  \tau(\a): \alpha\in \mathring{\Phi}_s\},$$
 where $\tau(\a)$ denotes the coset of $p_\a$ in $\Lambda_s/H$. Note that $\Psi\subseteq \Psi'$. We claim that 
 \begin{equation}\label{22}
 (\tau(\a)+\tau(\beta))\cap \Lambda_\ell =\emptyset,\ \ (\alpha,\beta)\in \Gamma_s.
 \end{equation}
 Assume by contradiction that \eqref{22} is false, i.e. $\Psi'$ is not closed and the closed subroot system $\Psi''$ generated by $\Psi'$ has the property $\text{Gr}(\Psi'')\cap \mathring{\Phi}_\ell\neq \emptyset$. Since $H$ is a proper subgroup of $\Lambda_s$, we have
 $\Psi''\subsetneq \Phi$ and hence $\Psi=\Psi''$ which is a contradiction to
 $\mathrm{Gr}(\Psi)=\mathring{\Phi}_s.$ This implies \eqref{22} which gives that $\Psi'$ is a closed subroot system. It follows by the maximality of $\Psi$ that $\Psi=\Psi'$. It remains to show the properties of the function $\tau$ and that $\Lambda_\ell$ is contained in $H$; recall that $H$ is of index $m_{\mathring{\Phi}}$ and $H\cap \Lambda_\ell\neq \emptyset$. If $\tau(\a)=-\tau(\beta)$ for some $(\a,\beta)\in\Gamma_s$ we get 
 $$0\in (\tau(\a)+\tau(\beta))\cap \Lambda_\ell,$$
 which is a contradiction to \eqref{22}. Hence $\tau(\a)\neq -\tau(\beta)$ for all $(\a,\beta)\in\Gamma_s$. We claim that we can choose
short roots $\beta_1,\beta_2,\dots, \beta_{m_{\mathring{\Phi}}}$ such that $(\beta_i,\beta_{i+1})\in\Gamma_s$ for all $1\leq i\leq m_{\mathring{\Phi}}-1$ and
 \begin{equation}\label{zuzhg}
 \{\tau(\beta_i)+\tau(\beta_{i+1}),\ 1\leq i< m_{\mathring{\Phi}}\}=(\Lambda_s/H)\backslash\{0\}.
 \end{equation} 
To see this, note that $m_{\mathring{\Phi}}\in \{2,3\}$, so if $m_{\mathring{\Phi}}=2$ equation \eqref{zuzhg} is immediate. If $m_{\mathring{\Phi}}=3$ equation \eqref{zuzhg} follows by a straightforward analysis of the root system of type $G_2$ using  $\tau(\a)\neq -\tau(\beta)$ for all $(\a,\beta)\in\Gamma_s$; we omit the details.
Now we have $\Lambda_\ell\subseteq H$, because $\Lambda_\ell$ lies in the complement of each coset $\tau(\beta_i)+\tau(\beta_{i+1})$ by \eqref{22} and by \eqref{zuzhg} we exhaust all cosets different from $H$. So the proof of the forward direction is done. 

For the converse direction assume we are given $$\Psi=\{\alpha \oplus  \tau(\a): \alpha\in \mathring{\Phi}_s\}$$
with $\tau$ and $H$ as required. If $(\a,\beta)\in \Gamma_s$ we have $(\tau(\a)+\tau(\beta))\cap \Lambda_\ell=\emptyset$, because $\Lambda_\ell\subseteq H$ and $\tau(\a)+\tau(\beta)\neq 0$. It follows that $\Psi$ is a closed subroot system. Assume that $\Psi\subseteq \Psi'\subseteq \Phi$ and $\Psi'$ is a maximal closed subroot system. If $\text{Gr}(\Psi')$ is semi--closed, the calculation above shows that there exists a maximal subgroup $H'$ of index $m_{\mathring{\Phi}}$ and a $\bz$--linear  function $\tau':\mathring{\Phi}_s\rightarrow \Lambda_s/H'$ such that $\tau'(\alpha)\neq -\tau'(\beta)$ for all $(\alpha,\beta)\in\Gamma_s$ and
$$\Psi'=\{\alpha \oplus  \tau'(\a): \alpha\in \mathring{\Phi}_s\}.$$
Since $\Psi\subseteq \Psi'$ we obtain $\tau(\a)\subseteq \tau'(\a)$ for all $\alpha\in \mathring{\Phi}_s$. With other words, we can choose representatives $\tau_\alpha,\tau_{\a'}\in \Lambda_s$ such that 
$\tau(\alpha)=\tau_\a+H,\ \tau'(\a)=\tau'_{\a}+H' \text{ and } (\tau_\a-\tau'_{\a})+H\subseteq H'$. Since $0\in H$ and $H'$ is a subgroup we get $H\subseteq H'$ and the maximality of $H'$ implies $H=H'$. Using $\tau(\a)\subseteq \tau'(\a)$ for all $\alpha\in \mathring{\Phi}_s$ we also get $\tau=\tau'$ and hence $\Psi=\Psi'$. If $\text{Gr}(\Psi')$ is closed we must have $\text{Gr}(\Psi')=\mathring{\Phi}$ (since $\mathring{\Phi}_s\subseteq \text{Gr}(\Psi')$ and $\mathring{\Phi}_\ell\subseteq \mathring{\Phi}_s+\mathring{\Phi}_s$). Hence $\text{Gr}(\Psi')$ is irreducible and we can find by the discussion in Section~\ref{32} a subgroup $Z_s^{\Psi'}\subseteq \Lambda_s$ and a subset $Z_\ell^{\Psi'}\subseteq \Lambda_\ell$ and a $\bz$--linear function $\tilde \tau:\text{Gr}(\Psi')\rightarrow \Lambda_s$ , $\alpha\mapsto \tilde\tau_\a$ such that $\tilde \tau(\mathring{\Phi}_\ell)\subseteq \Lambda_\ell$ and 
$$\Psi'=\{\alpha \oplus  (\tilde \tau_\alpha+Z_s^{\Psi'}): \alpha\in \mathring{\Phi}_s\}\cup \{\alpha \oplus  (\tilde \tau_\alpha+Z_\ell^{\Psi'}): \alpha\in \mathring{\Phi}_\ell\}.$$
 As above we can deduce from $\Psi\subseteq \Psi'$ that $H\subseteq Z_s^{\Psi'}$. By the maximality of $H$ we must have $H=Z_s^{\Psi'}$ or $Z_s^{\Psi'}=\Lambda_s$. The first case implies as before $\tau(\a)=\tilde\tau_\alpha+H$ for all $\a\in \mathring{\Phi}_s$ and contradicts the following fact. Let $(\alpha,\beta)\in\Gamma_s$ and recall that $\gamma:=\alpha+\beta\in \text{Gr}(\Psi')$; let $y\in\Lambda_\ell$ such that $\gamma\oplus y\in \Psi'$. Then
 $$(\gamma\oplus y)+(-\alpha\oplus -\tau(\alpha))=\beta\oplus -\tau(\alpha)\subseteq\Psi'$$
 and we get a contradiction to $\tau(\beta)\neq -\tau(\a)$. Therefore $\Lambda_s=Z_s^{\Psi'}$. But this implies easily $\Psi'=\Phi$, because if $\gamma$ is a long root such that $\gamma=\alpha+\beta$, $(\alpha,\beta)\in\Gamma_s$ we get
 $$\gamma\oplus \Lambda_\ell=(\alpha\oplus\Lambda_\ell)+(\beta\oplus 0)\subseteq\Psi'$$ and the maximality of $\Psi$ is established.
\end{proof}
\subsection{} \textit{We assume in this subsection that $\mathring{\Phi}$ is of type $F_4$.} In this case we have by Lemma~\ref{inclusionen} that $\Lambda_s$ and $\Lambda_\ell$ are both subgroups. Moreover, since $\Lambda_{\ell}\subseteq \Lambda_s$ and $2\Lambda_s\subseteq\Lambda_{\ell}$ we can write $\Lambda_s$ as a union of cosets
$$\Lambda_s=\dot{\bigcup}_{r\in T} (x_r+\Lambda_\ell),$$
where $T$ is some index set containing zero.
\begin{thm}\label{F4case} Let $\Phi$ an irreducible affine reflection system with $\mathring{\Phi}$ of type $F_4$. We have that $\Psi\subseteq \Phi$ is a maximal closed subroot system with semi--closed gradient if and only if there exists a maximal subgroup $H\subseteq \Lambda_s$  of index 2 containing $\Lambda_\ell$ and a cardinality two subset $I\subseteq\{1,2,3,4\}$ and a $\mathbb{Z}$--linear function $\tau: \mathring{\Phi}_s \to \Lambda_s/H$ satisfying
$$\tau(\epsilon_i)=\tau(\epsilon_j),\  \text{ if } i,j\in I \text{ or } i,j\notin I,\ \  \tau(\epsilon_r)\neq \tau(\epsilon_s),\ \ r\in I, s\notin I$$
such that
\begin{equation}\label{formf4}\Psi=\big\{\a \oplus \tau(\a):  \a\in\mathring{\Phi}_s\}\cup \{\pm\epsilon_i\pm\epsilon_j\oplus \Lambda_\ell: i,j\in I \text{ or } i,j\notin I\}. \end{equation}

\proof
Let $\Psi$ be a maximal closed subroot system. First we claim that $\text{Gr}(\Psi)$ is irreducible. If $\text{Gr}(\Psi)=\mathring{\Phi}_s$ this is clear. Otherwise there must be a long root in the gradient and since $\text{Gr}(\Psi)$ is stable under the reflection with short roots we can assume without loss of generality that $\pm\epsilon_i\pm\epsilon_j\in \text{Gr}(\Psi)$ for some $i,j\in\{1,2,3,4\}$. Again by reflecting the above root at a short root of the form $(\pm \epsilon_1\pm\epsilon_2\pm\epsilon_3\pm\epsilon_4)/2$ we get $(\pm\epsilon_r\pm\epsilon_s)\in \text{Gr}(\Psi)$ where $\{i,j,r,s\}=\{1,2,3,4\}$. This implies that
\begin{equation}\label{formgg}
\text{Gr}(\Psi)=\mathring{\Phi}_s\cup \{\pm\epsilon_i\pm\epsilon_j, \pm\epsilon_r\pm\epsilon_s\},
\end{equation}
because the presence of any other long root in the gradient would imply $\text{Gr}(\Psi)=\mathring{\Phi}$ and hence contradicts the semi--closedness. Now we can directly show that $\text{Gr}(\Psi)$ is irreducible. Since the sum of two roots of two different connected components is never a root we must have that $\{\pm\epsilon_i,\pm\epsilon_j,\pm\epsilon_i\pm\epsilon_j\}$ must lie in the same connected component. Likewise $\{\pm\epsilon_r,\pm\epsilon_s,\pm\epsilon_r\pm\epsilon_s\}$ lies in the same connected component. Now take a short root of the form $(\pm \epsilon_1\pm\epsilon_2\pm\epsilon_3\pm\epsilon_4)/2$. This root must lie in the same connected component as $\epsilon_i$ as well as in the same connected component as $\epsilon_r$. Hence $\text{Gr}(\Psi)$ is irreducible.

 To simplify the notation we will assume in the rest of the proof that $\text{Gr}(\Psi)=\mathring{\Phi}_s$ or $\text{Gr}(\Psi)$ is of the form \eqref{formgg} with $I=\{1,2\}$. By the discussion in Section~\ref{32} there is a $\bz$--linear function $p:\text{Gr}(\Psi)\rightarrow \Lambda_s$ and subsets $Z_s\subseteq \Lambda_s$, $Z_\ell\subseteq \Lambda_\ell$ such that $Z_\ell\subseteq Z_s$ and $Z_{\alpha}(\Psi)=p_\a+Z_s$ if $\alpha\in \mathring{\Phi}_s$ and $Z_{\alpha}(\Psi)=p_\a+Z_\ell$ otherwise. We also note that $Z_s$ must be a group because the type of $\text{Gr}(\Psi)$ is either $D_4$ or $C_4$. We set
$$H=\bigcup_{r\in T'}(x_r+\Lambda_\ell),\ \ T'=\{r\in T: Z_s\cap(x_r+\Lambda_\ell)\neq \emptyset\}$$
and claim that
\begin{equation}\label{cl1}
\Psi=\{\alpha\oplus (p_\a+H): \a\in \mathring{\Phi}_s\}\cup \{\pm\epsilon_1\pm\epsilon_2\oplus \Lambda_\ell, \pm\epsilon_3\pm\epsilon_4\oplus \Lambda_\ell\}.
\end{equation}
Since $Z_s\subseteq H$ we obtain that $\Psi$ is contained in the right hand side of \eqref{cl1} and hence we only have to show that the right hand side is a closed subroot system. Since $Z_s$ is a group it is not difficult to show that $H$ is also a group. We give only a sketch for the closedness. Assume that $\a,\beta\in \mathring{\Phi}_s$ such that $(\a+\beta)$ is a long root but $(\a+\beta)\notin \{\pm\epsilon_1\pm\epsilon_2, \pm\epsilon_3\pm\epsilon_4\}$. So we have to show that for every choice of $z_1,z_2\in H$ we have $$p_{\alpha}+p_\beta+z_1+z_2\notin \Lambda_\ell.$$  But this is clear since there is always $z_1',z_2'\in Z_s$ such that $z_i\equiv z_i' \mod \Lambda_\ell$, $i=1,2$; so if $p_{\alpha}+p_\beta+z_1+z_2\in \Lambda_\ell$ we would also have $p_{\alpha}+p_\beta+z'_1+z'_2\in \Lambda_\ell$, which is a contradiction to the closedness of $\Psi$. The rest becomes clear and we get \eqref{cl1}. So $H=Z_s$ is a group such that $\Lambda_\ell \subseteq H\subseteq \Lambda_s$. We set $\tau(\a)=p_\a+H$. Since $\Psi$ is closed, it is straightforward to check that
$$\tau(\epsilon_1)=\tau(\epsilon_2),\ \ \tau(\epsilon_3)=\tau(\epsilon_4),\ \ \tau(\epsilon_1)\neq \tau(\epsilon_3).$$
Hence it remains to show that $H$ is a maximal group; the fact that $H$ is of index 2 would follow from the maximality and $2\Lambda_s\subseteq \Lambda_\ell\subseteq H$. If there is another group $\Lambda_\ell\subseteq H\subseteq H'\subseteq \Lambda_s$ we can have two cases. In the first case we assume $p_{\epsilon_1}+H'\neq p_{\epsilon_3}+H'$. In this case
$$\Psi\subseteq \{\alpha\oplus (p_\a+H'): \a\in \mathring{\Phi}_s\}\cup \{\pm\epsilon_1\pm\epsilon_2\oplus \Lambda_\ell, \pm\epsilon_3\pm\epsilon_4\oplus \Lambda_\ell\}$$
and the right hand side is closed. This gives $H'=H$. In the second case we assume $p_{\epsilon_1}+H'= p_{\epsilon_3}+H'$. In this case 
$$\Psi\subseteq \{\alpha\oplus (p_\a+H'): \a\in \mathring{\Phi}_s\}\cup \{\beta\oplus \Lambda_\ell: \beta\in \mathring{\Phi}_\ell\}$$ and the right hand side is closed. We get $H'=\Lambda_s$ and we are done. 

For the converse direction we assume that $\Psi$ is of the form \eqref{formf4} and $\Psi'$ is a maximal closed subroot system containing $\Psi$. If $\text{Gr}(\Psi')$ is semi--closed we know from the calculations above that there exists a group $H'$ and a $\bz$--linear function $\tau'$ with the required properties such that $\Psi'$ is of the form \eqref{formf4}. Similarly as in the proof of Theorem~\ref{Cng2case} the maximality of $H$ immediately gives $\Psi=\Psi'$. So assume that $\text{Gr}(\Psi')=\mathring{\Phi}$. Since $\Psi\subseteq \Psi'$ and every long root can be written as a sum of two long roots (where one of them is in $\text{Gr}(\Psi)$) we obtain $\{\beta\oplus \Lambda_\ell: \beta\in \mathring{\Phi}_\ell\}\subseteq \Psi'$. By the irreducibility we can again find a $\bz$--linear function $p'$ and a subgroup $Z_s^{\Psi'}$ such that $Z_{\alpha}(\Psi')=p'_\a+Z_s^{\Psi'}$. Since
$$\epsilon_1\oplus (p'_{\epsilon_1}+Z_s^{\Psi'})+(-\epsilon_1+\epsilon_2)\oplus 0=\epsilon_2\oplus (p'_{\epsilon_1}+Z_s^{\Psi'})\subseteq\Psi'$$
we must have $p'_{\epsilon_1}\equiv p'_{\epsilon_2}$ mod $Z_s^{\Psi'}$.
Continuing in this way we obtain that $p^{'}$ is constant modulo $Z_s^{\Psi'}$. Therefore, by adding two short roots such that the sum is again a short root we get $2p_\a'\in p_\a'+Z_s^{\Psi'}$ and thus $Z_{\alpha}(\Psi')=Z_s^{\Psi'}$. 
This implies $H\subseteq Z_s^{\Psi'}$ and since $p'_{\epsilon_1}-p'_{\epsilon_3}\in Z_s^{\Psi'}\backslash H$ we get
$$\Lambda_\ell\subseteq H\subsetneq Z_s^{\Psi'}\subseteq \Lambda_s.$$ But this is impossible by the maximality of $H$.
\endproof
\end{thm}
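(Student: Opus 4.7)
The plan is to mirror the structure of Theorem~\ref{Cng2case}, but with additional combinatorial care because $F_4$ has both $\Lambda_s$ and $\Lambda_\ell$ as subgroups, and its short-root subsystem is itself a semi-closed but non-simply-laced object inside $F_4$. I would prove the forward and backward directions separately.

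For the forward direction, let $\Psi$ be a maximal closed subroot system with semi-closed gradient. By Lemma~\ref{shortarethere} we already have $\mathring{\Phi}_s\subseteq \mathrm{Gr}(\Psi)$. The first task is to pin down the shape of $\mathrm{Gr}(\Psi)$: if a long root $\pm\epsilon_i\pm\epsilon_j$ is in $\mathrm{Gr}(\Psi)$, then reflecting by a short root of the form $(\pm\epsilon_1\pm\epsilon_2\pm\epsilon_3\pm\epsilon_4)/2$ forces $\pm\epsilon_r\pm\epsilon_s\in \mathrm{Gr}(\Psi)$ with $\{r,s\}=\{1,2,3,4\}\setminus\{i,j\}$, and then maximality of the semi-closedness rules out any further long roots (else we would get all of $\mathring{\Phi}$). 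This yields the dichotomy $\mathrm{Gr}(\Psi)=\mathring{\Phi}_s$ or $\mathrm{Gr}(\Psi)=\mathring{\Phi}_s\cup\{\pm\epsilon_i\pm\epsilon_j,\pm\epsilon_r\pm\epsilon_s\}$ for a partition $\{1,2,3,4\}=I\sqcup I^c$. A short argument through the half-spin short roots also shows that $\mathrm{Gr}(\Psi)$ is irreducible, so the apparatus of Section~\ref{32} gives a single $\bz$-linear function $p$ together with subsets $Z_s\subseteq\Lambda_s$, $Z_\ell\subseteq\Lambda_\ell$, and since the short-root component is of simply-laced type $D_4$ (or $C_4$-like after adding the long roots) Lemma~\ref{inclusionen} makes $Z_s$ a subgroup.

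Next I would enlarge $Z_s$ to the natural subgroup $H\subseteq\Lambda_s$ defined as the union of all $\Lambda_\ell$-cosets $x_r+\Lambda_\ell$ that meet $Z_s$; the key claim is that $\Psi$ equals the set obtained by replacing $Z_s$ with $H$ and attaching $\Lambda_\ell$ to all the long roots in $\mathrm{Gr}(\Psi)$. The containment $\subseteq$ is tautological, and the reverse containment is forced by closedness of $\Psi$: for any two short roots $\alpha,\beta$ whose sum is a long root \emph{not} in $\mathrm{Gr}(\Psi)$, the obstruction $p_\alpha+p_\beta+z_1+z_2\notin\Lambda_\ell$ depends only on the $\Lambda_\ell$-cosets of $z_i$, so replacing each $z_i\in H$ by a representative in $Z_s$ (which exists by construction) brings us back to closedness of $\Psi$; this uses $2\Lambda_s\subseteq\Lambda_\ell$ crucially. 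Setting $\tau(\alpha)=p_\alpha+H$, the closedness of $\Psi$ then translates directly into the stated conditions on $\tau$ (equality on $I$ and on $I^c$, inequality across). Finally, maximality of $\Psi$ forces $H$ to be maximal in $\Lambda_s$: if $H\subsetneq H'\subseteq \Lambda_s$, I would split on whether $p_{\epsilon_1}+H'$ and $p_{\epsilon_3}+H'$ are equal or not, exhibiting in each case a strictly larger closed subroot system built by either refining $\tau$ modulo $H'$ or enlarging the long-root part to all of $\mathring{\Phi}_\ell$. Combined with $2\Lambda_s\subseteq\Lambda_\ell\subseteq H$, maximality yields index $2$.

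For the backward direction, assume $\Psi$ has the stated form and verify directly that it is a closed subroot system using the conditions on $\tau$ together with $\Lambda_\ell\subseteq H$. Then suppose $\Psi\subseteq\Psi'\subsetneq\Phi$ with $\Psi'$ maximal closed; split into cases by whether $\mathrm{Gr}(\Psi')$ is semi-closed (apply the forward analysis to $\Psi'$, obtain $H'$ with $H\subseteq H'$, invoke maximality of $H$ to force $H=H'$ and hence $\Psi=\Psi'$) or $\mathrm{Gr}(\Psi')=\mathring{\Phi}$ (use $\Psi\subseteq\Psi'$ and the fact that every long root of $F_4$ is a sum of a long root in $\mathrm{Gr}(\Psi)$ with something in $\mathrm{Gr}(\Psi')$ to conclude $\beta\oplus\Lambda_\ell\subseteq\Psi'$ for all long $\beta$; then apply Section~\ref{32} to get a subgroup $Z_s^{\Psi'}$ with $p'$ constant modulo it, and show $H\subsetneq Z_s^{\Psi'}\subseteq\Lambda_s$, contradicting maximality of $H$).

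The main obstacle I expect is the forward step of promoting $Z_s$ to the coset-saturated group $H$: one has to show that the \emph{a priori} larger set with $H$ replacing $Z_s$ and full $\Lambda_\ell$ attached to the long roots in $\mathrm{Gr}(\Psi)$ is still closed, so that maximality of $\Psi$ forces equality. This is what makes $H$ (and not merely $Z_s$) the correct invariant of $\Psi$, and is the genuinely $F_4$-specific feature distinguishing this case from the $C_n$/$G_2$ analysis of Theorem~\ref{Cng2case}; the subsequent maximality-of-$H$ argument, with its branching on whether a hypothetical $H'$ identifies the cosets $\tau(\epsilon_i)$ across $I$ and $I^c$, is the second delicate point.
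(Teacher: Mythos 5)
Your proposal is correct and follows essentially the same route as the paper's proof: the same dichotomy and irreducibility analysis of $\text{Gr}(\Psi)$ via the half-spin short roots, the same saturation of $Z_s$ to the union $H$ of $\Lambda_\ell$-cosets meeting it (with maximality of $\Psi$ forcing equality once the enlarged set is shown closed), the same two-case argument for maximality of $H$ depending on whether $p_{\epsilon_1}+H'=p_{\epsilon_3}+H'$, and the same converse split on $\text{Gr}(\Psi')$ being semi-closed or full. The only slight imprecision is attributing the coset-reduction step to $2\Lambda_s\subseteq\Lambda_\ell$ (it really only needs $\Lambda_\ell$ to be a subgroup, with $2\Lambda_s\subseteq\Lambda_\ell$ entering for the index-$2$ conclusion), which does not affect correctness.
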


\subsection{}\textit{We assume in this subsection that $\mathring{\Phi}$ is of type $B_n$.} Recall that $\Lambda_{\ell}$ is a subgroup in this case (see also our mild assumption) and hence $(\Lambda_s\backslash\Lambda_{\ell})+\Lambda_\ell\subseteq (\Lambda_s\backslash\Lambda_{\ell})$. Again we can write $\Lambda_s$ as a union of cosets
as in the previous subsection.
\begin{thm}\label{Bncase} Let $\Phi$ an irreducible affine reflection system with $\mathring{\Phi}$ of type $B_n$. We have that $\Psi\subseteq \Phi$ is a maximal closed subroot system with semi--closed gradient if and only if there exists a proper non--empty subset $J\subseteq \{1,\dots,n\}$ and disjoint subsets $Z_1,Z_2\subseteq \Lambda_s$ such that $Z_1$ and $Z_2$ are  unions of cosets modulo $\Lambda_\ell$, $\Lambda_s=Z_1\cup Z_2$ and
\begin{equation}\label{Bnform}\Psi=\{\pm\epsilon_i\oplus Z_1, \pm\epsilon_j\oplus Z_2: i\notin J, j\in J\}\cup\{(\pm\epsilon_p\pm\epsilon_q)\oplus \Lambda_{\ell}: p,q\notin J \text{ or } p,q\in J\}.\end{equation}
\end{thm}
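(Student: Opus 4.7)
The strategy mirrors the proofs of Theorems~\ref{Cng2case} and \ref{F4case}: first determine the shape of $\text{Gr}(\Psi)$ using semi--closedness and maximality, then pin down the coefficient sets $Z_\alpha(\Psi)$ by constructing an auxiliary subroot system $\tilde\Psi$ containing $\Psi$ and invoking maximality.

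For the forward direction, let $\Psi$ be a maximal closed subroot system with semi--closed gradient. Lemma~\ref{shortarethere} gives $\mathring{\Phi}_s\subseteq\text{Gr}(\Psi)$, so $\text{Gr}(\Psi)$ is determined by its long roots. On $\{1,\dots,n\}$ define $i\sim j$ iff $i=j$ or $\epsilon_i-\epsilon_j\in \text{Gr}(\Psi)$; the identity $s_{\epsilon_i-\epsilon_j}(\epsilon_j-\epsilon_k)=\epsilon_i-\epsilon_k$ together with the subroot system property (Lemma~\ref{lemm1}) shows this is an equivalence relation whose classes $J_1,\dots,J_r$ form a partition, and by reflecting at short roots one sees that the long roots in $\text{Gr}(\Psi)$ are precisely $\{\pm\epsilon_p\pm\epsilon_q : p,q\in J_t \text{ for some } t\}$. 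The case $r=1$ gives $\text{Gr}(\Psi)=\mathring{\Phi}$ which is closed, contradicting semi--closedness; if $r\ge 3$, merging $J_1\cup J_2$ produces a strictly larger semi--closed subroot system of $\mathring{\Phi}$, contradicting the fact that $\text{Gr}(\Psi)$ is maximal semi--closed (see Section~\ref{33}). Hence $r=2$, and setting $J:=J_1$, $J^c:=J_2=\{1,\dots,n\}\setminus J$ yields the stated shape of $\text{Gr}(\Psi)$.

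To analyze the coefficient sets, fix $i_0\in J^c$, $j_0\in J$ and set $Z_1:=Z_{\epsilon_{i_0}}(\Psi)+\Lambda_\ell$, $Z_2:=Z_{\epsilon_{j_0}}(\Psi)+\Lambda_\ell$. Since $\epsilon_{i_0}+\epsilon_{j_0}\notin\text{Gr}(\Psi)$, closedness of $\Psi$ forbids any element of $Z_{\epsilon_{i_0}}(\Psi)+Z_{\epsilon_{j_0}}(\Psi)$ from landing in $\Lambda_\ell$, and absorbing $\Lambda_\ell$ on both sides gives $(Z_1+Z_2)\cap \Lambda_\ell=\emptyset$. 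Introduce
\[\tilde\Psi:=\{\pm\epsilon_i\oplus Z_1 : i\in J^c\}\cup\{\pm\epsilon_j\oplus Z_2 : j\in J\}\cup\{(\pm\epsilon_p\pm\epsilon_q)\oplus \Lambda_\ell : p,q\in J \text{ or } p,q\in J^c\}.\]
Using $2\Lambda_s\subseteq \Lambda_\ell$ (so every $\Lambda_\ell$-coset is symmetric under negation), one checks that $\tilde\Psi$ is a closed subroot system with $\Psi\subseteq\tilde\Psi\subsetneq\Phi$ (the cross long roots $\pm\epsilon_p\pm\epsilon_q$ with $p\in J,q\in J^c$ are absent); the inclusion $\Psi\subseteq\tilde\Psi$ follows for short roots by summing $\epsilon_i\oplus z$ with some $(\epsilon_{i_0}-\epsilon_i)\oplus \ell\in \Psi$ to land in $Z_{\epsilon_{i_0}}(\Psi)-\ell\subseteq Z_1$. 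Maximality then forces $\Psi=\tilde\Psi$, so $Z_{\epsilon_i}(\Psi)=Z_1$ for every $i\in J^c$ (hence $Z_1$ is intrinsic and a union of $\Lambda_\ell$-cosets), analogously $Z_{\epsilon_j}(\Psi)=Z_2$ for $j\in J$, and the long-root coefficients equal $\Lambda_\ell$. Disjointness $Z_1\cap Z_2=\emptyset$ is immediate from $(Z_1+Z_2)\cap\Lambda_\ell=\emptyset$ (take $z=z'$, noting $-z\equiv z \bmod \Lambda_\ell$). Finally, if some coset $c+\Lambda_\ell$ were disjoint from $Z_1\cup Z_2$, adjoining it to $Z_1$ would yield a strictly larger closed subroot system still missing the cross long roots, contradicting maximality; hence $\Lambda_s=Z_1\cup Z_2$.

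The converse is a direct verification: closedness of $\Psi$ reduces to checking that the sum of $\epsilon_i\oplus z_1$ and $\epsilon_j\oplus z_2$ with $i\in J^c,j\in J$ never becomes a real root, which is exactly $(Z_1+Z_2)\cap\Lambda_\ell=\emptyset$ (equivalent to $Z_1\cap Z_2=\emptyset$ since both are symmetric unions of $\Lambda_\ell$-cosets), and reflection closure is automatic from the union-of-cosets property combined with $2\Lambda_s\subseteq\Lambda_\ell$. For maximality, any closed $\Psi\subseteq\Psi'\subsetneq\Phi$ must satisfy $\text{Gr}(\Psi')=\text{Gr}(\Psi)$ (otherwise some cross long root enters $\text{Gr}(\Psi')$, and combined with $Z_1\cup Z_2=\Lambda_s$ closedness forces $\Psi'=\Phi$), after which $Z_1\cup Z_2=\Lambda_s$ leaves no room for $Z_{\epsilon_i}(\Psi')$ to strictly contain $Z_1$ (any enlargement must meet $Z_2$), so $\Psi'=\Psi$. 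The main obstacle is anticipated to be the construction and verification of $\tilde\Psi$ in the second paragraph: one must extract in one blow the intrinsic nature of $Z_1,Z_2$, their union-of-cosets structure, and the fullness $\Lambda_\ell$ of the long-root coefficients from a single use of maximality. This hinges on the cancellation $2\Lambda_s\subseteq\Lambda_\ell$ and on the mild assumption that $\Lambda_\ell$ is a subgroup in type $B_2$, without which the Weyl-orbit comparison of coefficient sets across indices in the same block would break down.
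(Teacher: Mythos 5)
Your argument has a genuine gap at the step that forces the number of blocks to be two. You rule out $r\ge 3$ by saying that merging $J_1\cup J_2$ produces a strictly larger semi--closed subroot system of $\mathring{\Phi}$, ``contradicting the fact that $\text{Gr}(\Psi)$ is maximal semi--closed.'' But the paper's notion of maximal semi--closed (Definition in Section~\ref{33}) only says that $\text{Gr}(\Psi)$ is not properly contained in a \emph{proper closed} subroot system of $\mathring{\Phi}$; it says nothing about larger semi--closed ones. In fact, since $\mathring{\Phi}_s\subseteq \text{Gr}(\Psi)$ and every long root of $B_n$ is a sum of two short roots, the closed subroot system generated by $\text{Gr}(\Psi)$ is all of $\mathring{\Phi}$ (Lemma~\ref{shortarethere}), so \emph{any} gradient of this shape, with any number $r\ge 2$ of blocks, is maximal semi--closed in the paper's sense; your purported contradiction does not exist. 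Nor can you appeal directly to the maximality of $\Psi$: enlarging the gradient alone gives no closed subroot system of $\Phi$, because the full lift of a semi--closed subroot system is not closed (sums of two short coefficients from $\Lambda_s$ can land in $\Lambda_\ell$). So as written, the case $r\ge 3$ is not excluded, and the shape of $\text{Gr}(\Psi)$ is not yet established.

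The good news is that the gap is repairable with exactly the tools you use in your second paragraph: if $r\ge 3$, set $Z_t:=Z_{\epsilon_{i_t}}(\Psi)+\Lambda_\ell$ for representatives $i_t\in J_t$, note $(Z_s+Z_t)\cap\Lambda_\ell=\emptyset$ for $s\neq t$ by closedness, and form the analogue of your $\tilde\Psi$ in which the blocks $J_2,\dots,J_r$ are merged with short coefficient set $Z_2\cup\cdots\cup Z_r$ and all long roots inside the merged block carry $\Lambda_\ell$; this is a proper closed subroot system strictly containing $\Psi$, contradicting maximality, whence $r=2$. This is essentially how the paper proceeds, except that it never isolates the block count at the gradient level at all: it defines the two classes directly by the coefficient criterion $I=\{i: Z_{\epsilon_i}(\Psi)\cap\Lambda_\ell\neq\emptyset\}$, $J=\{i: Z_{\epsilon_i}(\Psi)\cap\Lambda_\ell=\emptyset\}$, shows $\Psi$ is contained in the proper closed set \eqref{1} built from $Z_I$, $Z_J$, and concludes by maximality in one stroke. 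Your construction of $\tilde\Psi$, the disjointness and exhaustion arguments for $Z_1,Z_2$, and your converse direction are sound and run parallel to the paper's proof; only the gradient-level exclusion of $r\ge 3$ needs to be replaced by a coefficient-level argument as above.
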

 \proof Let $\Psi$ a maximal closed subroot system and set 
$$I=\{i\in \{1,\dots,n\}: Z_{\epsilon_i}(\Psi)\cap \Lambda_{\ell}\neq \emptyset\},\ \ J=\{i\in \{1,\dots,n\}: Z_{\epsilon_i}(\Psi)\cap \Lambda_{\ell}= \emptyset\}.$$
We associate the sets $Z_{I}$ and $Z_{J}$ respectively as follows:
$$Z_{I}=\dot{\bigcup}_{r\in T(I)} (x_r+\Lambda_\ell),\ \ Z_{J}=\Lambda_s\backslash Z_{I},$$
where $T(I)=\{r\in T: \exists s\in I \text{ such that } Z_{\epsilon_s}(\Psi)\cap (x_r+\Lambda_\ell)\neq \emptyset\}.$
Since $\text{Gr}(\Psi)$ is not closed and stable under the reflection with short roots by Lemma~\ref{conclg} we can assume that there are short roots $\epsilon_i,\epsilon_j\in \text{Gr}(\Psi)$ such that $(\epsilon_i+\epsilon_j)\notin \text{Gr}(\Psi)$. Clearly $i\in J$ or $j\in J$; otherwise the closedness of $\Psi$ would give $(\epsilon_i+\epsilon_j)\in \text{Gr}(\Psi)$. We claim that 
\begin{equation}\label{1}
\Psi=\{\pm\epsilon_i\oplus Z_{I}, \pm\epsilon_j\oplus Z_{J}: i\in I, j\in J\}\cup\{(\pm\epsilon_p\pm\epsilon_q)\oplus \Lambda_{\ell}: p,q\in I \text{ or } p,q\in J\}.\end{equation}
 We first show that $\Psi$ is contained in the right hand side of \eqref{1}. Assume by contradiction that $(\epsilon_p+\epsilon_q)\in \text{Gr}(\Psi)$ for some $p\in I$, $q\in J$; let $y\in \Lambda_{\ell}$ such that $(\epsilon_p+\epsilon_q)\oplus y\in \Psi$. Since $p\in I$ we can choose $y_{p}\in\Lambda_{\ell}$ such that $(\epsilon_p\oplus y_p)\in \Psi$ and hence $$(\epsilon_p+\epsilon_q)-\epsilon_p\oplus (y-y_p)\in \Psi,$$
which is a contradiction to the choice of $q$. Clearly $Z_{\epsilon_i}(\Psi)\subseteq Z_{I}$ for all $i\in I$. Let $p\in J$ and $y\in Z_{\epsilon_p}(\Psi)$ such that $y\in Z_{I}$. This would imply that there exists $k\in T(I)$, $y',y''\in\Lambda_\ell$ and $i\in I$ such that $y=(x_k+y')$ and $(x_k+y'')\in Z_{\epsilon_i}(\Psi)$. Since $i\in I$ we can also choose $z\in Z_{\epsilon_i}(\Psi)\cap \Lambda_\ell$. Hence
$$(\epsilon_i+\epsilon_p)-\epsilon_i\oplus (2x_k+y'+y''-z)\in \Psi,$$
which contradicts again $p\in J$. Therefore $Z_{\epsilon_j}(\Psi)\subseteq Z_{J}$ for all $j\in J$. So we proved that $\Psi$ is contained in the right hand side of \eqref{1}.
In order to show equality we will prove that the right hand side is a proper closed subset of $\Phi$. Since $J$ is non--empty and $Z_{J}\subseteq \Lambda_s\backslash\Lambda_{\ell}$ it is definitely proper. The closedness follows from $(Z_{I}+Z_{J})\cap \Lambda_\ell=\emptyset$ and $Z_{I}+\Lambda_{\ell}\subseteq Z_{I}$,\ $Z_{J}+\Lambda_{\ell}\subseteq Z_{J}$. Thus \eqref{1} follows by the maximality of $\Psi$. 

For the converse direction let $\Psi$ as in \eqref{Bnform} and assume without loss of generality that $\Lambda_\ell\subseteq Z_1$. The same calculation as above shows that $\Psi$ is a closed subroot system. Suppose that $\Psi\subseteq\Psi'\subseteq \Phi$, where $\Psi'$ is a maximal subroot system. If $\text{Gr}(\Psi')$ is semi--closed we must have $\Psi=\Psi'$, since $\Psi'$ is by the above calculations again of the form \eqref{Bnform} and contains $\Psi$. Otherwise $\text{Gr}(\Psi')$ is closed and since $\mathring{\Phi}_\ell\subseteq \mathring{\Phi}_s+\mathring{\Phi}_s$ we have $\text{Gr}(\Psi')=\mathring{\Phi}$. Now noting that any $\gamma\in\mathring{\Phi}_\ell$ with $\gamma\notin \text{Gr}(\Psi)$ is of the form $\gamma=\pm\epsilon_p\pm \epsilon_q$ where $p\notin J, q\in J$ we immediately get $\gamma \oplus \Lambda_\ell\subseteq \Psi'$ since $\pm \epsilon_p\oplus \Lambda_\ell \subseteq \Psi'$. This already implies $\Psi'=\Phi$, since if $i\notin J$ we get
$$\epsilon_i\oplus Z_1 + (\epsilon_j-\epsilon_i)\oplus \Lambda_\ell\subseteq \Psi' \Rightarrow \epsilon_j\oplus Z_1\subseteq \Psi',\ \ \forall j\in J.$$  Similarly we can show $\epsilon_i\oplus Z_2\subseteq \Psi'$ for all $i\notin J$. 
\endproof

\section{Classification - Part 2: The full closed reduced case}\label{section5}

The aim of this section is to classify all maximal closed subroot systems $\Psi$ of $\Phi$ satisfying $\text{Gr}(\Psi)=\mathring{\Phi}$ and $\mathring{\Phi}$ is reduced.
\subsection{} \textit{We assume in this subsection that $\mathring{\Phi}$ is simply--laced or of type $C_n$, $F_4$ or $G_2$.} In this case we have by Lemma~\ref{inclusionen} (or our mild assumption) that $\Lambda_s$ is a subgroup (recall the convention $\Lambda_\ell=\Lambda_s$ in the simply--laced case). 
\begin{thm}\label{part2g2f4}Let $\Phi$ an irreducible affine reflection system with $\mathring{\Phi}$ of simply--laced type or of type $C_n$, $F_4$ or $G_2$. We have that $\Psi\subseteq \Phi$ is a maximal closed subroot system with $\text{Gr}(\Psi)=\mathring{\Phi}$ if and only if there exists a maximal subgroup $H\subseteq\Lambda_s$ and a $\mathbb{Z}$--linear function $\tau: \mathring{\Phi} \to \Lambda_s/H$ such that $\tau(\beta)\cap \Lambda_\ell\neq \emptyset$ for all $\beta\in \mathring{\Phi}_\ell$ and
\begin{equation}\label{4432}
\Psi=\{\a \oplus \tau(\a):  \a\in\mathring{\Phi}_s\}\cup \{\a \oplus (\tau(\a)\cap \Lambda_\ell):  \a\in\mathring{\Phi}_\ell\}.
\end{equation}
\proof
Let $\Psi\subseteq \Phi$ a maximal closed subroot system. Since $\text{Gr}(\Psi)=\mathring{\Phi}$ (and hence irreducible) we can find a subgroup $Z_s\subseteq \Lambda_s$ and a subset $Z_\ell\subseteq \Lambda_\ell$ such that $Z_\ell\subseteq Z_s$ and a $\bz$--linear function $p:\mathring{\Phi}\rightarrow \Lambda_s$, $\a\mapsto p_\a\in \Lambda_\a$ such that 
$$\Psi=\{\a \oplus p_\a+Z_s:  \a\in\mathring{\Phi}_s\}\cup \{\a \oplus p_\a+Z_\ell:  \a\in\mathring{\Phi}_\ell\}.$$
Clearly, $Z_s\subsetneq \Lambda_s$. Let $H\subseteq \Lambda_s$ a maximal subgroup containing $Z_s$ and set 
$$\Psi':=\{\a \oplus p_\a+H:  \a\in\mathring{\Phi}_s\}\cup \{\a \oplus (p_\a+H)\cap \Lambda_\ell:  \a\in\mathring{\Phi}_\ell\}.$$
Clearly $\Psi\subseteq \Psi'\subseteq \Phi$. Moreover, it is routine to check that $\Psi'$ is closed using Lemma~\ref{hum} and hence $\Psi=\Psi'$. Setting $\tau(\a)=p_\a+H$ for all $\a\in\mathring{\Phi}$ we get the desired property.
Conversely, suppose that $\Psi$ is of the form \eqref{4432} for some maximal subgroup $H$ and $\bz$--linear function $\tau$ such that $\Psi\subseteq \Psi' \subseteq \Phi$, where $\Psi'$ is a maximal closed subroot system. Since $\text{Gr}(\Psi')=\mathring{\Phi}$ we again have that $\Psi'$ is of the form \eqref{4432} for some maximal subgroup $H'$ and $\bz$--linear function $\tau'$. Hence $H\subseteq H'$
and the maximality of $H$ implies $H'=H$ (and therefore $\Psi=\Psi'$) or $H'=\Lambda_s$ (and therefore $\Psi'=\Phi$).
\endproof
\end{thm}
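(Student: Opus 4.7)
The argument follows the template of Theorems \ref{Cng2case} and \ref{F4case}: extract an extension datum for $\Psi$ via Proposition \ref{prop2a}, enlarge the short-root stratum to a maximal subgroup, and verify closedness.

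\textbf{Forward direction.} Let $\Psi$ be a maximal closed subroot system with $\text{Gr}(\Psi)=\mathring{\Phi}$. By Proposition \ref{prop2a}, $\Psi$ corresponds to an extension datum of type $(\mathring{\Phi},Y')$, and our hypothesis on $\mathring{\Phi}$ (together with Lemma \ref{inclusionen} and the mild assumption) yields a subgroup $Z_s \subseteq \Lambda_s$, a subset $Z_\ell \subseteq Z_s \cap \Lambda_\ell$, and a $\bz$-linear function $p:\mathring{\Phi}\to\Lambda_s$ with $p_\alpha \in \Lambda_\alpha$ such that
$$\Psi = \{\alpha\oplus(p_\alpha+Z_s): \alpha\in \mathring{\Phi}_s\} \cup \{\alpha\oplus(p_\alpha+Z_\ell): \alpha\in \mathring{\Phi}_\ell\}.$$
A short inspection shows $Z_s \subsetneq \Lambda_s$: otherwise the closedness of $\Psi$ combined with the extension-datum relations from Lemma \ref{inclusionen} propagates $Z_s=\Lambda_s$ to $Z_\ell=\Lambda_\ell$ (immediately in the simply-laced case, and via the short$+$short$\to$long sums together with $2\Lambda_s\subseteq \Lambda_\ell$ in the other cases), giving $\Psi = \Phi^{\text{re}}$ and contradicting $\Psi \subsetneq \Phi^{\text{re}}$. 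By Zorn's lemma choose a maximal subgroup $H \subseteq \Lambda_s$ containing $Z_s$, set $\tau(\alpha):=p_\alpha+H$, and let $\Psi'$ denote the right-hand side of \eqref{4432}. Then $\Psi \subseteq \Psi' \subsetneq \Phi$ (properness because $H$ is proper), and a case-by-case check via Lemma \ref{inclusionen} shows $\Psi'$ is closed. Maximality of $\Psi$ forces $\Psi=\Psi'$; the requirement $\tau(\beta)\cap\Lambda_\ell\neq\emptyset$ for $\beta\in\mathring{\Phi}_\ell$ is automatic since $\mathring{\Phi}_\ell \subseteq \text{Gr}(\Psi)$ supplies a witness $y\in \Lambda_\ell$ with $\beta\oplus y\in \Psi$.

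\textbf{Converse.} Given $(H,\tau)$ as in the statement, closedness of $\Psi$ reduces via Lemma \ref{hum} to a finite list of sum types: $\bz$-linearity of $\tau$ handles short$+$short$\to$short and long$+$long$\to$long (the latter needing $\Lambda_\ell+\Lambda_\ell \subseteq \Lambda_\ell$, which holds in the simply-laced case as well as in $F_4$ and $G_2$ by Lemma \ref{inclusionen}, and is vacuous in $C_n$ since two long roots there never sum to a long root); the inclusion $\Lambda_s+\Lambda_\ell\subseteq \Lambda_s$ handles short$+$long$\to$short; and short$+$short$\to$long is controlled by $\tau(\alpha)+\tau(\beta)=\tau(\alpha+\beta)$ intersected with $\Lambda_\ell$. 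For maximality, suppose $\Psi\subseteq\Psi'\subsetneq\Phi$ with $\Psi'$ maximal closed. Then $\text{Gr}(\Psi')=\mathring{\Phi}$, so the forward direction represents $\Psi'$ in the same form with a maximal subgroup $H'\subseteq \Lambda_s$. Comparing short-root strata of $\Psi\subseteq\Psi'$ forces $H\subseteq H'$, and maximality of $H$ leaves only $H'=H$ (giving $\Psi=\Psi'$) or $H'=\Lambda_s$ (giving $\Psi'=\Phi$).

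\textbf{Main obstacle.} The technical heart is the closedness verification for $\Psi'$: one must match each sum type (short/long)$+$(short/long)$\to$(short/long) to the precise containment from Lemma \ref{inclusionen}, and pay particular attention to isolating why the long$+$long case requires $\Lambda_\ell$ to be additively closed only in $F_4$ and $G_2$ while being automatically trivial in $C_n$. A secondary subtlety is the argument $Z_s \subsetneq \Lambda_s$, which bifurcates cleanly between the simply-laced and non-simply-laced cases.
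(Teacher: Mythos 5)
Your proof is correct and follows essentially the same route as the paper: represent $\Psi$ via a subgroup $Z_s\subseteq\Lambda_s$ and a $\bz$--linear function, enlarge $Z_s$ to a maximal subgroup $H$, check that the resulting $\Psi'$ is closed case by case via Lemma~\ref{hum}, conclude $\Psi=\Psi'$ by maximality, and in the converse compare $H$ with the maximal subgroup attached to any maximal closed $\Psi'\supseteq\Psi$. Two cosmetic points: the auxiliary inclusion $2\Lambda_s\subseteq\Lambda_\ell$ fails in type $G_2$ (there only $3\Lambda_s\subseteq\Lambda_\ell$ holds) but is not needed, since the short$+$short$\to$long sums together with $\Lambda_s$ being a group already propagate $Z_s=\Lambda_s$ to $Z_\ell=\Lambda_\ell$; and Zorn's lemma does not literally yield a maximal subgroup of $\Lambda_s$ containing $Z_s$ in an arbitrary abelian group (the paper makes the same tacit existence assumption), though this is harmless because the maximality of $\Psi$ in fact forces $Z_s$ itself to be a maximal subgroup.
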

 \subsection{}\textit{We assume in this subsection that $\mathring{\Phi}$ is of type $B_n$.} In this case we have by Lemma~\ref{inclusionen} (or our mild assumption) that $\Lambda_\ell$ is a subgroup.
 \begin{thm}
\label{Bnformcasefull}Let $\Phi$ an irreducible affine reflection system with $\mathring{\Phi}$ of type $B_n$. We have that $\Psi\subseteq \Phi$ is a maximal closed subroot system with $\text{Gr}(\Psi)=\mathring{\Phi}$ if and only if 
\begin{equation}\label{prfcl00} \Psi=\{\a \oplus \tau(\a)+Z:  \a\in\mathring{\Phi}_s\}\cup \{\a \oplus \tau(\a)+H:  \a\in\mathring{\Phi}_\ell\},\end{equation}
where $H=\Lambda_\ell$ or $H\subseteq\Lambda_\ell$ is a maximal subgroup and $\tau: \mathring{\Phi} \to \Lambda_s$ is a $\mathbb{Z}$--linear function satisfying $\tau(\beta)\in \Lambda_\ell$ for all $\beta\in \mathring{\Phi}_\ell$ and $Z$ is a union of cosets modulo $H$ maximal with the property that $H\subseteq Z$, $Z=-Z$ and 
\begin{equation}\label{prfcl} (x+H), (y+H)\subseteq Z \text{ and } (x+H)\neq (y+H)\Rightarrow (x+y) \notin \Lambda_\ell.\end{equation}
 \proof
 Let $\Psi\subseteq \Phi$ be a maximal closed subroot system. Since $\text{Gr}(\Psi)=\mathring{\Phi}$ (and hence irreducible) we can find a subset $Z_s\subseteq \Lambda_s$ and a subgroup $Z_\ell\subseteq \Lambda_\ell$ such that $Z_\ell\subseteq Z_s$ and a $\bz$--linear function $\tau:\mathring{\Phi}\rightarrow \Lambda_s$ such that 
$$\Psi=\{\a \oplus \tau(\a)+Z_s:  \a\in\mathring{\Phi}_s\}\cup \{\a \oplus \tau(\a)+Z_\ell:  \a\in\mathring{\Phi}_\ell\}.$$
We set $H=\Lambda_\ell$ if $Z_\ell=\Lambda_\ell$ and otherwise we let $H$ a maximal subgroup of $\Lambda_\ell$ containing $Z_\ell$. Obviously we can write 
$$\Lambda_s=\dot{\bigcup}_{r\in S}(x_r+H),$$
for some index set $S$. We set $S'=\{r \in S: (x_r+H)\cap Z_s \neq \emptyset\}$ and define $Z$ as the union of cosets modulo $H$ with index set $S'$. First observe that if $x+H\subseteq Z$, then there exists $h\in H$ such that $x+h\in Z_s$. It follows $2x\in H$ and thus $x+H=-x+H$, which gives $Z=-Z$. Let us show that $Z$ satisfies \eqref{prfcl}. Let $\alpha,\beta$ short roots such that the sum is a long root. Further let $x+H, y+H\subseteq Z$ , i.e. there exists $h',h''\in H$ such that $x+h', y+h''\in Z_s$. Now suppose for contradiction that $x+y\in \Lambda_\ell$.  It follows $\tau(\a+\beta)+x+y+h'+h''\in \Lambda_\ell$ and hence $x+y+h'+h''\in Z_\ell$ (because $\Psi$ is closed). But this gives $x+y\in H$ and hence $x+H=-y+H=y+H$. It follows that $Z$ satisfies \eqref{prfcl}. Obviously,
$$\Psi\subseteq \Psi':=\{\a \oplus \tau(\a)+Z:  \a\in\mathring{\Phi}_s\}\cup \{\a \oplus  \tau(\a)+H:  \a\in\mathring{\Phi}_\ell\}\subseteq \Phi.$$
Moreover, $\Psi'$ is closed since $Z$ satisfies \eqref{prfcl}. Note that $\Psi'=\Phi$ would imply $H=\Lambda_\ell$ and thus $Z_\ell=\Lambda_\ell$. Now it is easy to see that we must have $Z_s=\Lambda_s$ in this case, which is a contradiction. So $\Psi'\subsetneq \Phi$ and the maximality of $\Psi$ implies $\Psi=\Psi'$.

 Conversely assume that $\Psi$ is of the form \eqref{prfcl00} and satisfies the desired properties and let $Z=\cup_{r\in T} (x_r+H)$ for some index set $T.$  If $H=\Lambda_\ell$, then condition \eqref{prfcl} is redundant and hence $\Psi$ is clearly a maximal closed subroot system by the maximality of $Z$. Let $H$ a proper maximal subgroup of $\Lambda_\ell$. Since $x+y\notin \Lambda_\ell$ implies $x+\Lambda_\ell \neq y+\Lambda_\ell$, we have $\{x_r+\Lambda_\ell : r\in T\}$ are distinct elements of $\Lambda_s/\Lambda_\ell.$
Write $\Lambda_s=\cup_{s\in S}(x_s+\Lambda_\ell)$, then we have $T\subseteq S.$ Now we see that $x_s+H\neq x_{s'}+H$ for all $s\neq s'\in S.$ Moreover we have
$x_s+x_{s'}\notin \Lambda_s$ for all $s\neq s'\in S.$ Hence $Z'=\cup_{s\in S}(x_s+H)$ satisfies \eqref{prfcl} and $Z\subseteq Z'\subsetneq \Lambda_s.$
Since $Z$ is maximal, we must have
$Z=Z'$. This implies $T=S$ and $\{x_r+\Lambda_\ell : r\in T\}$ exhaust $\Lambda_s.$ This in particular implies $Z+\Lambda_\ell=\Lambda_s.$ Now we are able to prove the maximality of $\Psi$.

Suppose $\Psi\subseteq \Psi'$ for some maximal closed subroot system $\Psi'.$ Then we know that there exists $\tau', Z', H'$ such that 
$$\Psi'=\{\a \oplus \tau'(\a)+Z':  \a\in\mathring{\Phi}_s\}\cup \{\a \oplus \tau'(\a)+H':  \a\in\mathring{\Phi}_\ell\}.$$
We must have $H=H'$ or $H'=\Lambda_\ell$ since $H\subseteq H'\subseteq \Lambda_\ell$ and $H$ is maximal. Suppose that $H=H'$. Since $Z\subseteq (\tau'(\a)-\tau(\a))+Z'$, $\a\in \mathring{\Phi}_s$, and $(\tau'(\a)-\tau(\a))+Z'$ satisfies also \eqref{prfcl} we get by the maximality of $Z$ that $\tau(\a)+Z=\tau'(\a)+Z'$ and thus $\Psi=\Psi'$. 
So assume that $H'=\Lambda_\ell.$ It follows that $Z'$ is a union of costes modulo $\Lambda_\ell$ and since $Z+\Lambda_\ell=\Lambda_s$ we obtain that $\tau'(\a)+Z'=\Lambda_s$ for all short roots $\a$. This implies $\Psi'=\Phi.$
 \endproof
 \end{thm}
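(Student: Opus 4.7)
My plan is to proceed in two directions, leveraging the structural decomposition of $\Psi$ afforded by Section~\ref{32}.

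For the forward direction, I would apply that framework to extract a $\mathbb{Z}$--linear function $\tau:\mathring{\Phi}\to\Lambda_s$ (with $\tau(\beta)\in\Lambda_\ell$ for long $\beta$, since long roots in $B_n$ are non--divisible) together with subsets $Z_s\subseteq\Lambda_s$ and $Z_\ell\subseteq\Lambda_\ell$ satisfying $Z_\ell\subseteq Z_s$ and
$$\Psi=\{\alpha\oplus(\tau(\alpha)+Z_s):\alpha\in\mathring{\Phi}_s\}\cup\{\alpha\oplus(\tau(\alpha)+Z_\ell):\alpha\in\mathring{\Phi}_\ell\}.$$
The closedness of $\Psi$ then yields two crucial properties: (i) $Z_\ell$ is a subgroup of $\Lambda_\ell$ (using long+long=long closure for $n\geq 3$ and the mild assumption for $B_2$), and (ii) $(Z_s+Z_s)\cap\Lambda_\ell\subseteq Z_\ell$, which comes from the short+short=long closure combined with $\mathbb{Z}$--linearity of $\tau$.

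Next I would set $H:=\Lambda_\ell$ when $Z_\ell=\Lambda_\ell$, and otherwise let $H$ be a maximal subgroup of $\Lambda_\ell$ containing $Z_\ell$; then define $Z$ as the union of those cosets $x+H$ of $H$ in $\Lambda_s$ that meet $Z_s$. I would verify the three axioms for $Z$: the inclusion $H\subseteq Z$ is immediate from $0\in Z_s\cap H$; the symmetry $Z=-Z$ follows from the observation that any $x+h\in Z_s$ satisfies $2(x+h)=2x+2h\in(Z_s+Z_s)\cap\Lambda_\ell\subseteq Z_\ell\subseteq H$ (using $2\Lambda_s\subseteq\Lambda_\ell$), forcing $2x\in H$; and property \eqref{prfcl} transfers directly from (ii). Letting $\Psi'$ be the candidate built from $(H,Z,\tau)$, one checks $\Psi\subseteq\Psi'\subsetneq\Phi$ with $\Psi'$ closed, forcing $\Psi=\Psi'$ by maximality of $\Psi$. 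Maximality of $Z$ among cosets--unions satisfying the three constraints is then inherited from maximality of $\Psi$, since any strict enlargement would produce a strictly larger proper closed subroot system.

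For the converse direction, given data $(H,Z,\tau)$, I would first verify closedness of $\Psi$ by a case analysis on Lemma~\ref{hum}: short+long=short uses $Z+H\subseteq Z$ (as $Z$ is a union of $H$--cosets); long+long=long uses that $H$ is a subgroup; short+short=long uses \eqref{prfcl} combined with $Z=-Z$, so that whenever the sum lies in $\Lambda_\ell$ the two cosets coincide and the sum lands in $H$. For maximality, suppose $\Psi\subseteq\Psi'$ with $\Psi'$ maximal closed. Applying the forward direction to $\Psi'$ gives data $(H',Z',\tau')$, and the containment forces $H\subseteq H'$, whence $H=H'$ or $H'=\Lambda_\ell$ by maximality of $H$. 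In the first subcase, an appropriate translate of $Z'$ satisfies the three axioms with respect to $H$, so maximality of $Z$ forces $\Psi=\Psi'$. In the second subcase (when $H$ is a proper maximal subgroup of $\Lambda_\ell$), I would exhibit $Z^\circ:=\bigcup_{s\in S}(x_s+H)$ where $\{x_s+\Lambda_\ell\}_{s\in S}$ enumerates all cosets of $\Lambda_\ell$ in $\Lambda_s$, noting that $Z^\circ$ verifies \eqref{prfcl} because $\Lambda_s/\Lambda_\ell$ has exponent $2$, so distinct cosets never sum into $\Lambda_\ell$. Maximality of $Z$ then gives $Z=Z^\circ$, hence $Z+\Lambda_\ell=\Lambda_s$ and $\Psi'=\Phi$, the desired contradiction.

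The main obstacle I anticipate is the second subcase of the converse direction, where passing from maximality of $Z$ to $Z+\Lambda_\ell=\Lambda_s$ requires both the explicit construction of $Z^\circ$ and the verification that the exponent--$2$ structure of $\Lambda_s/\Lambda_\ell$ preserves \eqref{prfcl}. A secondary technical point is confirming that $Z_\ell$ is a subgroup in the forward direction when $\mathring{\Phi}$ has rank $2$, which requires explicitly invoking the mild assumption on $\Lambda_\ell$.
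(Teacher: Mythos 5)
Your proposal is correct and follows essentially the same route as the paper: the same Section~\ref{32} decomposition into $(\tau,Z_s,Z_\ell)$, the same choice of $H$ and of $Z$ as the union of $H$--cosets meeting $Z_s$ (with $Z=-Z$ and \eqref{prfcl} extracted from closedness), and the same two--subcase maximality argument in the converse. The one place where you are exactly as terse as the paper is the subcase $H'=\Lambda_\ell$: your enlarged set $Z^\circ$ (the paper's $Z'$) is verified against \eqref{prfcl} but not against the symmetry requirement $Z^\circ=-Z^\circ$, which is also needed before the maximality of $Z$ can be invoked to conclude $Z+\Lambda_\ell=\Lambda_s$ --- a gloss you share with the published proof.
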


\section{Classification - Part 3: The proper closed reduced case}\label{section6}
The aim of this section is to classify all maximal closed subroot systems $\Psi$ of $\Phi$ satisfying the property that $\text{Gr}(\Psi)$ is proper and closed and $\mathring{\Phi}$ is reduced.  Recall the classification of maximal closed subroot systems of finite root systems from Theorem~\ref{BdS} and Corollary~\ref{BdSnr}.
\subsection{} When the gradient is proper and closed the classification is simple.
\begin{thm}\label{propgrm}
Let $\Phi$ an irreducible affine reflection system with reduced finite root system $\mathring{\Phi}$. We have that $\Psi\subseteq \Phi$ is a maximal closed subroot system with proper closed gradient if and only if
\begin{equation}\label{propclo}
\Psi=\{\a \oplus \Lambda_s:  \a\in \mathring{\Phi}' \cap \mathring{\Phi}_s \}\cup \{\a \oplus \Lambda_\ell:  \a\in \mathring{\Phi}' \cap \mathring{\Phi}_\ell\},
\end{equation}
where 
\begin{enumerate}
\item $\mathring{\Phi}'$ is any maximal closed subroot system of $\mathring{\Phi}$ if $\Lambda_\ell=\Lambda_s$, 
\item $\mathring{\Phi}'$ is any maximal closed subroot system of $\mathring{\Phi}$ such that $\mathring{\Phi}' \cap \mathring{\Phi}_s \neq \emptyset$ if $\Lambda_\ell\subsetneq \Lambda_s$.
\end{enumerate}
\proof
Let $\Psi\subseteq \Phi$ a maximal closed subroot system with proper closed gradient. If $\text{Gr}(\Psi)$ is not a maximal closed subroot system of $\mathring{\Phi}$ we can find $\mathring{\Phi}'$ a maximal closed subroot system in $\mathring{\Phi}$ such that 
$$\Psi\subsetneq \{\a \oplus \Lambda_s:  \a\in \mathring{\Phi}'\cap \mathring{\Phi}_s\}\cup \{\a \oplus \Lambda_\ell:  \a\in \mathring{\Phi}'\cap \mathring{\Phi}_\ell\}.$$
The maximality of $\Psi$ implies $\mathring{\Phi}'=\mathring{\Phi}$, which is a contradiction. Hence $\text{Gr}(\Psi)$ is a maximal closed subroot system and clearly $\Psi$ must be of the form \eqref{propclo}. Now suppose in addition that $\Lambda_\ell\subsetneq \Lambda_s$
and $\text{Gr}(\Psi)\cap \mathring{\Phi}_s=\emptyset$. Then we get 
$$\Psi\subseteq \Psi':=\{\alpha\oplus \Lambda_\ell: \alpha\in \mathring{\Phi}_\ell\}\subsetneq \Phi$$
and $\Psi'$ is a closed subroot system in $\Phi$ (cf. Lemma~\ref{hum}). This implies $\text{Gr}(\Psi)=\mathring{\Phi}_\ell$ and hence $\mathring{\Phi}$ cannot be of type $C_n, n\geq 3$ (see Theorem~\ref{BdS}). It follows from Lemma~\ref{inclusionen} that $\Lambda_\ell$ is a group and hence
$$\Psi'':=\{\alpha\oplus \Lambda_\ell: \alpha\in \mathring{\Phi}\}\subsetneq \Phi$$
is also a closed subroot system of $\Phi$. We get 
$$\Psi=\Psi' \subsetneq \Psi'' \subsetneq \Phi,$$
which is a contradiction and thus $\text{Gr}(\Psi)\cap \mathring{\Phi}_s\neq\emptyset$.

For the converse direction assume that $\Psi$ is of the form \eqref{propclo} and $\Psi\subseteq \Psi'$, where $\Psi'$ is a maximal closed subroot system. In the untwisted case, i.e. $\Lambda_\ell=\Lambda_s$, we are done since $\text{Gr}(\Psi')$ must be a closed subroot system by Lemma~\ref{lem3} and $\text{Gr}(\Psi)\subseteq\text{Gr}(\Psi')$. If $\Lambda_\ell\subsetneq \Lambda_s$ we will use the additional information  $\text{Gr}(\Psi)\cap \mathring{\Phi}_s\neq\emptyset$ to show that $\text{Gr}(\Psi')$ must be a closed subroot system. This would finish the proof as in the untwisted case. Assume for contradiction that $\text{Gr}(\Psi')$ is semi--closed. From the classification in Section~\ref{section4} we get that $Z_\alpha(\Psi')\subsetneq \Lambda_s$ for all short roots $\alpha\in \text{Gr}(\Psi')$, which is a contradiction to $\Psi\subseteq \Psi'$.
\endproof
\end{thm}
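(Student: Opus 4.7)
The plan is to handle the two implications separately and to pivot everywhere on the observation that a maximal closed subroot system is ``as large as possible'' in two directions: in its gradient and in each slice $Z_\a(\Psi)$.

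For the forward direction, I would start by letting $\Psi$ be a maximal closed subroot system with $\text{Gr}(\Psi)$ proper and closed and choose a maximal closed subroot system $\mathring{\Phi}'$ of $\mathring{\Phi}$ with $\text{Gr}(\Psi)\subseteq\mathring{\Phi}'$. Define
$$\Psi^{\flat}=\{\a\oplus\Lambda_s:\a\in\mathring{\Phi}'\cap\mathring{\Phi}_s\}\cup\{\a\oplus\Lambda_\ell:\a\in\mathring{\Phi}'\cap\mathring{\Phi}_\ell\}.$$
By Lemma~\ref{hum} together with the inclusions $\Lambda_\ell+\Lambda_\ell\subseteq\Lambda_\ell$ (from Lemma~\ref{inclusionen} and our mild assumption) and $\Lambda_s+\Lambda_\ell\subseteq\Lambda_s$, the set $\Psi^{\flat}$ is closed, it properly sits inside $\Phi$ because $\mathring{\Phi}'\subsetneq\mathring{\Phi}$, and it contains $\Psi$. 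Maximality of $\Psi$ then forces $\Psi=\Psi^{\flat}$, which both identifies the shape \eqref{propclo} and shows that $\text{Gr}(\Psi)=\mathring{\Phi}'$ is itself a maximal closed subroot system of $\mathring{\Phi}$. To finish the forward direction I would treat the twisted case $\Lambda_\ell\subsetneq\Lambda_s$ and rule out $\text{Gr}(\Psi)\cap\mathring{\Phi}_s=\emptyset$: if all roots in the gradient are long, then $\text{Gr}(\Psi)=\mathring{\Phi}_\ell$ and one checks from the Borel--de Siebenthal list (Theorem~\ref{BdS}) that the only reduced irreducible types in which $\mathring{\Phi}_\ell$ forms a maximal closed subroot system are $B_n$, $F_4$, $G_2$; in each of these $\Lambda_\ell$ is a subgroup by Lemma~\ref{inclusionen}, so $\{\a\oplus\Lambda_\ell:\a\in\mathring{\Phi}\}$ is a closed subroot system strictly between $\Psi$ and $\Phi$, contradicting maximality of $\Psi$.

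For the converse, let $\Psi$ have the form \eqref{propclo} with $\mathring{\Phi}'$ as specified and pick a maximal closed subroot system $\Psi'\supseteq\Psi$ of $\Phi$. I would split on the nature of $\text{Gr}(\Psi')$. In the untwisted case $\Lambda_\ell=\Lambda_s$, Lemma~\ref{lem3} immediately tells us that $\text{Gr}(\Psi')$ is closed and, together with maximality, that $\text{Gr}(\Psi')\in\{\mathring{\Phi}',\mathring{\Phi}\}$; in the first subcase $\Psi'=\Psi$ since $Z_\a(\Psi)=\Lambda_\a$ already, and in the second subcase $\Psi'=\Phi$. The untwisted case therefore falls out cleanly.

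The hard part, and the main obstacle, is the twisted case $\Lambda_\ell\subsetneq\Lambda_s$, where \emph{a priori} $\text{Gr}(\Psi')$ may be only semi--closed and we have to exploit $\mathring{\Phi}'\cap\mathring{\Phi}_s\neq\emptyset$ to prevent this. The plan is to assume that $\text{Gr}(\Psi')$ is semi--closed and to derive a contradiction by invoking Theorems~\ref{Cng2case}, \ref{F4case} and \ref{Bncase}: in each case, a maximal closed subroot system with semi--closed gradient has $Z_\a(\Psi')$ a proper subset of $\Lambda_s$ for every short $\a\in\text{Gr}(\Psi')$ (via the proper subgroup $H$ in types $C_n, G_2, F_4$ and via the proper subset $Z_1$ or $Z_2$ in type $B_n$). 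But the hypothesis $\mathring{\Phi}'\cap\mathring{\Phi}_s\neq\emptyset$ gives a short $\a\in\text{Gr}(\Psi)\subseteq\text{Gr}(\Psi')$ with $Z_\a(\Psi)=\Lambda_s$, so $Z_\a(\Psi')=\Lambda_s$ as well, the desired contradiction. Hence $\text{Gr}(\Psi')$ is closed, and we conclude exactly as in the untwisted case.
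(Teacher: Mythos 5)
Your proof is correct and follows essentially the same route as the paper: the forward direction via the full lift of a maximal closed $\mathring{\Phi}'$ containing the gradient together with the exclusion of type $C_n$ when the gradient avoids short roots, and the converse via a maximal $\Psi'\supseteq\Psi$, Lemma~\ref{lem3} in the untwisted case, and the Section~\ref{section4} classification to rule out a semi--closed gradient in the twisted case. One harmless slip: in type $F_4$ the long roots form a $D_4$ that is properly contained in the closed subsystem $B_4$, so $\mathring{\Phi}_\ell$ is \emph{not} maximal closed there; this does not affect your argument, since your list is only used to exclude type $C_n$, and $\Lambda_\ell$ is a subgroup in $F_4$ in any case.
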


\section{Classification - Part 4: The non--reduced case}\label{section7}
The aim of this section is to classify all maximal closed subroot systems $\Psi$ of $\Phi$ where $\mathring{\Phi}$ is non--reduced of type $BC_n, n\ge 2$. 
\subsection{}
For $J\subseteq\{1,\dots, n\}$ define 
$$A_J:=\big\{\pm2\epsilon_i,\ \pm\epsilon_j:  i\in\{1,\dots n\},\ j\in J\big\}\cup 
\big\{\pm\epsilon_r\pm\epsilon_s: \ r, s\in J \mbox{ or } r,s\notin J,\ r\neq s \big\}$$ and denote by
$\wh{A_J}$ the lift of $A_J$ in $\Phi$, i.e. 
 $$\wh{A_J}=\{\alpha\oplus \Lambda_\a: \a\in A_J\}.$$
Note that $A_J$ is a closed subroot system of $BC_n$ of 
type $\text{$C_{n-r}\oplus BC_{r}$}$ if $|J|=r$. Hence, the lift $\wh{A_J}$ of $A_J$ is a closed subroot system in $\Phi$. Further, set 
$$\Phi^{B_n}=\{\a\oplus y : \a\in \mathring{\Phi}_s\cup \mathring{\Phi}_\ell,\ y\in \Lambda_\a \}\subseteq \Phi.$$
Our main result is the following. 
\begin{thm}\label{mainBCn}
Let $\Phi$ be an irreducible affine reflection system with $\mathring{\Phi}$ of type $BC_n$, $n\geq 2$. We have that $\Psi\subseteq \Phi$ is a maximal closed subroot system if and only if one of the following cases hold:
 \begin{enumerate}
 \item  $\Psi=\wh{A_{J}}, \text{ for some } \emptyset\subseteq J\subsetneq\{1,\dots, n\}$ and $J\neq\emptyset$ if $\Lambda_s\neq \Lambda_\ell$.
  \item There exists a maximal closed subroot system $\Psi'$ in $\Phi^{B_n}$ with semi--closed gradient such that
 $$\Psi=\Psi'\cup \{\pm 2\epsilon_i\oplus \Lambda_d: 1\leq i\leq n\}.$$
Recall that $\Psi'$ is by Theorem~\ref{Bncase} of the form \eqref{Bnform}.
 \item There exists a maximal closed subroot system $\Psi'$ in $\Phi^{B_n}$ with full gradient such that
 $$\Psi=\Psi'\cup \{\pm 2\epsilon_i\oplus (Z_{\pm\epsilon_i-\epsilon_{i+1}}(\Psi')+Z_{\pm\epsilon_i+\epsilon_{i+1}}(\Psi'))\cap\Lambda_d: 1\leq i\leq n\},$$
 where we understand $\epsilon_{n+1}=\epsilon_1$. Recall that $\Psi'$ is by Theorem~\ref{Bnformcasefull} of the form \eqref{prfcl00}.
 \end{enumerate}
\end{thm}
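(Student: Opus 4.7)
The plan is to split the analysis according to the nature of the gradient $\mathrm{Gr}(\Psi)\subseteq BC_n$, which by the general theory of Section~\ref{section3} is either proper closed, equal to all of $\mathring{\Phi}$, or semi--closed. These three alternatives will correspond to the three cases in the statement. The key technical ingredients are the closure relations of Lemma~\ref{inclusionen}(iv) for $BC_n$--extension data together with the decomposition $2\epsilon_i=(\epsilon_i-\epsilon_{i+1})+(\epsilon_i+\epsilon_{i+1})$, which ties the divisible layer of $\Psi$ to sums of layers over pairs of long roots in $\Psi'=\Psi\cap\Phi^{B_n}$.

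For Case~(1), assuming $\mathrm{Gr}(\Psi)$ is proper closed, an argument along the lines of Lemma~\ref{lem3} forces $\mathrm{Gr}(\Psi)$ to be a maximal closed subroot system of $BC_n$. By Corollary~\ref{BdSnr} and the Borel--de Siebenthal classification for $B_n$ (yielding $B_{n-1}$, $D_n$, and $D_k\oplus B_{n-k}$), these are, up to $W(BC_n)$--conjugacy, precisely the systems $A_J$ for $J\subsetneq\{1,\dots,n\}$. One checks directly via Lemma~\ref{inclusionen}(iv) that $\wh{A_J}$ is always a closed subroot system of $\Phi$, and the maximality of $\Psi$ then forces $Z_\alpha(\Psi)=\Lambda_\alpha$ for every $\alpha\in A_J$, so $\Psi=\wh{A_J}$. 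The exclusion of $J=\emptyset$ in the twisted case is handled separately: if $\Lambda_s\neq\Lambda_\ell$ one exhibits a strictly larger closed subroot system obtained by adjoining $\pm\epsilon_i\oplus(y+\Lambda_\ell)$ for a fixed $y\in\Lambda_s\setminus\Lambda_\ell$, contradicting maximality; in the untwisted case any such enlargement instead forces $\Psi=\Phi$ by Lemma~\ref{hum}(1).

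For Cases~(2) and~(3), set $\Psi':=\Psi\cap\Phi^{B_n}$. The central step is to verify that $\Psi'$ is a \emph{maximal} closed subroot system of the reduced affine reflection system $\Phi^{B_n}$: any strictly larger closed subroot system $\Psi''$ of $\Phi^{B_n}$ extending $\Psi'$ could be combined with the appropriately enlarged divisible layer (defined via the decomposition of $2\epsilon_i$) to produce a closed subroot system strictly between $\Psi$ and $\Phi$, contradicting maximality of $\Psi$. Once established, $\mathrm{Gr}(\Psi')$ is either semi--closed or equal to $B_n$, and Theorem~\ref{Bncase} respectively Theorem~\ref{Bnformcasefull} gives the explicit form of $\Psi'$. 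In Case~(2) the fact that $\mathrm{Gr}(\Psi')$ omits the mixed long roots $\pm\epsilon_p\pm\epsilon_q$ with $p\notin J,\,q\in J$ means that no closure condition coming from divisible roots constrains the $\pm2\epsilon_i$--layer (the only potentially dangerous sums land outside $\text{Gr}(\Psi')$), so maximality of $\Psi$ forces $Z_{\pm2\epsilon_i}(\Psi)=\Lambda_d$. In Case~(3) the identity $2\epsilon_i=(\epsilon_i-\epsilon_{i+1})+(\epsilon_i+\epsilon_{i+1})$ combined with closedness of $\Psi$ yields both inclusions $Z_{\pm2\epsilon_i}(\Psi)=(Z_{\pm\epsilon_i-\epsilon_{i+1}}(\Psi')+Z_{\pm\epsilon_i+\epsilon_{i+1}}(\Psi'))\cap\Lambda_d$: the $\supseteq$ direction follows by summing elements of $\Psi'$ that lie in the required layers, and the $\subseteq$ direction follows because any $y\in Z_{\pm 2\epsilon_i}(\Psi)$ must, after subtracting any $z\in Z_{\pm\epsilon_i-\epsilon_{i+1}}(\Psi')$ with $y-z\in\Lambda_\ell$, land in $Z_{\pm\epsilon_i+\epsilon_{i+1}}(\Psi')$ by closedness.

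The main obstacle will be the fine verification in Case~(3): one has to argue that enlarging the divisible layer beyond the prescribed formula would either break closedness, or, via the reverse decomposition, force a proper extension of $\Psi'$ itself and hence contradict its maximality in $\Phi^{B_n}$. The converse direction in each of the three cases then reduces to a routine closedness check (using Lemma~\ref{inclusionen}(iv) together with Lemma~\ref{hum}(1)) followed by an argument that any strictly larger closed subroot system would either contradict the maximality of the parameters $(J,\Psi',\tau,H,Z)$ supplied by Theorems~\ref{Bncase} and~\ref{Bnformcasefull}, or would exhaust $\Phi$.
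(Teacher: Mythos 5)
Your Case (1) reduction and your identification of the divisible layer in Case (3) (subtracting a long root of $\Psi\cap\Phi^{B_n}$ from $2\epsilon_i\oplus y$ and using $\Lambda_d\subseteq\Lambda_\ell$) are sound and essentially what the paper does. The genuine gap is in your self-declared central step: proving \emph{first} that $\Psi\cap\Phi^{B_n}$ is maximal closed in $\Phi^{B_n}$ by taking an arbitrary closed $\Psi''$ with $\Psi\cap\Phi^{B_n}\subsetneq\Psi''\subsetneq\Phi^{B_n}$ and adjoining an ``appropriately enlarged divisible layer''. For $\Delta=\Psi''\cup\{\pm2\epsilon_i\oplus D_i\}$ to be closed you need not only $D_i\supseteq\bigl(Z_{\epsilon_i}(\Psi'')+Z_{\epsilon_i}(\Psi'')\bigr)\cap\Lambda_d$ and $D_i\supseteq\bigl(Z_{\epsilon_i-\epsilon_j}(\Psi'')+Z_{\epsilon_i+\epsilon_j}(\Psi'')\bigr)\cap\Lambda_d$, but also the reverse compatibilities $D_i+Z_{-\epsilon_i}(\Psi'')\subseteq Z_{\epsilon_i}(\Psi'')$ and $D_i+Z_{-(\epsilon_i\pm\epsilon_j)}(\Psi'')\subseteq Z_{\epsilon_i\mp\epsilon_j}(\Psi'')$, coming from sums such as $(2\epsilon_i\oplus d)+(-\epsilon_i\oplus a)=\epsilon_i\oplus(d+a)\in\Phi^{\mathrm{re}}$, which always lie in $\Phi^{B_n}$. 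For an arbitrary closed $\Psi''$, whose layers need not be unions of cosets modulo $\Lambda_\ell$ or modulo a subgroup absorbing these sums, there is no reason a set $D_i\supseteq Z_{2\epsilon_i}(\Psi)$ satisfying both families of constraints exists, so no closed subroot system strictly between $\Psi$ and $\Phi$ is produced and no contradiction is obtained. The same oversight appears in your Case (2) remark that ``no closure condition coming from divisible roots constrains the $\pm2\epsilon_i$--layer'': the mixed sums above do constrain it; they are harmless only because the layers $Z_1,Z_2$ of the \emph{classified} maximal $\Psi'$ are unions of $\Lambda_\ell$--cosets and $\Lambda_d\subseteq\Lambda_\ell$.

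The paper avoids this by reversing the logical order: it shows $\Psi\cap\Phi^{B_n}$ is a \emph{proper} closed subroot system of $\Phi^{B_n}$ (if it were all of $\Phi^{B_n}$, then $\Psi=\Phi^{\mathrm{re}}$, the analogue of your Lemma~\ref{keylemmaBCn} step), chooses \emph{some} maximal closed $\Psi'$ of $\Phi^{B_n}$ containing it, and uses the explicit forms \eqref{Bnform} and \eqref{prfcl00} to verify that the concrete candidates $\Psi'\cup\{\pm2\epsilon_i\oplus\Lambda_d\}$, resp.\ $\Psi'\cup\{\alpha\oplus(\tau(\alpha)+H)\cap\Lambda_d:\alpha\in\mathring{\Phi}_d\}$, are closed, proper and contain $\Psi$; maximality of $\Psi$ then gives equality, and maximality of $\Psi\cap\Phi^{B_n}=\Psi'$ falls out as a consequence rather than serving as an input. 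If you want to keep your order, you must prove the closedness of your combined set using exactly this classified structure, which collapses your argument back to the paper's. One further small point: in the semi--closed case you need $\mathring{\Phi}_s\subseteq\mathrm{Gr}(\Psi)$ before intersecting with $\Phi^{B_n}$; Lemma~\ref{shortarethere} only covers reduced $\mathring{\Phi}$, so this should be obtained as in Corollary~\ref{23den}, from $\mathrm{Gr}(\Psi)\subseteq A_{J_\Psi}$ (subtract $\epsilon_k\oplus y$ from $(\epsilon_k\pm\epsilon_\ell)\oplus y'$), which also yields your Case (1) directly whenever $J_\Psi$ is proper.
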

The proof of the above theorem will be given in the rest of this section.
\subsection{} First, we will analyze when $\wh{A_J}$ is a maximal closed subroot system.
\begin{prop}\label{bcnkeyprop01}
The lift $\wh{A_J}$ of $A_J$ in $\Phi$ is a maximal closed subroot system for all $\emptyset \neq J\subsetneq \{1,\dots, n\}$.
\end{prop}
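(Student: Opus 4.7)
The plan is to first verify that $\wh{A_J}$ is a closed subroot system and then prove maximality by showing that any strict enlargement must equal $\Phi^{\mathrm{re}}$. Closedness of $\wh{A_J}$ is routine: since $A_J$ is a closed subroot system of $BC_n$, we only need to observe that for any $\alpha,\beta\in A_J$ with $\alpha+\beta\in\mathring{\Phi}$, the relevant inclusion among $\Lambda_s,\Lambda_\ell,\Lambda_d$ from Lemma~\ref{inclusionen}(iv) guarantees $\Lambda_\alpha+\Lambda_\beta\subseteq\Lambda_{\alpha+\beta}$.

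For maximality, suppose $\wh{A_J}\subsetneq\Psi$ with $\Psi$ a closed subroot system of $\Phi^{\mathrm{re}}$. Any $\alpha\oplus y\in\Psi\setminus\wh{A_J}$ must have $\alpha\in\mathring{\Phi}\setminus A_J$, so either $\alpha=\pm\epsilon_k$ with $k\notin J$, or $\alpha=\pm\epsilon_r\pm\epsilon_s$ with $r\in J$, $s\notin J$. The mixed long-root case reduces to the short-root case: if $(\epsilon_r-\epsilon_s)\oplus y\in\Psi$ with $r\in J$, $s\notin J$, then adding $-\epsilon_r\oplus y'\in\wh{A_J}$ (any $y'\in\Lambda_s$) yields $-\epsilon_s\oplus(y+y')\in\Psi$ by $\Lambda_\ell+\Lambda_s\subseteq\Lambda_s$, so $\epsilon_s\in\mathrm{Gr}(\Psi)$ with $s\notin J$.

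So assume $\epsilon_k\oplus y_k\in\Psi$ for some $k\notin J$, $y_k\in\Lambda_s$. Using $J\neq\emptyset$, pick $j\in J$; adding $\mp\epsilon_j\oplus(\mp y_k)\in\wh{A_J}$ gives $(\epsilon_k\mp\epsilon_j)\oplus 0\in\Psi$ (noting $0\in\Lambda_\ell$). Combining these with $\mp\epsilon_j\oplus\Lambda_s\subseteq\wh{A_J}$ yields $\pm\epsilon_k\oplus\Lambda_s\subseteq\Psi$. Propagation to every $k'\notin J$: for $k'\neq k$ both in $J^c$ we have $\epsilon_{k'}-\epsilon_k\in A_J$, so $(\epsilon_{k'}-\epsilon_k)\oplus\Lambda_\ell\subseteq\wh{A_J}$; adding $\epsilon_k\oplus\Lambda_s$ and using $\Lambda_\ell+\Lambda_s\subseteq\Lambda_s$ gives $\epsilon_{k'}\oplus\Lambda_s\subseteq\Psi$ for all $k'\notin J$.

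To finish, the only roots of $\Phi^{\mathrm{re}}$ not obviously in $\Psi$ are the mixed long non-divisible roots $(\pm\epsilon_r\pm\epsilon_s)\oplus\Lambda_\ell$ with $r\in J$, $s\notin J$. Since both $\epsilon_r\oplus\Lambda_s$ and $\epsilon_s\oplus\Lambda_s$ lie in $\Psi$, any $y\in\Lambda_\ell$ can be written as $y+0$ with $y\in\Lambda_s$ (using $\Lambda_\ell\subseteq\Lambda_s$, which follows from $\Lambda_s+\Lambda_\ell\subseteq\Lambda_s$ and $0\in\Lambda_s$), so $(\epsilon_r+\epsilon_s)\oplus y\in\Psi$; the other sign combinations are analogous. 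Combined with $\pm 2\epsilon_i\oplus\Lambda_d\subseteq\wh{A_J}$ for all $i$, this gives $\Psi=\Phi^{\mathrm{re}}$. The main technical issue is step three, bookkeeping the exact $\Lambda_x$-layers reached under repeated closure, where Lemma~\ref{inclusionen}(iv) and the identity $\Lambda_\ell\subseteq\Lambda_s$ are the essential tools.
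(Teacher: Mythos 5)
Your maximality argument is correct and is essentially the paper's proof: you identify the same two types of roots that can occur in $\Psi\setminus\wh{A_J}$, reduce the mixed long-root case to the short-root case by adding a short root from $\wh{A_J}$, and then use a fixed $j\in J$ to get $\pm\epsilon_k\oplus\Lambda_s\subseteq\Psi$ and propagate this to all indices, forcing $\Psi=\Phi^{\mathrm{re}}$ (the paper handles the two cases in the opposite order, but the steps coincide). One inaccuracy: your justification of closedness invokes an inclusion $\Lambda_\alpha+\Lambda_\beta\subseteq\Lambda_{\alpha+\beta}$, which Lemma~\ref{inclusionen}(iv) does not provide and which is false in general in the twisted case (e.g.\ $\Lambda_s+\Lambda_s\not\subseteq\Lambda_\ell$); closedness of $\wh{A_J}$ is instead automatic because the lift contains the full fibre $\gamma\oplus\Lambda_\gamma$ for every $\gamma\in A_J$, so if $\alpha+\beta\in A_J$ and $(\alpha+\beta)\oplus(y+z)\in\Phi^{\mathrm{re}}$ then $y+z\in\Lambda_{\alpha+\beta}$ by definition of $\Phi^{\mathrm{re}}$. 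This harmless slip aside, the proof matches the paper's, which records closedness before the proposition and devotes the proof only to maximality.
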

\begin{proof}
Let $\Delta$ be a closed subroot system of $\Phi$ such that 
$\wh{A_J}\subsetneq\Delta\subseteq\Phi$. We have the following possibilities for the elements in $\Delta\backslash\wh{A_J}$:
$(i)\  \pm\epsilon_i\oplus y\in \Delta$ for some $i\notin J, y\in \Lambda_s,\
(ii)\ \pm\epsilon_i\pm \epsilon_j\oplus y\in \Delta$ for some $i\notin J, j\in J, y\in \Lambda_\ell.$

\textit{Case 1:} Suppose $\pm\epsilon_i\oplus y\in \Delta$ for some $i\notin J, y\in \Lambda_s.$
Then we have $(\pm\epsilon_i\oplus y)+(\pm\epsilon_j\oplus -y)=\pm\epsilon_i\pm\epsilon_j\oplus 0\in\Delta$ for all $j\in J$.
Now,  $$\pm\epsilon_i\oplus \Lambda_s=(\mp \epsilon_j\oplus \Lambda_s)+(\pm\epsilon_i\pm\epsilon_j\oplus 0)\subseteq \Delta.$$
For $s\notin J$ with $i\neq s$, we have
$$\text{$(\pm\epsilon_i\oplus \Lambda_s)+(\mp\epsilon_i\pm\epsilon_s\oplus 0)=\pm\epsilon_s\oplus \Lambda_s \subseteq \Delta$.}$$
This proves that $\pm \epsilon_s\oplus\Lambda_s \subseteq\Delta$
for all $1\le s\le n$. This implies that $\Delta=\Phi$, since $\Lambda_\ell \subseteq \Lambda_s$.

\textit{Case 2:}
Suppose $\pm\epsilon_i\pm \epsilon_j\oplus y\in \Delta$ for some $i\notin J, j\in J, y\in \Lambda_\ell.$ Then since 
$\mp\epsilon_j\oplus \Lambda_s\subseteq \Delta$ and $\Lambda_\ell\subseteq \Lambda_s$, we have 
$$\pm\epsilon_i\oplus -y=(\pm\epsilon_i\pm \epsilon_j\oplus y)+(\mp\epsilon_j\oplus -2y)\in \Delta.$$
So we are back to Case 1.
\end{proof} 

When $J=\emptyset$ we have the following.
\begin{lem}\label{bcnkeyprop02}
Let $J=\emptyset$. If $\Lambda_s=\Lambda_\ell$, then the closed subroot system $\wh{A_J}$ of $\Phi$ is maximal and otherwise it
is contained in a maximal closed subroot system of the form
$$\Psi(Z)=\{\alpha \oplus Z :\a\in \mathring{\Phi}_s\}  
\cup \{2\alpha \oplus \Lambda_d, \beta\oplus \Lambda_\ell :\a\in \mathring{\Phi}_s, \beta\in \mathring{\Phi}_\ell \},$$
where $Z$ is a union of cosets modulo $\Lambda_\ell$ including $\Lambda_\ell$ and $Z$ is maximal in $\Lambda_s/\Lambda_\ell$.
\end{lem}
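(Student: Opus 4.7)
The key observation is that $\wh{A_\emptyset}$ already exhausts the long real roots at height $\Lambda_\ell$ and the divisible real roots at height $\Lambda_d$, so any closed subroot system $\Delta$ properly containing $\wh{A_\emptyset}$ must contain some new short real root $\alpha\oplus z$ with $z\in\Lambda_s$. Both cases of the lemma are driven by propagating this single short root through the closedness relations.

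For the case $\Lambda_s=\Lambda_\ell$, I would pick $\alpha\oplus z\in\Delta\setminus\wh{A_\emptyset}$ (so $z\in\Lambda_\ell$) and, for every long root $\beta$ with $\alpha+\beta\in\mathring{\Phi}_s$, sum with $\beta\oplus y\in\wh{A_\emptyset}$ over $y\in\Lambda_\ell$ to obtain $(\alpha+\beta)\oplus(z+\Lambda_\ell)=(\alpha+\beta)\oplus\Lambda_s\subseteq\Delta$. Letting $\beta$ range (and using the reflection $\alpha\oplus z\mapsto -\alpha\oplus -z$) produces every short root at full $\Lambda_s$-height in $\Delta$, so $\Delta=\Phi$ and $\wh{A_\emptyset}$ is maximal.

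For the case $\Lambda_\ell\subsetneq\Lambda_s$, the mild assumption guarantees that $\Lambda_\ell$ is a subgroup, so $\Lambda_s/\Lambda_\ell$ is a nonempty set with the involution induced by negation. I would take $Z$ to be a maximal proper subset of $\Lambda_s$ that is a union of $\Lambda_\ell$-cosets, contains $\Lambda_\ell$, and satisfies $Z=-Z$; the last condition is forced by the subroot-system axiom through the reflection $s_{\alpha\oplus z}$. Such a $Z$ is obtained concretely as the complement of $\{y+\Lambda_\ell,-y+\Lambda_\ell\}$ for any fixed $y\in\Lambda_s\setminus\Lambda_\ell$ (a single coset when $2y\in\Lambda_\ell$); this complement is a minimal nonempty $-$-stable coset-union avoiding $\Lambda_\ell$, and so the resulting $Z$ is automatically maximal. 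Closedness of $\Psi(Z)$ then reduces to a case-by-case check via Lemma~\ref{hum}(1): short+long sums close because $Z+\Lambda_\ell=Z$; short+divisible sums close because $\Lambda_d\subseteq\Lambda_\ell$ (a consequence of Lemma~\ref{inclusionen}(iv) and $0\in\Lambda_\ell$); the remaining long/divisible sums stay within the $\Lambda_\ell$- and $\Lambda_d$-layers; and any short+short sum that is a real long (resp.\ divisible) root automatically lands in the full $\Lambda_\ell$-layer (resp.\ $\Lambda_d$-layer) of $\Psi(Z)$. The containment $\wh{A_\emptyset}\subseteq\Psi(Z)$ is immediate.

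For the maximality of $\Psi(Z)$ I would argue as in the first case: given a new short root $\alpha\oplus z_0\in\Delta$ with $z_0\in\Lambda_s\setminus Z$, the same propagation yields $\alpha'\oplus(z_0+\Lambda_\ell)\subseteq\Delta$ for every short $\alpha'$, and the reflection axiom adds $\alpha'\oplus(-z_0+\Lambda_\ell)\subseteq\Delta$. The set $Z\cup(z_0+\Lambda_\ell)\cup(-z_0+\Lambda_\ell)$ is then a $-$-stable union of $\Lambda_\ell$-cosets strictly larger than $Z$, so maximality of $Z$ forces it to equal $\Lambda_s$, whence $\Delta=\Phi$. The step I expect to be the main obstacle is ensuring that the enlargement of $Z$ produced by adjoining a single new short root preserves the $-$-symmetry, so that maximality of $Z$ can actually be invoked; once this $-$-symmetry is tracked, the closedness verification for $\Psi(Z)$, though it involves the many $BC_n$ sum types, reduces to the two key containments $Z+\Lambda_\ell=Z$ and $\Lambda_d\subseteq\Lambda_\ell$.
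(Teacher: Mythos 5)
Your proof is correct and follows essentially the same route as the paper's: the same propagation through the long roots (which sit at full height $\Lambda_\ell$) in the untwisted case, and in the twisted case the same two containments $Z+\Lambda_\ell=Z$ and $\Lambda_d\subseteq\Lambda_\ell$ to get closedness of $\Psi(Z)$, with maximality of $\Psi(Z)$ reduced to maximality of $Z$ (a step the paper only asserts and you spell out via the same propagation argument). The one obstacle you single out, preserving the $-$-symmetry of $Z$, is in fact vacuous: by Lemma~\ref{inclusionen} one has $2\Lambda_s\subseteq\Lambda_\ell$, hence $-x+\Lambda_\ell=x+\Lambda_\ell$ for every $x\in\Lambda_s$, so every union of $\Lambda_\ell$-cosets is automatically $-$-stable and your maximal $-$-stable $Z$ is exactly a maximal coset-union (the complement of a single coset), as the statement requires.
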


\begin{proof}
First assume that $\Lambda_s=\Lambda_\ell$ and $J=\emptyset$. 
Let $\Delta$ be a closed subroot system of $\Phi$ such that 
$\wh{A_J}\subsetneq\Delta\subseteq\Phi$. 
Then we have $\epsilon_i\oplus y\in \Delta\backslash\wh{A_J}$ for some $y\in \Lambda_s$. Let $j\in \{1,\dots,n\}$ such that $j\neq i$, then
we have $$(\epsilon_i\oplus y)+((-\epsilon_i+\epsilon_j)\oplus \Lambda_s)=(\epsilon_j\oplus \Lambda_s)\subseteq \Delta.$$
By interchanging the roles of $j$ and $i$, we get $(\epsilon_i\oplus \Lambda_s)\subseteq \Delta.$ This proves $\Delta=\Phi$ and 
$\wh{A_J}$ is maximal in this case.

Now assume that $\Lambda_\ell$ is properly contained in $\Lambda_s.$
Suppose $Z$ is a union cosets modulo $\Lambda_\ell$, then $Z+\Lambda_\ell=Z$ and since $\Lambda_d\subseteq \Lambda_\ell$ we get that $\Psi(Z)$ is a closed subroot system of $\Phi.$
 Clearly, $\Psi(Z)$ is maximal if and only if $Z$ is maximal in $\Lambda_s/\Lambda_\ell$ and since $\wh{A_J}\subseteq \Psi(Z)$ we are done.
\end{proof}
Let $\Psi$ be a maximal closed subroot system of $\Phi$ and set $J_\Psi=\{i\in\{1,\dots,n\} : \epsilon_i\in \mathrm{Gr}(\Psi)\}.$
\begin{cor}\label{23den}
 We have
 \begin{enumerate}
 \item $J_\Psi$ must be non--empty if $\Lambda_s\neq \Lambda_\ell.$
 \item  $\Psi=\wh{A_{J_{\Psi}}}$ if $J_\Psi$ is a proper subset of $\{1,\dots,n\}.$
 \end{enumerate}

 \end{cor}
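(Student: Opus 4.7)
The plan is to treat the two parts separately, each by a short maximality argument, using the previous proposition and lemma as black boxes.

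For part (1), I would argue by contradiction: assume $J_\Psi = \emptyset$ but $\Lambda_s \neq \Lambda_\ell$. By definition, no short root lies in $\mathrm{Gr}(\Psi)$, so every element of $\Psi$ has gradient in $\mathring{\Phi}_\ell \cup \mathring{\Phi}_d$, and each $Z_\alpha(\Psi) \subseteq \Lambda_\alpha$ (which equals $\Lambda_\ell$ on long roots and $\Lambda_d$ on divisible roots). Unwinding definitions this says $\Psi \subseteq \wh{A_\emptyset}$. By Lemma \ref{bcnkeyprop02}, since $\Lambda_s \neq \Lambda_\ell$, the set $\wh{A_\emptyset}$ is properly contained in a closed subroot system $\Psi(Z) \subsetneq \Phi$ (taking $Z$ to be any union of cosets of $\Lambda_\ell$ properly between $\Lambda_\ell$ and $\Lambda_s$). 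Hence $\Psi \subsetneq \Psi(Z) \subsetneq \Phi$, which contradicts the maximality of $\Psi$.

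For part (2), the main step is to show the containment $\mathrm{Gr}(\Psi) \subseteq A_{J_\Psi}$. The short-root part is immediate from the definition of $J_\Psi$ (together with $\mathrm{Gr}(\Psi) = -\mathrm{Gr}(\Psi)$ from Proposition \ref{prop1a}); the divisible roots are allowed without restriction. The only content is ruling out long roots of mixed type: suppose for contradiction $\pm\epsilon_r \pm \epsilon_s \in \mathrm{Gr}(\Psi)$ with $r \in J_\Psi$ and $s \notin J_\Psi$. Pick $y \in \Lambda_\ell$ with $(\pm\epsilon_r \pm \epsilon_s)\oplus y \in \Psi$ and $y_r \in \Lambda_s$ with $\epsilon_r \oplus y_r \in \Psi$; then $-\epsilon_r \oplus (-y_r) \in \Psi$ as well, and the sum
\[
(\pm\epsilon_r \pm \epsilon_s)\oplus y \;+\; (\mp\epsilon_r)\oplus(\mp y_r) \;=\; (\pm\epsilon_s) \oplus (y \mp y_r)
\]
lies in $\Phi^{\mathrm{re}}$, so by closedness of $\Psi$ it lies in $\Psi$. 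This forces $\epsilon_s \in \mathrm{Gr}(\Psi)$, i.e.\ $s \in J_\Psi$, a contradiction.

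Once $\mathrm{Gr}(\Psi) \subseteq A_{J_\Psi}$ is established, the inclusions $Z_\alpha(\Psi) \subseteq \Lambda_\alpha$ give $\Psi \subseteq \wh{A_{J_\Psi}}$. Since $J_\Psi \subsetneq \{1,\dots,n\}$, the subroot system $\wh{A_{J_\Psi}}$ misses $\pm\epsilon_i$ for any $i \notin J_\Psi$, hence is a proper closed subroot system of $\Phi$. Maximality of $\Psi$ therefore forces $\Psi = \wh{A_{J_\Psi}}$.

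The only real obstacle is the mixed-index long-root case in part (2); everything else is a one-line unwinding of Lemma \ref{bcnkeyprop02} and the maximality hypothesis. Writing the signs carefully in the cancellation is the only place where one could slip up.
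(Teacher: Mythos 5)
Your proof is correct and takes essentially the same route as the paper: the heart of both arguments is ruling out mixed long roots $\epsilon_r\pm\epsilon_s$ with $r\in J_\Psi$, $s\notin J_\Psi$ in $\mathrm{Gr}(\Psi)$ by subtracting a lift of $\epsilon_r$ inside $\Psi$ (the resulting element lies in $\Phi^{\text{re}}$ since $\Lambda_s+\Lambda_\ell\subseteq\Lambda_s$), which gives $\Psi\subseteq\wh{A_{J_\Psi}}$, after which both statements follow from Lemma~\ref{bcnkeyprop02}, Proposition~\ref{bcnkeyprop01} and the maximality of $\Psi$ exactly as in the paper. The only cosmetic remark is that in part (1) you do not need a coset union $Z$ strictly between $\Lambda_\ell$ and $\Lambda_s$ (which need not exist); the $\Psi(Z)$ furnished by Lemma~\ref{bcnkeyprop02} (or simply $Z=\Lambda_\ell$) already yields the contradiction.
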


\begin{proof}
The result is immediate from Proposition~\ref{bcnkeyprop01} and Lemma~\ref{bcnkeyprop02} if we prove that $\mathrm{Gr}(\Psi) \subseteq A_{J_{\Psi}}$. Suppose $\mathrm{Gr}(\Psi) \nsubseteq A_{J_{\Psi}}$, then there exists $k \in J_{\Psi}$ and $\ell \notin J_{\Psi}$ such that $\epsilon_k\pm\epsilon_\ell \in \mathrm{Gr}(\Psi)$. 
This means that there exists $y\in \Lambda_s, y'\in \Lambda_\ell$ such that
$\epsilon_k\oplus y\in \Psi, \epsilon_k\pm\epsilon_\ell\oplus y'\in \Psi.$
Since $\Psi$ is closed in $\Phi$, we get $$(\epsilon_k\pm\epsilon_\ell\oplus y')-(\epsilon_k\oplus y)=(\pm \epsilon_\ell\oplus (y'-y)) \in \Psi,$$
which contradicts the fact that $\ell \notin J_{\Psi}$. 
So, $\mathrm{Gr}(\Psi) \subseteq A_{J_{\Psi}}$ and hence $\Psi \subseteq \wh{A_{J_{\Psi}}}$. Now the Statements $(1)$ and $(2)$ are clear since $\wh{A_J}$ is closed in $\Phi$.
\end{proof}

\subsection{}
Recall from Lemma~\ref{inclusionen} (or our mild assumption) that $\Lambda_\ell$ is a subgroup in this case and set 
$\Gamma_\ell=\{(\a,\beta)\in\mathring{\Phi}_\ell\times \mathring{\Phi}_\ell: \alpha+\beta\in\mathring{\Phi}_d\}$. It is clear that given any $\gamma\in \mathring{\Phi}_d$, there exists
$(\a, \beta)\in \Gamma_\ell$ such that $\a+\beta=\gamma$.

\begin{lem}\label{keylemmaBCn}
 Let $\Phi$ be an irreducible affine reflection system with $\mathring{\Phi}$ of type $BC_n, n\geq 2$ and let $\Psi\subseteq \Phi$ be a closed subroot system such that $\mathring{\Phi}_s\subseteq \mathrm{Gr}(\Psi)$. Then we have
 \begin{enumerate}
  \item $\Psi\cap \Phi^{B_n}$ is a closed subroot system of $\Phi^{B_n}$. 
  \item $\Psi\cap \Phi^{B_n}=\Phi^{B_n}$ if and only if $\Psi=\Phi^{\text{re}}.$  
  
 \end{enumerate} 
\end{lem}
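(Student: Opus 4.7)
My plan is to verify both parts directly by unpacking the closure axioms of $\Psi$ and using the fact that $B_n\subset BC_n$ at the level of gradients.

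For part~(1), I would first note that $\Psi\cap\Phi^{B_n}$ is non--empty: the hypothesis $\mathring{\Phi}_s\subseteq\mathrm{Gr}(\Psi)$ guarantees, for each short root $\epsilon_i$, some $y\in\Lambda_s$ with $\epsilon_i\oplus y\in\Psi$, and such an element lies in $\Phi^{B_n}$ by definition. To check that $\Psi\cap\Phi^{B_n}$ is stable under reflections, take $\alpha\oplus y,\beta\oplus z\in\Psi\cap\Phi^{B_n}$; then $s_{\alpha\oplus y}(\beta\oplus z)\in\Psi$ because $\Psi$ is a subroot system of $\Phi$, and its gradient $s_\alpha(\beta)$ lies in $\mathring{\Phi}_s\cup\mathring{\Phi}_\ell$ because the reduced $B_n$--subsystem of $BC_n$ is stable under its own Weyl group. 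The closedness condition in $\Phi^{B_n}$ is then immediate: if the sum of two elements of $\Psi\cap\Phi^{B_n}$ falls in $\Phi^{B_n}\subseteq\Phi^{\text{re}}$, the sum lies in $\Psi$ by closedness of $\Psi$ in $\Phi$, hence in $\Psi\cap\Phi^{B_n}$.

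For part~(2), the direction $\Psi=\Phi^{\text{re}}\Rightarrow\Psi\cap\Phi^{B_n}=\Phi^{B_n}$ is immediate since $\Phi^{B_n}\subseteq\Phi^{\text{re}}$. The substance lies in the converse: assuming $\Phi^{B_n}\subseteq\Psi$, I must show that every divisible root $\pm 2\epsilon_i\oplus y$ with $y\in\Lambda_d$ already lies in $\Psi$. The key observation is that $\Lambda_d\subseteq\Lambda_\ell$. This follows from Lemma~\ref{inclusionen}(iv), which gives $\Lambda_\ell+\Lambda_d\subseteq\Lambda_\ell$, combined with $0\in\Lambda_\ell$ (valid since long roots are non--divisible in $BC_n$).

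Given $y\in\Lambda_d$ and any $j\neq i$ (available because $n\geq 2$), I would write
\[
2\epsilon_i\oplus y=\bigl((\epsilon_i-\epsilon_j)\oplus 0\bigr)+\bigl((\epsilon_i+\epsilon_j)\oplus y\bigr),
\]
where both summands lie in $\Phi^{B_n}\subseteq\Psi$ (using $y\in\Lambda_\ell$ for the second), and the sum lies in $\Phi^{\text{re}}$ because $y\in\Lambda_d$. Closedness of $\Psi$ then forces $2\epsilon_i\oplus y\in\Psi$, and since $\Psi=-\Psi$ together with $\Lambda_d=-\Lambda_d$ (Proposition~\ref{prop1a}) we likewise obtain $-2\epsilon_i\oplus y\in\Psi$. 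Combined with $\Phi^{B_n}\subseteq\Psi$ this exhausts $\Phi^{\text{re}}$. The only real subtlety is recognising that divisible roots have to be manufactured as sums of \emph{long} roots via the identity $2\epsilon_i=(\epsilon_i-\epsilon_j)+(\epsilon_i+\epsilon_j)$, rather than from short roots; the crux that makes this work is the inclusion $\Lambda_d\subseteq\Lambda_\ell$, so that any $y\in\Lambda_d$ is automatically available as a long--root component.
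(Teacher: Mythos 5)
Your proof is correct and follows essentially the same route as the paper: part (1) is the same direct closedness check inside $\Phi^{B_n}\subseteq\Phi^{\text{re}}$, and part (2) manufactures each divisible root exactly as the paper does via the pair of long roots $(\epsilon_i-\epsilon_j,\epsilon_i+\epsilon_j)\in\Gamma_\ell$ together with the inclusion $\Lambda_d\subseteq\Lambda_\ell$. Your explicit justification of $\Lambda_d\subseteq\Lambda_\ell$ from Lemma~\ref{inclusionen}(iv) and $0\in\Lambda_\ell$ is a nice touch the paper leaves implicit, but it is not a different argument.
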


\begin{pf}
 If $\a, \beta\in \Psi\cap \Phi^{B_n}$ and $\a+\beta\in \Phi^{B_n}$, then we have $\a+\beta\in \Psi \cap \Phi^{B_n}$ since $\Psi$ is closed and $\Phi^{B_n}\subseteq \Phi^{\text{re}}.$
 Now assume that  $\Psi\cap \Phi^{B_n}=\Phi^{B_n}$. Then we have $Z_\a(\Psi)=\Lambda_\a$ for all $\a\in \mathring{\Phi}_s\cup \mathring{\Phi}_\ell.$
 Let $\gamma\in \mathring{\Phi}_d$ and $(\a, \beta)\in \Gamma_\ell$ such that $\a+\beta=\gamma$. This implies $Z_\gamma(\Psi)\supseteq\big(Z_\a(\Psi)+Z_\beta(\Psi) \big)\cap \Lambda_d=(\Lambda_\ell+\Lambda_\ell)\cap\Lambda_d=\Lambda_d$
since $\Lambda_d\subseteq \Lambda_\ell$ and $\Lambda_\ell$ is a group. This proves that $\gamma\oplus \Lambda_d\subseteq \Psi$ for all $\gamma \in \mathring{\Phi}_d.$ Hence we have 
$\Psi=\Phi^{\text{re}}$.\end{pf}
\subsection{}Now we are able to finish the proof of Theorem~\ref{mainBCn}. From Proposition~\ref{bcnkeyprop01} and Lemma~\ref{bcnkeyprop02} we get that $\wh{A_J}$ is a maximal closed subroot system (under the conditions of Theorem~\ref{mainBCn} (1)). The proof of the fact that the subsets in Theorem~\ref{mainBCn}(2) and Theorem~\ref{mainBCn}(3) repsectively are maximal closed subroot systems is similar to the proof given in Section~\ref{section4} and Section~\ref{section5} respectively and we omit the details. 

So in remains to show that any given maximal closed subroot system $\Psi$ has the desired form. If $J_{\Psi}$ is a proper subset we get with Corollary~\ref{23den} that $\Psi=\wh{A_{J_{\Psi}}}$ (note that we must have $J_{\Psi}\neq \emptyset$ if $\Lambda_s\neq \Lambda_\ell$ by Lemma~\ref{bcnkeyprop02}). Otherwise we have $\mathring{\Phi}_s\subseteq \mathrm{Gr}(\Psi)$ and we can assume with Lemma~\ref{keylemmaBCn} that $\Psi\cap \Phi^{B_n}$ is proper closed subroot system in
$\Phi^{B_n}.$ So we can chose a maximal closed subroot system $\Psi'$ of $\Phi^{B_n}$ such that $$\Psi\cap \Phi^{B_n}\subseteq \Psi'\subsetneq \Phi^{B_n}.$$ 

\textit{Case 1:} Assume that $\text{Gr}(\Psi')$ is semi--closed. We must have
with Theorems~\ref{Bncase} that there exists a proper non--empty subset $J\subseteq \{1,\dots,n\}$ and disjoint subsets $Z_1,Z_2\subseteq \Lambda_s$ such that $Z_1$ and $Z_2$ are unions of cosets modulo $\Lambda_\ell$, $\Lambda_s=Z_1\cup Z_2$ and
$$\Psi'=\{\pm\epsilon_i\oplus Z_1, \pm\epsilon_j\oplus Z_2: i\notin J, j\in J\}\cup\{(\pm\epsilon_p\pm\epsilon_q)\oplus \Lambda_{\ell}: p,q\notin J \text{ or } p,q\in J\}.$$
The closed subroot system spanned by $\Psi'$ in $\Phi$ is obviously contained in 
$$\Psi(Z_1, Z_2):=\Psi'\cup \{\pm2\epsilon_i\oplus \Lambda_d : 1\leq i\leq n\}.$$
Since $\Lambda_d\subseteq \Lambda_\ell$ and $\Lambda_\ell+Z_i\subseteq Z_i$ for $i=1, 2$ and $\{\pm2\epsilon_i\oplus \Lambda_d :1\leq i\leq n\}\subseteq \Psi(Z_1, Z_2)$, we immediately get that $\Psi(Z_1, Z_2)$ is a closed subroot system of $\Phi$ such that
 $$\Psi\subseteq \Psi(Z_1, Z_2)\subsetneq \Phi.$$ This implies $\Psi=\Psi(Z_1, Z_2)$ since $\Psi$ is maximal closed in $\Phi.$

\vspace{0,1cm}

\textit{Case 2:} Assume that $\text{Gr}(\Psi')$ is full. We must have by Theorem~\ref{Bnformcasefull} 
that 
$$\Psi'=\{\a\oplus \tau(\a)+Z: \a\in \mathring{\Phi}_s\}\cup\{\a\oplus \tau(\a)+H : \a\in \mathring{\Phi}_\ell\}$$ 
where $H=\Lambda_\ell$ or $H\subseteq \Lambda_\ell$ is a maximal subgroup and
$\tau:\mathring{\Phi}\to \Lambda_s$ is a $\mathbb{Z}-$linear function satisfying $\tau(\beta)\in \Lambda_\ell$ for all $\beta\in \mathring{\Phi}_\ell$ and
$Z$ is a union of cosets modulo $H$ maximal with the property that $H\subseteq Z$, $-Z=Z$ and
$$(x+H), (y+H)\subseteq Z \text{ and } (x+H)\neq (y+H)\Rightarrow (x+y) \notin \Lambda_\ell.$$
The closed subroot system spanned by $\Psi'$ in $\Phi$ is contained in 
$$\Psi(Z, H)=\Psi'\cup \{\a \oplus (\tau(\a)+H)\cap \Lambda_d : \a\in\mathring{\Phi}_d\}.$$
Since $\Lambda_d\subseteq \Lambda_\ell$ and $\Lambda_\ell+Z\subseteq Z$ and $\{\a \oplus (\tau(\a)+H)\cap \Lambda_d : \a\in\mathring{\Phi}_d\}\subseteq \Psi(Z,H)$, 
we have $\Psi(Z,H)$ is a closed subroot system of $\Phi$ such that
 $\Psi\subseteq \Psi(Z, H)$. This immediately implies that
$\Psi= \Psi(Z, H)$ since $\Psi$ is maximal closed in $\Phi.$

\vspace{0,1cm}

\textit{Case 3:} Assume that $\text{Gr}(\Psi')$ is proper and closed. But this case is impossible since $\mathring{\Phi}_s\subseteq \mathrm{Gr}(\Psi)\subseteq\mathrm{Gr}(\Psi')$ and hence $\mathring{\Phi}_\ell\subseteq \mathrm{Gr}(\Psi')$ which contradicts the fact that $\mathrm{Gr}(\Psi')$ is proper.

\section{Applications: The toroidal Lie algebra case}\label{section8}
We discuss the example of a root system of a toroidal Lie algebra.
\subsection{}Toroidal Lie algebras are $k$--variable generalizations of affine Kac--Moody Lie algebras. Recall their root system from Example~\ref{ex1} and note that the corresponding untwisted affine reflection system is given by
\begin{equation}\label{158}\Phi=\{\alpha\oplus \mathbb{Z}^k : \alpha\in \mathring{\Phi}\cup \{0\}\}.\end{equation}
Assume for simplicity that $\mathring{\Phi}$ is reduced. Let $\Psi\subseteq \Phi$ a maximal closed subroot system. If $\text{Gr}(\Psi)\subsetneq\mathring{\Phi}$ we obtain with Theorem~\ref{propgrm} that 
\begin{equation*}\Psi=\{\alpha \oplus \bz^k: \alpha\in\mathring{\Phi}'\},\end{equation*}
where $\mathring{\Phi}'$ is a maximal closed subroot system in $\mathring{\Phi}$. When the gradient is full we get from Theorem~\ref{part2g2f4} and Theorem~\ref{Bnformcasefull} there exists a maximal subgroup $H\subseteq \mathbb{Z}^k$ and $\bz$--linear function $\tau:\mathring{\Phi}\rightarrow \bz^k/H$ such that
\begin{equation*}\Psi=\{\alpha \oplus \tau(\a): \alpha\in\mathring{\Phi}\}.\end{equation*}
We will describe the maximal closed subroot systems more explicitly.
\subsection{}Let $H\subseteq \bz^k$ a maximal subgroup, then there exists a prime number $q\in\bz_+$ such that $\bz^k/H\cong \bz/q\bz$. So we can fix a $\bz$--basis $v_1,\dots,v_k$ of $\bz^k$ and an integer $1\leq i\leq k$ such that $v_1,\dots,v_{i-1},qv_i,v_{i+1},\dots,v_k$ is a $\bz$--basis of $H$, i.e.
$$H=\bz v_1\oplus \cdots \oplus \bz v_{i-1}\oplus \bz qv_i\oplus \bz v_{i+1}\oplus \dots\oplus \bz v_k.$$ If $U$ is a unimodular $(k\times k)$ square matrix, i.e. $U$ is a matrix with integer entries and determinant $\pm 1$, then the rows of $U\cdot R$ where $R^{\top}=(v_1,\dots,qv_i,\dots,v_k)$ is again a $\bz$--basis of $H$. Given a square matrix $A$, there is always a unimodular matrix such that $U\cdot A$ is of Hermite normal form (see \cite[Chapter 4]{Co93} for more details), which means the following: 
\begin{itemize}
\item $U\cdot A$ is upper triangular 
\item The first nonzero entry from the left of a nonzero row is always strictly to the right of the leading coefficient of the row above it and it is positive.
\item All zero rows are located at the bottom and the elements below pivots are zero and elements above pivots are nonnegative and strictly smaller than the pivot.
\end{itemize}
So if $R$ is as above and $U$ is unimodular such that $U\cdot R$ is of Hermite normal form, we have $\text{det}(U)\text{det}(R)=q$. 
\begin{prop}Let $q$ be a prime number and denote by $\mathbb{H}_q$ the set of $(k\times k)$ Hermite normal form matrices with determinant $q$. Further let $\mathcal{M}_q$ the set of maximal subgroups of $\bz^k$ of finite index $q$. Then there is a bijection $\mathcal{M}_q\cong \mathbb{H}_q$.
\proof
If $H\in \mathcal{M}_q$, the discussion above shows that we can always choose a $\bz$--basis of $H$ in a way that the transpose of the matrix build by the basis vectors is an element of  $\mathbb{H}_q$. So we get a map
$$\Psi: \mathcal{M}_q\rightarrow \mathbb{H}_q$$
which is clearly injective. We will show that $\Psi$ defines a bijection. Clearly $\mathbb{H}_q$ has cardinality $(q^k-1)/(q-1)$. Every element $H\in \mathcal{M}_q$ defines a surjective map 
$$\bz^k\rightarrow \bz^k/H\cong \bz/q\bz.$$ So the cardinality of $\mathcal{M}_q$ is obviously equal to the cardinality of the set 
$$\{f:\bz^k\rightarrow \bz/q\bz : f \text{ surjective homomorphism}\}/\sim$$
where we say that $f\sim g$ if and only if $\text{ker}(f)=\text{ker}(g)$. Clearly $f\sim g$ if and only if
$$\bar{f}\circ \bar{g}^{-1}\in \text{Aut}(\bz/q\bz)\cong (\bz/q\bz)^{*},$$
where $\bar{f}$ is the induced isomorphism $\bz^k/\text{ker}(f)\rightarrow \bz/q\bz$. Since we have that $\{f:\bz^k\rightarrow \bz/q\bz : f \text{ surjective homomorphism}\}$ has cardinality $(q^k-1)$ we have the desired property.
\endproof
\end{prop}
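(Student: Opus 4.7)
The plan is to exhibit $\Psi$ explicitly, verify it is well-defined and injective using uniqueness of the Hermite normal form, and then conclude by showing both sides have the same cardinality $(q^k-1)/(q-1)$.

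First I would make the map $\Psi$ precise and well-defined. Given $H \in \mathcal{M}_q$, choose any $\bz$-basis $v_1, \dots, v_k$ of $H$ and set $R^{\top} = (v_1, \dots, v_k)$, so $R$ is a nonsingular integer matrix. By the standard existence and uniqueness theorem for Hermite normal form (see \cite[Chapter 4]{Co93}), there is a unique unimodular matrix $U$ such that $U \cdot R$ lies in $\mathbb{H}_q$. Define $\Psi(H) = U \cdot R$. If $v_1', \dots, v_k'$ is another $\bz$-basis of $H$ with matrix $R'$, then $R' = V \cdot R$ for some unimodular $V$, so $U' \cdot R' = (U' V U^{-1}) \cdot (U \cdot R)$, and uniqueness of the Hermite normal form forces $U' \cdot R' = U \cdot R$. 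Hence $\Psi$ is well-defined, and since $\det(U) = \pm 1$ and $|\det(R)| = [\bz^k:H] = q$, we indeed land in $\mathbb{H}_q$.

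Injectivity of $\Psi$ is immediate: if $\Psi(H) = \Psi(H')$, then the rows of this common matrix form a $\bz$-basis of both $H$ and $H'$, so $H = H'$.

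Next I would count both sides. For $\mathbb{H}_q$, since $q$ is prime and the diagonal entries of a Hermite normal form matrix in $\mathbb{H}_q$ are positive integers with product $q$, exactly one diagonal entry equals $q$ and the others equal $1$. If the $(i,i)$-entry is $q$, then the entries strictly above it in column $i$ (there are $i-1$ of them) range freely over $\{0, 1, \dots, q-1\}$, while all other off-diagonal entries are forced to be $0$. This gives
\[
|\mathbb{H}_q| \;=\; \sum_{i=1}^{k} q^{\,i-1} \;=\; \frac{q^k - 1}{q - 1}.
\]
For $\mathcal{M}_q$, I follow the hint in the statement: each $H \in \mathcal{M}_q$ is the kernel of some surjective homomorphism $f: \bz^k \to \bz/q\bz$, and two such homomorphisms $f, g$ have the same kernel iff $\bar f \circ \bar g^{-1} \in \mathrm{Aut}(\bz/q\bz) \cong (\bz/q\bz)^{\ast}$, which has order $q-1$. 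Surjective homomorphisms $\bz^k \to \bz/q\bz$ are in bijection with $(\bz/q\bz)^k \setminus \{0\}$ via $f \mapsto (f(e_1), \dots, f(e_k))$, giving $q^k - 1$ of them. Hence $|\mathcal{M}_q| = (q^k-1)/(q-1)$ as well, and the injection $\Psi$ must be a bijection.

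The main obstacle is really just the well-definedness step, which rests on the uniqueness part of the Hermite normal form theorem; once that is in hand, the rest is elementary counting. The counting of $\mathcal{M}_q$ via surjections modulo $(\bz/q\bz)^\ast$ also needs the small observation that any index-$q$ subgroup of $\bz^k$ is automatically maximal (because $\bz/q\bz$ is simple for $q$ prime), so that $\mathcal{M}_q$ really coincides with the set of index-$q$ subgroups considered on the homomorphism side.
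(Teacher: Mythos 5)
Your proof is correct and follows essentially the same route as the paper: define $\Psi$ via the Hermite normal form basis of $H$, observe injectivity, and then count both sides as $(q^k-1)/(q-1)$, with $\mathcal{M}_q$ counted via surjective homomorphisms $\mathbb{Z}^k\to\mathbb{Z}/q\mathbb{Z}$ modulo $\mathrm{Aut}(\mathbb{Z}/q\mathbb{Z})$. You merely fill in details the paper leaves implicit (well-definedness of $\Psi$ from uniqueness of the HNF, the explicit enumeration of $\mathbb{H}_q$, and the remark that index-$q$ subgroups are automatically maximal), which is fine.
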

Recall that we have assumed for simplicity that $\mathring{\Phi}$ is reduced. Fix a fundamental system $\{\a_1,\dots,\a_n\}\subseteq\mathring{\Phi}$. Given $b_{\alpha_i}\in \bz$ for all $1\leq i\leq n$ and $\alpha=\sum_ia_i \a_i$ we set $b_\a=\sum a_i b_{\a_i}$.
\begin{cor}The maximal closed subroot systems with full gradient of the affine reflection system \eqref{158} are in one-to-one correspondence with 
$$\{(q,(b_{\a_i})_{},U): q \text{ is a prime number,\  $(b_{\a_i})_{}\in [0,q-1]^{n}$ and $U\in \mathbb{H}_q$}\}.$$
In particular, the maximal closed subroot system associated to a triple $(q,(b_{\a_i}),U)$ is given by
$$\{\alpha \oplus b_{\a} e_\ell+\bz u_1+\cdots +\bz u_k: \alpha\in\mathring{\Phi}\},$$
where $u_1,\dots, u_k$ are the rows of $U$ and $\ell$ is the unique integer such that $u_\ell=q e_\ell$ ($e_\ell$ is the $\ell$--th unit vector).
\hfill\qed
\end{cor}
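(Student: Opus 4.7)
The plan is to assemble the corollary from the earlier machinery: first unify the two relevant classification theorems into a single description of $\Psi$, then use the Proposition and the shape of Hermite normal form matrices with prime determinant to extract the triple $(q,(b_{\alpha_i}),U)$.

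First, I would observe that the irreducible affine reflection system $\Phi$ in \eqref{158} is untwisted with $\Lambda_\alpha=\mathbb{Z}^k$ for every $\alpha\in\mathring{\Phi}\cup\{0\}$. For $\mathring{\Phi}$ simply laced or of type $C_n$, $F_4$, $G_2$, Theorem~\ref{part2g2f4} yields directly
$$\Psi=\{\alpha\oplus(\tau(\alpha)+H):\alpha\in\mathring{\Phi}\}$$
for a maximal subgroup $H\subsetneq\mathbb{Z}^k$ and a $\mathbb{Z}$--linear $\tau\colon\mathring{\Phi}\to\mathbb{Z}^k/H$ (the condition $\tau(\beta)\cap\Lambda_\ell\neq\emptyset$ is vacuous). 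In type $B_n$, Theorem~\ref{Bnformcasefull} initially allows a subset $Z$ between $H$ and $\Lambda_s$; but in the toroidal setting $\Lambda_\ell=\mathbb{Z}^k$, so the implication $(x+H)\neq(y+H)\Rightarrow (x+y)\notin\Lambda_\ell$ is violated unless $Z$ is a single coset. Hence $Z=H$ and $\Psi$ collapses to the same uniform form. So in both cases the data reduce to a pair $(H,\tau)$.

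Next I would feed $H$ into the Proposition to obtain a unique $U\in\mathbb{H}_q$ with $q=[\mathbb{Z}^k:H]$ prime. The key structural step is to read off the shape of $U$ from the HNF axioms: upper triangularity plus $\det U=q$ (prime) forces exactly one diagonal pivot to equal $q$, at some unique position $\ell$, with all other pivots equal to $1$. The rule that entries above a pivot of size $p$ lie in $[0,p-1]$ then forces all off-diagonal entries outside column $\ell$ to vanish, while entries in column $\ell$ above row $\ell$ lie in $[0,q-1]$. Writing the rows $u_1,\dots,u_k$ of $U$ as a $\mathbb{Z}$--basis of $H$, one checks that the only multiples of $e_\ell$ lying in $H$ are multiples of $qe_\ell$, so $\mathbb{Z}^k/H$ is cyclic of order $q$ generated by $e_\ell+H$, and every coset has a unique representative $be_\ell$ with $b\in[0,q-1]$.

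With the quotient so pinned down, I would encode $\tau$ via its values on the fundamental system: each $\tau(\alpha_i)=b_{\alpha_i}e_\ell+H$ for a unique $b_{\alpha_i}\in[0,q-1]$, and $\mathbb{Z}$--linearity propagates to $\tau(\alpha)=b_\alpha e_\ell+H$ for $\alpha=\sum a_i\alpha_i$, where $b_\alpha=\sum a_ib_{\alpha_i}$. Since on simple roots $\tau$ can be prescribed freely in $\mathbb{Z}^k/H$, the tuple $(b_{\alpha_i})\in[0,q-1]^n$ is an arbitrary and unconstrained invariant of $\Psi$. Conversely, starting from a triple $(q,(b_{\alpha_i}),U)$, define $H$ via the Proposition and $\tau$ by the formula above; the resulting
$$\Psi=\{\alpha\oplus b_\alpha e_\ell+\mathbb{Z} u_1+\cdots+\mathbb{Z} u_k:\alpha\in\mathring{\Phi}\}$$
is a maximal closed subroot system with full gradient by Theorems~\ref{part2g2f4} and~\ref{Bnformcasefull}.

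The routine part is the linearity check; the only genuine obstacle is the case analysis reducing Theorem~\ref{Bnformcasefull} to the clean form $\Psi=\{\alpha\oplus(\tau(\alpha)+H)\}$ in the toroidal setting, plus the careful verification that the HNF structure with prime determinant actually produces a unique distinguished index $\ell$ and unique cosets representatives in $[0,q-1]$. Once these observations are in place the bijection with triples $(q,(b_{\alpha_i}),U)$ is transparent.
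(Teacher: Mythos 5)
Your proposal is correct and follows essentially the same route as the paper: the corollary is obtained there exactly by combining Theorem~\ref{part2g2f4} and Theorem~\ref{Bnformcasefull} (where, since $\Lambda_\ell=\mathbb{Z}^k$, condition \eqref{prfcl} indeed collapses $Z$ to the single coset $H$), then identifying the maximal subgroup $H$ with a matrix $U\in\mathbb{H}_q$ via the Proposition and reading off the cyclic quotient $\mathbb{Z}^k/H=\langle e_\ell+H\rangle$ with representatives $b e_\ell$, $b\in[0,q-1]$, to encode $\tau$ by the tuple $(b_{\alpha_i})$. Your HNF analysis (unique diagonal pivot $q$ at position $\ell$, $u_\ell=qe_\ell$, $H\cap\mathbb{Z}e_\ell=\mathbb{Z}qe_\ell$) matches what the paper leaves implicit, so nothing is missing.
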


\begin{example}Let $k=2$. Then all matrices in $\mathbb{H}_q$ are given by
$$
\begin{pmatrix}
1 & x \\
0 & q  \\
\end{pmatrix}, \  0\leq x\leq q-1,\ 
\begin{pmatrix}
q & 0 \\
0 & 1  \\
\end{pmatrix}.
$$ 
So if $\Psi$ is a maximal closed subroot system with full gradient it must be of the form
$$\Psi_{q,(b_\a),x}=\left\{\alpha\oplus \begin{pmatrix}
0 \\
b_\a \\
\end{pmatrix}+\bz \begin{pmatrix}
1 \\
x \\
\end{pmatrix}+ \bz \begin{pmatrix}
0 \\
q \\
\end{pmatrix}: \alpha\in \mathring{\Phi}\right\}$$
or 
$$\Psi_{q, (b_\a)}=\left\{\alpha\oplus \begin{pmatrix}
b_\a \\
0 \\
\end{pmatrix}+\bz \begin{pmatrix}
q \\
0 \\
\end{pmatrix}+ \bz \begin{pmatrix}
0 \\
1 \\
\end{pmatrix}: \alpha\in \mathring{\Phi}\right\}.$$
\end{example}
\subsection{} We will now discuss Saito's affine reflection system of nullity $2$ (see \cite[Page 117]{Sai85}). Let $\mathring{\Phi}$ be of type $C_n$ with fundamental system $\{\a_1,\dots,\a_n\}$ and recall that the set of short roots spans a root system of type $D_n$ with fundamental system $\{\epsilon_i-\epsilon_{i+1},\epsilon_{n-1}+\epsilon_n,\ 1\leq i\leq n-1\}$.  We consider Saito's affine reflection system
$$\Phi=\{\a\oplus(\mathbb{Z}\oplus \mathbb{Z}): \alpha\in \mathring{\Phi}_s\}\cup \{\a\oplus(\mathbb{Z}\oplus 2\mathbb{Z}): \alpha\in \mathring{\Phi}_\ell\}.$$ Recall the maximal subgroups of $\bz^2$ and the definition of $b_\a's$ from the previous subsection. By Theorems~\ref{Cng2case},~\ref{part2g2f4} and ~\ref{propgrm}, we have that $\Psi$ is a maximal closed subroot system of $\Phi$ if and only if one of the following cases hold:
\begin{enumerate}
 \item There is a $\bz$--linear function $\tau: \mathring{\Phi}_s\rightarrow \left\{\begin{pmatrix}
0 \\
0 \\
\end{pmatrix}, \begin{pmatrix}
0 \\
1 \\
\end{pmatrix}\right\}$ such that $\tau_{\epsilon_{n-1}-\epsilon_n}\neq \tau_{\epsilon_{n-1}+\epsilon_n}$ and $$\Psi=\{\a \oplus (\tau_\a+\mathbb{Z}\oplus 2\mathbb{Z}):\a\in \mathring{\Phi}_s\}.$$
\item There is a prime number $q$ and an arbitrary tuple of integers $(b_{\a_i})_{1\leq i\leq n}\in [0,q-1]^n$ such that 
\begin{align*}\Psi=\bigg\{\alpha\oplus  \begin{pmatrix}
b_\a \\
0 \\
\end{pmatrix}+\bz\begin{pmatrix}
q \\
0 \\
\end{pmatrix}& + \bz \begin{pmatrix}
0 \\
1\\
\end{pmatrix}: \alpha\in \mathring{\Phi}_s\bigg\}&\\&\cup \bigg\{\alpha\oplus \begin{pmatrix}
b_\a \\
0 \\
\end{pmatrix}+\bz\begin{pmatrix}
q \\
0 \\
\end{pmatrix}+ 2\bz \begin{pmatrix}
0 \\
1 \\
\end{pmatrix}: \alpha\in \mathring{\Phi}_\ell\bigg\}.\end{align*}
\item  There is a prime number $q$ and $x\in\{0,\dots,q-1\}$ and a tuple of integers $(b_{\a_i})_{1\leq i\leq n}\in [0,q-1]^n$ satisfying the property that $b_{\alpha_n}$ is even in the case when $q=2$ and $x=0$ (otherwise arbitrary) such that
 \begin{align*}\hspace{0,8cm}\Psi=\bigg\{\alpha\oplus \begin{pmatrix}
0\\
b_\a  \\
\end{pmatrix}+\bz\begin{pmatrix}
1 \\
x \\
\end{pmatrix}& + \bz \begin{pmatrix}
0 \\
q \\
\end{pmatrix}: \alpha\in \mathring{\Phi}_s\bigg\}&\\&\cup \bigg\{\alpha\oplus \begin{pmatrix}
0 \\
b_\a  \\
\end{pmatrix}+c\begin{pmatrix}
1 \\
x \\
\end{pmatrix}+ d \begin{pmatrix}
0 \\
q \\
\end{pmatrix}: \alpha\in \mathring{\Phi}_\ell,\ (c,d)\in Z\bigg\},\end{align*}
where $Z=\{(c,d)\in \bz^2: cx+dq\equiv b_{\alpha_n}\mod 2\}$.
 \item There is a maximal closed subroot system $\mathring{\Phi}'$ of $\mathring{\Phi}$ such that $$\Psi=\{\a \oplus (\mathbb{Z}\oplus \mathbb{Z}):\a\in \mathring{\Phi}'\cap\mathring{\Phi}_s\}\cup \{\a \oplus (\mathbb{Z}\oplus 2\mathbb{Z}):\a\in \mathring{\Phi}'\cap\mathring{\Phi}_\ell\}.$$
\end{enumerate}


\bibliographystyle{plain}
\bibliography{bibfile}

\end{document}